\numberwithin{equation}{section}
\theoremstyle{plain}
\newtheorem{theorem}{Theorem}[section] 
\newtheorem{lemma}[theorem]{Lemma} 
\newtheorem{proposition}[theorem]{Proposition} 
\newtheorem{proposition-definition}[theorem]{Proposition-Definition}
\newtheorem{conjecture}[theorem]{Conjecture} 
\theoremstyle{definition}
\newtheorem{definition}[theorem]{Definition}
\theoremstyle{remark}
\newtheorem{remark}[theorem]{Remark}
\renewcommand{\abs}[1]{\lvert #1 \rvert} 
\renewcommand{\norm}[1]{\lvert #1 \rvert} 
\newcommand{\floor}[1]{\lfloor #1 \rfloor}
\newcommand{\belongs}{\subseteq}
\newcommand{\eps}{\epsilon}
\newcommand{\EE}{\mathbb{E}}
\newcommand{\defeq}{\colonequals}
\newcommand{\map}{\operatorname}
\newcommand{\mscr}{\mathscr}
\newcommand{\mcal}{\mathcal}
\newcommand{\set}[1]{\{#1\}}
\newcommand{\ts}{\tau}
\newcommand{\NN}{\mathbb{N}}
\newcommand{\ZZ}{\mathbb{Z}}
\newcommand{\QQ}{\mathbb{Q}}
\newcommand{\CC}{\mathbb{C}}
\newcommand{\FF}{\mathbb{F}}
\newcommand{\PP}{\mathbb{P}}
\DeclareMathOperator{\supp}{supp}
\DeclareMathOperator{\vol}{vol}
\DeclareMathOperator{\cha}{char}
\def\OK{\mathcal{O}}
\def\Omon{\OK^+}
\def\RR{\mathbb{R}}
\def\TT{\mathbb{T}}
\def\Sing{\mathfrak{S}}
\newcommand{\RcG}{\mathcal{R}_{\bm{c}}^{\map{G}}} % good moduli
\newcommand{\RcB}{\mathcal{R}_{\bm{c}}^{\map{B}}} % bad moduli
\renewcommand{\hat}{\widehat}
\newcommand{\cc}{{\bm{c}}}
\DeclareMathOperator{\Char}{char}
\renewcommand{\leq}{\leqslant}
\renewcommand{\le}{\leqslant}
\renewcommand{\geq}{\geqslant}
\renewcommand{\ge}{\geqslant}
\renewcommand{\eps}{\varepsilon}
\title{Optimal sums of three cubes in $\mathbb{F}_q[t]$}
\author{Tim Browning, Jakob Glas, Victor Y. Wang}
\address{IST Austria\\
Am Campus 1\\
3400 Klosterneuburg\\
Austria}
\email{tdb@ist.ac.at, jakob.glas@ist.ac.at, victor.wang@ist.ac.at}
\subjclass[2010]{11D45 (11D25, 11G40, 11M50, 11P55, 11T55)}
\date{}
\begin{document}

\begin{abstract}
We use the circle method to prove that a density $1$ of elements in $\FF_q[t]$ are representable as a sum of three cubes
of essentially minimal degree
from $\FF_q[t]$, assuming
the Ratios Conjecture and 
that $\Char(\FF_q)>3$.
Roughly speaking, to do so, we 
upgrade an order of magnitude result to a full asymptotic formula that was conjectured by Hooley in the number field setting. 
\end{abstract}

\maketitle

\thispagestyle{empty}
 \setcounter{tocdepth}{1}
 {\small
 \begin{multicols}{2}
 \tableofcontents
 \end{multicols}
 }

\section{Introduction}

As in our previous paper \cite{BGW2024positive},
we are interested in solving the equations $x^3+y^3+z^3=k$ in $\FF_q[t]$ as efficiently as possible, for given $k\in \FF_q[t]$.
As observed by Serre and Vaserstein \cite{vaserstein1991sums}, this Diophantine equation is always soluble when 
when $\cha(\FF_q)\ne 3$ and $q\notin \{2,4,16\}$, 
but the degrees of $x,y,z\in \OK$ solving $x^3+y^3+z^3 = k$
are at least $\deg{k}$, whereas one might hope for solutions of smaller degree. 
Let $\OK=\FF_q[t]$.
For each $A\in \RR$, we define the set
\begin{equation}\label{def:SA}
S_A=\left\{k\in \OK: 
\begin{array}{l}
x^3+y^3+z^3=k\textnormal{ is soluble in $\OK$ with } \\
\max\{\deg{x},\deg{y},\deg{z}\} \le \tfrac{\deg{k}}{3} + A
\end{array}
\right\}.
\end{equation}
In \cite{BGW2024positive}, we proved that $S_A$ has positive lower density for $A\ge 0$,
assuming (R2) from the Ratios Conjecture \cite{BGW2024positive}*{Conjecture~3.6}.
At the expense of assuming a further instance of the Ratios Conjecture,
our main result is as follows.

\begin{theorem}\label{THM:main-intro}
Assume (R2) and the Ratios Conjecture~\ref{CNJ:RA1},
and suppose $\cha(\FF_q) > 3$.
Then the lower density of $S_A$ approaches $1$ as $A\to \infty$.
\end{theorem}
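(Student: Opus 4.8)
\emph{Proof strategy.} For monic $k\in\OK$ of degree $n$ set
\[
N_A(k)\defeq\#\bigl\{(x,y,z)\in\OK^3:\max(\deg x,\deg y,\deg z)\le\floor{n/3+A},\ x^3+y^3+z^3=k\bigr\},
\]
so that $k\in S_A$ exactly when $N_A(k)\ge1$, and it suffices to prove: for every $\ve>0$ there is $A_0$ such that for all $A\ge A_0$ and all large $n$, $N_A(k)\ge1$ for all but at most $\ve q^n$ of the $q^n$ monic $k$ of degree $n$. We treat $N_A(k)$ by the circle method over $\FF_q((1/t))$, writing $N_A(k)=\int_{\TT}S(\alpha)^3e(-\alpha k)\,d\alpha$ with $S(\alpha)=\sum_{\deg x\le\floor{n/3+A}}e(\alpha x^3)$, where $e(\cdot)$ is the standard additive character and $\TT$ the circle. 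Splitting $S=\sum_{D'}S_{D'}$ by the exact degree $D'$ of the summation variable, a triple with $\max\deg=D'$ can contribute only if $D'\ge n/3$, so $N_A(k)$ is built from the $\asymp A$ ``shells'' $D'\in[n/3,\floor{n/3+A}]$, each carrying a Hardy--Littlewood contribution of size $\asymp\mathfrak{S}(k)$, where $\mathfrak{S}(k)=\prod_{\mathfrak p}\sigma_{\mathfrak p}(k)$ is the (conditionally convergent) singular series whose Euler factors count points on the diagonal cubic surfaces $x^3+y^3+z^3=k$ over residue fields. On top of this, the true main term $\mathrm{MT}_A(k)\asymp A\,\mathfrak{S}(k)$ carries secondary pieces coming from the degenerate configurations in which two of $x,y,z$ nearly cancel (so the cube-sum drops in degree); these are the function-field avatars of the extra terms in Hooley's conjectured asymptotic, and extracting them \emph{exactly} from the circle-method integral --- rather than only up to order of magnitude as in \cite{BGW2024positive} --- is the ``upgrade to a full asymptotic formula'' advertised in the abstract. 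Write $N_A(k)=\mathrm{MT}_A(k)+E_A(k)$.

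The argument then reduces to two statements. First, \emph{an asymptotic with power saving}: $\sum_{\deg k=n}|E_A(k)|^2\ll_A q^{n-\delta}$ for some $\delta>0$, from which Chebyshev gives $|E_A(k)|<1$ for all but $O_A(q^{n-\delta})$ of the $k$. Second, \emph{the main term is usually at least $1$}: since $\Char(\FF_q)>3$ there are no local obstructions, so $\sigma_{\mathfrak p}(k)>0$ for all $\mathfrak p$ and $\mathfrak{S}(k)>0$ for all $k$; as $\mathrm{MT}_A(k)\gg A\,\mathfrak{S}(k)$, the event $\mathrm{MT}_A(k)\le1$ forces $\mathfrak{S}(k)\ll1/A$, and the density of $k$ with such an abnormally small $\mathfrak{S}(k)$ tends to $0$ as $A\to\infty$ --- a distributional fact about $\mathfrak{S}(k)$, which is essentially a value at $s=1$ of an $L$-function (a product of Dirichlet $L$-functions over $\FF_q[t]$) attached to the surface $x^3+y^3+z^3=k$. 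Granting both, for $A$ and $n$ large one has $N_A(k)=\mathrm{MT}_A(k)+E_A(k)>0$ whenever $\mathrm{MT}_A(k)>1$ and $|E_A(k)|<1$, which excludes only $(\eta(A)+o(1))q^n$ of the $k$ with $\eta(A)\to0$; this gives the theorem.

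Both statements are powered by the same engine, applied after squaring: $\sum_{\deg k=n}N_A(k)^2$ counts solutions of the six-variable cubic $x_1^3+x_2^3+x_3^3=y_1^3+y_2^3+y_3^3$ in the box with cube-sum of degree $n$, which --- unlike the three-variable problem --- is within reach of the circle method, sitting at the convexity barrier and handled by the sixth-moment technology for cubic Weyl sums (Weyl differencing being available because $\Char(\FF_q)>3$). One must (a) evaluate this six-variable count with a power-saving error, its main term being forced to equal $\sum_{D_1',D_2'}\sum_{\deg k=n}\mathrm{MT}_{D_1'}(k)\,\mathrm{MT}_{D_2'}(k)$ --- diagonal, generic, and degenerate contributions all organised correctly --- which is where the relevant averages of $\mathfrak{S}$ over the shells (including second-moment averages $\sum_{\deg k=n}\mathfrak{S}(k)^2$) are computed using (R2) and the new Ratios Conjecture~\ref{CNJ:RA1}; and (b) deduce from this the power-saving bound for $\sum_{\deg k=n}|E_A(k)|^2$, and separately that the abnormally-small-$\mathfrak{S}(k)$ set has density $\to 0$, again via the $L$-function input in (R2) and \ref{CNJ:RA1}.

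The main obstacle is this six-variable asymptotic at the convexity barrier: isolating, and evaluating exactly, the secondary (Hooley) main terms hidden in what naively looks like the minor arcs, while simultaneously proving a genuinely power-saving minor-arc sixth-moment bound for the cubic Weyl sum, so that the leftover error is $o$, not merely $O$, of the main term. Threading the Ratios Conjectures through both the singular-series averages and the minor-arc estimate in a compatible way, and keeping all dependence on $A$ explicit throughout, is what makes the two densities converge to $1$ as $A\to\infty$.
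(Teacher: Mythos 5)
Your broad outline --- treat $N_A(k)$ by the circle method, compare to a local main term, run a variance analysis over $k$ of fixed degree, show the main term is typically $\ge 1$, and close with Chebyshev --- is the right strategy and is essentially what the paper does. However, there is a genuine gap in the central quantitative claim.

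You assert a power-saving variance bound $\sum_{\deg k=n}\lvert E_A(k)\rvert^2\ll_A q^{n-\delta}$. This cannot hold. The sum $\sum_{\deg k=n}N_A(k)^2$ counts pairs of triples $(x_1,x_2,x_3),(x_4,x_5,x_6)$ with equal cube-sums, and the \emph{diagonal} pairs --- those with $(x_4,x_5,x_6)$ a signed permutation of $(x_1,x_2,x_3)$, equivalently the integral points lying on the $\FF_q$-rational linear $3$-planes contained in $\{\sum x_i^3=0\}$ --- contribute on the order of $\sum_{\deg k=n}N_A(k)\asymp A q^n$. None of this is removed by your proposed main term $\mathrm{MT}_A(k)\asymp A\mathfrak S(k)$: the cross-term $\sum N_A\cdot\mathrm{MT}_A$ and $\sum\mathrm{MT}_A^2$ are both $\asymp A^2 q^n$ smooth main terms, and they cancel each other but leave the diagonal $\gg Aq^n$ standing. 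Thus $\sum\lvert E_A(k)\rvert^2\gg A q^n$ for fixed $A$, and Chebyshev gives a failure set of relative density only $\ll A^{-2\delta}$ (after dividing by a main-term lower bound of size $A^{1/2+\delta}$), \emph{not} a power-saving set. This is not a cosmetic loss: it is exactly why the conclusion of the theorem is ``density $\to 1$ as $A\to\infty$'' and cannot be strengthened to density $1$ for fixed $A$ (which Theorem~\ref{THM:fixed-A} shows is false). The Ratios Conjecture input (RA1) is used only to push the dual-variety/minor-arc piece $E_1(P)$ from $O(|P|^3)$ to $o(|P|^3)$, so that the variance is $\ll A|P|^3 + o_{w,A}(|P|^3)$; it does not, and cannot, make the variance power-saving.

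Two further points where the paper diverges from your sketch. First, the naive count with $\max(\deg x,\deg y,\deg z)\le\lfloor n/3+A\rfloor$ is not used: the paper instead counts with the balanced constraints $\deg x=\deg y$ and $\deg y-1\le\deg z\le\deg y$ (Definition~\ref{DEFN:A-skew-weights}), precisely so that the linear-subspace contribution is only $\ll A|P|^3$ (Lemma~\ref{Le: NumberPtsLinSpace}); with an unbalanced box the linear spaces can contribute $q^{O(A)}|P|^3$, which ruins the Chebyshev arithmetic. Second, the paper does not need any delicate distributional input about values of $L(1)$ to control small $\mathfrak S(k)$: Proposition~\ref{Prop: LocDenAvg} gives the elementary first-moment bound $|N|^{-1}\sum_k 1/\tilde\rho(N,k)\ll 1$, so a Chebyshev step already confines $\tilde\rho(N,k)<A^{-1/2+\delta}$ to a set of relative density $\ll A^{-1/2+\delta}$; combined with $\sigma_{\infty,A}(k)\gg A$ uniformly (Proposition~\ref{Prop: RealDensitylarge}), the main term is $\gg A^{1/2+\delta}$ outside this small set, and the final optimization yields failure density $\ll A^{-1/3}$.
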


Conjecture~\ref{CNJ:RA1}~(RA1) concerns mean values of $1/L(s,\cc)$ over adelic boxes of vectors $\cc$.
We defer the details to \S~\ref{SEC:new-ratios-stuff}.
In Theorem~\ref{THM:main-intro}, one could simply assume a common generalisation of (RA1) and (R2), but this would obfuscate the paper.

Taking $A\to \infty$ is necessary for the conclusion of Theorem~\ref{THM:main-intro} to hold.
By adapting local density arguments of Diaconu \cite{diaconu2019admissible}*{\S~1} from $\ZZ$ to $\FF_q[t]$, we will prove the following unconditional result.

\begin{theorem}\label{THM:fixed-A}
Suppose $\cha(\FF_q) \ne 3$ and
fix $A\in \RR$.
Then $S_A$
has upper density $<1$.
\end{theorem}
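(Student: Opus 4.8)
The plan is to show that for any fixed $A$, there is a positive-density set of $k\in\OK$ that cannot be written as $x^3+y^3+z^3$ with all of $x,y,z$ of degree at most $\tfrac{\deg k}{3}+A$, by exhibiting a local obstruction at finitely many places. Following Diaconu's argument over $\ZZ$, the idea is that solubility of $x^3+y^3+z^3=k$ with $\max\{\deg x,\deg y,\deg z\}\le \tfrac{\deg k}{3}+A$ forces, for a suitable prime $\pi\in\OK$ (or a finite set of primes), that $k$ lies in a proper subset of the residue classes mod $\pi^{m}$ for an appropriate exponent $m$ depending on $A$; since the complementary classes have positive density, so does the set of $k\notin S_A$. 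Concretely, I would first reduce to $k$ of degree $\equiv 0 \pmod 3$ (a positive-density subfamily), say $\deg k = 3n$, so the degree constraint reads $\deg x,\deg y,\deg z\le n+A$.

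\textbf{Step 1: Choice of the obstructing place.} Pick a prime $\pi$ with $\pi\equiv 1\pmod{3}$ in the sense that the cubing map on $(\OK/\pi)^\times$ is not surjective (such $\pi$ exist since $\cha(\FF_q)\ne 3$; when $3\mid q-1$ one can even take $\pi$ linear, and in general one passes to a residue field $\FF_{q^d}$ with $3\mid q^d-1$ and takes $\pi$ of degree $d$). Then fix an integer $\ell\ge 1$, to be chosen in terms of $A$, and work modulo $\pi^{3\ell}$. The set of cubes in $\OK/\pi^{3\ell}$ that are represented by a value $x^3$ with $x$ ranging over all of $\OK$ is governed by the usual Hensel/ramification analysis: writing $v_\pi$ for the valuation, every $x$ is $\pi^{v}u$ with $u$ a unit, so $x^3$ has valuation a multiple of $3$ and its unit part lies in $((\OK/\pi)^\times)^3$-cosets; this restricts $x^3+y^3+z^3 \bmod \pi^{3\ell}$ to a proper subset $T_\ell\subsetneq \OK/\pi^{3\ell}$ of positive but bounded-away-from-$1$ density, by a three-fold convolution computation exactly as in \cite{diaconu2019admissible}*{\S~1}.

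\textbf{Step 2: From the degree bound to a genuine congruence restriction.} The subtlety is that the congruence $x^3+y^3+z^3\equiv k\pmod{\pi^{3\ell}}$ only obstructs $k$ when the solutions $x,y,z$ are forced to be $\pi$-adically "small" relative to $k$, which is where the degree cap $n+A$ enters. If $\deg k=3n$ is large compared to $\ell\deg\pi$ and $A$, one cannot directly conclude $v_\pi(x),v_\pi(y),v_\pi(z)$ are small. Instead I would run the argument archimedean-adically as Diaconu does: after scaling, the box $\max\deg\le n+A$ of real size $q^{n+A}$ in each variable, compared with the "expected" size $q^n$ coming from $k$ of degree $3n$, means the solution vector $(x,y,z)$ is confined to a ball of radius $q^{A}$ times the natural scale; expanding $x^3+y^3+z^3$ and reducing modulo a high power of $\pi$ chosen with $\ell\deg\pi \approx$ (a function growing with $A$), one shows the leading behaviour of $k$ in a $\pi$-adic expansion around its "main term" must again lie in a proper positive-codensity subset. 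This is the step I expect to be the main obstacle: making precise the interaction between the archimedean degree constraint and the non-archimedean congruence, i.e. transporting Diaconu's real-variable counting of lattice points missing a congruence class to the $\FF_q[t]$ setting where "size" is $q^{\deg}$ and one must track both the degree truncation and the $\pi$-adic truncation simultaneously.

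\textbf{Step 3: Density bookkeeping.} Granting Step 2, the set of $k$ of degree $3n$ lying in the bad (complementary) classes has density bounded below by a constant $\delta=\delta(\pi,\ell)>0$ uniformly in $n$; summing over $n$ (and recalling we restricted to $3\mid\deg k$, itself density $1/3$ among all degrees in the relevant counting) gives $\overline{d}(S_A)\le 1-\delta/3<1$. One should double-check uniformity in $n$ of the error terms in the lattice-point/Hensel count — these are the routine calculations I will not grind through here — and verify that the construction of $\pi$ and $\ell$ can be made for every fixed $A$, with $\ell$ increasing as $A$ does (so $\delta$ shrinks with $A$, consistent with Theorem~\ref{THM:main-intro} allowing density $\to 1$ as $A\to\infty$). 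Finally, the hypothesis $\cha(\FF_q)\ne3$ is used precisely to ensure the cubing map is non-surjective on some residue field and that $x\mapsto x^3$ is separable, so the Hensel lemma step in Step 1 goes through without pathology.
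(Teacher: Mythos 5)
Your high-level instinct---to impose a condition at a place where the cubic residue symbol is nontrivial---matches the paper's opening move, but the mechanism you rely on in Step~1 does not exist. Because $\Char(\FF_q)\neq 3$, the equation $x^3+y^3+z^3\equiv k\bmod\varpi$ always has a nonsingular solution for $|\varpi|$ large enough: the Jacobi-sum formula (as in \cite{IR}) gives $\#\{(x,y,z)\in\FF_{|\varpi|}^3:x^3+y^3+z^3=k\}=|\varpi|^2+O(|\varpi|)$ for every residue class $k$, and nonsingularity plus Hensel's lemma then lifts this to every modulus $\varpi^{3\ell}$. Consequently the map $(x,y,z)\mapsto x^3+y^3+z^3$ is surjective onto $\OK/\varpi^{3\ell}\OK$ for all but a bounded list of tiny residue fields such as $\FF_4$, and there is no proper subset $T_\ell$ of missed residues; the ``three-fold convolution'' you invoke does not produce one. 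Your Step~2 acknowledges that something is needed to couple the degree cap to the $\pi$-adic data, but the obstruction you want to locate is absent, so this route cannot be completed for general $q$.

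The paper's proof is not a congruence-obstruction argument but a first-moment counting argument, and this is the missing idea. Restricting to multiples of a fixed $r\in\Omon$ and bounding the number of triples $(x,y,z)$ with $\deg x,\deg y,\deg z\le \tfrac{T}{3}+A$ and $r\mid x^3+y^3+z^3$ gives that the density of $S_A$ among multiples of $r$ is at most $C_A\,\rho(r)/|r|^2$, where $\rho(r)=\#\{(u,v,w)\bmod r:u^3+v^3+w^3\equiv 0\}$. This local density is never zero (indeed $\rho(\varpi)/|\varpi|^2=1+c_\varpi/|\varpi|-c_\varpi/|\varpi|^2$ with $|c_\varpi|\le 2\sqrt{|\varpi|}$), so the task is to make it \emph{small}, not to make it vanish. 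The paper does this by identifying, via the Hasse--Davenport relation and Lemma~\ref{lem:infinitely-d}, infinitely many primes $\varpi$ of even degree $2d$ with $c_\varpi<0$ bounded away from zero, and then multiplying together $\gg q^{2d}/d$ of them so that $\rho(r_d)/|r_d|^2$ decays exponentially and beats $C_A^{-1}$. The degree constraint enters only through the bound $S(n)\ll_A\hat T^{1/3}/|r|$ on the number of residues in a box, not through any $\pi$-adic truncation of $x,y,z$; this is the interaction your Step~2 was missing.
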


The proof of Theorem~\ref{THM:main-intro} builds heavily on our work in \cite{BGW2024positive},
using the full force of the function field circle method. This will allow us to prove the following asymptotic formula, in the spirit of the Manin--Peyre conjecture. 
\begin{theorem}\label{Thm: Manin}
Suppose $\cha(\FF_q) > 3$.
    Let $w\colon K_\infty^6\to \RR$ be the weight function defined in Definition~\ref{DEFN:A-skew-weights} and let $F(\bm{x})=x_1^3+\cdots +x_6^3$. Assuming (R2) and Conjecture~\ref{CNJ:RA1}, for $P=t^d$ we have 
    \[
    \sum_{\substack{\bm{x}\in\OK^6\\ F(\bm{x})=0}}w(\bm{x}/P)=\sigma_\infty\Sing|P|^3 + \sum_{L\in \Upsilon}\sum_{\bm{x}\in \OK^6\cap L}w(\bm{x}/P)+o_w(|P|^3)
    \]
    as $|P|\to\infty$, where $\sigma_\infty$ and $\Sing$ are the singular integral and singular series defined in \eqref{Eq: singInt} and \eqref{Eq: SingSer}, respectively. Moreover,  $\Upsilon$ denotes the set of 3-dimensional $\FF_q(t)$-vector spaces defined over $\FF_q$ on which $F$ vanishes identically.
\end{theorem}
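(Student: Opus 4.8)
The plan is to set up the Hardy--Littlewood circle method for the cubic form $F(\bm{x})=x_1^3+\cdots+x_6^3$ with the smooth weight $w$, exactly along the lines of \cite{BGW2024positive}, and then identify the new main terms. Writing $\psi$ for the additive character on $K_\infty$ and $e(\cdot)$ for the relevant exponential, I would express
\[
\sum_{\substack{\bm{x}\in\OK^6\\ F(\bm{x})=0}} w(\bm{x}/P)=\int_{\TT} S(\alpha)\,d\alpha,\qquad S(\alpha)=\sum_{\bm{x}\in\OK^6} w(\bm{x}/P)\,\psi(\alpha F(\bm{x})),
\]
and perform a Farey/Dirichlet dissection of the circle $\TT$ into major arcs $\mathfrak{M}$ around rationals $a/r$ of small denominator degree and a complementary minor-arc set $\mathfrak{m}$. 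The first step is the minor-arc bound: show $\int_{\mathfrak{m}}|S(\alpha)|\,d\alpha=o(|P|^3)$. Since we have six cubes, this should follow from the Weyl-type / Poisson-summation estimates already developed in \cite{BGW2024positive} for this very form, possibly combined with a Hua-type $L^2$ average; no new input beyond what is cited should be required here, though one must be careful that the weight $w$ of Definition~\ref{DEFN:A-skew-weights} is admissible for those estimates (compactly supported, smooth, with the right support geometry).

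The second step is the major-arc analysis, which is where the bulk of the work and the new conjectural input enters. On $\mathfrak{M}$ one approximates $S(\alpha)$ near $a/r$ by a product of a complete exponential sum $S_{a,r}$ over $(\OK/r)^6$ and an archimedean integral $I(\cdot)$ coming from $w$; summing over $a$ and $r$ and completing the resulting series produces the expected main term $\sigma_\infty\Sing|P|^3$, with $\Sing=\prod_{\varpi}\sigma_\varpi$ the product of local densities as in \eqref{Eq: SingSer} and $\sigma_\infty$ the singular integral \eqref{Eq: singInt}. The crucial point is that for six cubes the singular series does \emph{not} converge absolutely in the naive range, so — as in Hooley's treatment of sums of three cubes — the major-arc contribution must be reorganised: one isolates the "diagonal"/degenerate solutions lying on the $3$-dimensional linear subspaces $L\in\Upsilon$ on which $F$ vanishes (for instance $x_1=-x_4$, $x_2=-x_5$, $x_3=-x_6$ and its $\FF_q$-rational analogues), which contribute $\sum_{L\in\Upsilon}\sum_{\bm{x}\in\OK^6\cap L} w(\bm{x}/P)$, a quantity of exact order $|P|^3$, and shows the remaining off-subspace major-arc contribution is $\sigma_\infty\Sing|P|^3+o(|P|^3)$. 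Managing the tail of the singular series and the associated $r$-sum — proving the relevant Dirichlet series of $r \mapsto S_{a,r}$-type weights extends and has controlled mean value — is precisely what forces the hypothesis (RA1) of Conjecture~\ref{CNJ:RA1}, together with (R2); these feed in exactly as in \cite{BGW2024positive} but now must deliver a true asymptotic rather than an order-of-magnitude bound.

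The main obstacle, therefore, is the delicate cancellation in the major arcs: cleanly separating the subspace contribution $\sum_{L\in\Upsilon}$ from the "generic" part, and then evaluating the generic part as an honest asymptotic $\sigma_\infty\Sing|P|^3$ rather than merely $O(|P|^3)$. Concretely, this means (i) establishing absolute-or-conditional convergence of $\Sing$ and showing it equals the completed product, which requires a uniform handling of $\sigma_\varpi$ including the primes $\varpi\mid 3$ — here $\Char(\FF_q)>3$ is used to control the cubic Gauss sums and keep $\sigma_\varpi=1+O(|\varpi|^{-3/2})$; (ii) showing that the contribution of the minor-arc-adjacent part of $\mathfrak{M}$, and of large denominators $r$, is genuinely negligible, which is where the mean-value bound for $1/L(s,\cc)$ over adelic boxes (RA1) is indispensable, since the Poisson-summation dual sum over $\cc$ must be shown to concentrate on $\cc=\bm0$ up to $o(|P|^3)$ error. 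Once these are in place, the three pieces — minor arcs ($o(|P|^3)$), subspace major-arc term ($\sum_{L\in\Upsilon}$), and generic major-arc term ($\sigma_\infty\Sing|P|^3+o(|P|^3)$) — assemble to the claimed formula; I would then remark that, by the argument of Theorem~\ref{THM:fixed-A} and a computation of $\sigma_\infty$ and $\Sing$, the generic term dominates on a density-$1$ set of $k$, which is how Theorem~\ref{Thm: Manin} is leveraged towards Theorem~\ref{THM:main-intro}.
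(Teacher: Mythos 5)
Your proposal has a genuine gap, and it also misidentifies where the conjectural input enters. The critical error is the claim that the minor-arc estimate $\int_{\mathfrak{m}}|S(\alpha)|\,d\alpha=o(|P|^3)$ ``should follow from Weyl-type \ldots estimates, possibly combined with a Hua-type $L^2$ average.'' Six cubes is precisely the critical case where this fails: writing $S(\alpha)=T(\alpha)^2$ with $T$ the three-variable sum weighted by $\nu$, the Parseval/Hua mean value gives $\int_\TT|T(\alpha)|^2\,d\alpha\asymp |P|^3$, so the best one gets for the minor arcs is $O(|P|^3)$ rather than $o(|P|^3)$, and pointwise Weyl bounds on $T$ are far too weak to save a further power. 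This is exactly the barrier the Ratios Conjecture is introduced to overcome, as the introduction of the paper points out (``upgrade a certain $O(|P|^3)$ bound to $o(|P|^3)$''). Your proposal instead places (RA1) in the major-arc tail of the singular series, but $\Sing$ in fact converges unconditionally (by \cite{BGW2024positive}*{Lemma~9.2}, with tail $\ll \widehat{Q}^{-2/3+\varepsilon}$); no conjectural input is needed there.

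The paper's actual route avoids a major/minor arc split entirely. After the exact Farey dissection and Poisson summation (\cite{BGW2024positive}*{Eq.~(2.9)}) one has $N_w(P)=|P|^n\sum_{|r|\le\widehat{Q}}|r|^{-n}\sum_{\bm{c}}S_r(\bm{c})I_r(\bm{c})$, and the decomposition $N_w(P)=M(P)+E_1(P)+E_2(P)$ in \eqref{eq:stein} is by the value of $F^\ast(\bm{c})$: $M(P)$ is $\bm{c}=\bm{0}$ (Proposition~\ref{Prop: Centercontri}, giving $\sigma_\infty\Sing|P|^3$ plus an unconditional error); $E_2(P)$ is $\bm{c}\ne\bm{0}$ with $F^\ast(\bm{c})=0$ (Proposition~\ref{Prop: LinSpacecontri}, which produces the subspace sum $\sum_{L\in\Upsilon}$ unconditionally); and $E_1(P)$ is $F^\ast(\bm{c})\ne 0$ (Theorem~\ref{Thm: E1Contri}), which is the only place where (R2) and (RA1) enter, via Proposition~\ref{PROP:RA1'E'} applied to averages of $S_r^\natural(\bm{c})$ over adelic boxes in $\bm{c}$ after fixing the integral $I_r(\bm{c})$ by local constancy (Lemma~\ref{LEM:integral-scale-invariance}). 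So: the subspace term is not a ``reorganisation'' of the major arcs but the $F^\ast(\bm{c})=0$ slice of the Poisson-dual sum, and (RA1) is not about the singular series but about extra cancellation in $\bm{c}$ when $F^\ast(\bm{c})\ne 0$. As written, your proposal would stall at the minor-arc step and would need to be restructured along these lines.
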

In some key ranges,
we rely on Conjecture~\ref{CNJ:RA1} to improve a certain $O(\abs{P}^3)$ bound to $o(\abs{P}^3)$, 
in the argument of \cite{BGW2024positive}.
Moreover, the scaling-invariant weight functions required for our counting argument
are subtler than those in \cite{BGW2024positive}, which already required care.
The precise weights we use are specified in Definition~\ref{DEFN:A-skew-weights}.

\subsection*{Acknowledgements}

We thank Alexandra Florea for discussions on cubic Gauss sums over function fields.
While working on this paper the first two authors were supported by a FWF grant (DOI 10.55776/P36278)
 and the third  author was supported by the European Union's Horizon 2020 research and innovation programme under the Marie Sk\l{}odowska-Curie Grant Agreement No.~101034413.

\section{Upper density of \texorpdfstring{$S_A$}{SA}}

Suppose $\cha(\FF_q) \ne 3$.
In this section we prove
Theorem  \ref{THM:fixed-A}.
We recall the notation $\hat{B}=q^B$, for any $B\in \RR$, that was adopted in our previous investigation \cite{BGW2024positive}. 
Define
$$
\mathbb{E}_r(T;S)= 
\frac{|r|}{\hat T}\#\left\{k\in S: |k|<\hat T,\; r\mid k\right\},
$$
for any subset $S\subset \OK$,  
any $T>0$ 
and any $r\in \Omon$, where we recall that $\Omon$ denotes the set of monic elements in $\OK$.
For a given choice of $A>0$ we need to prove that 
$$
\limsup_{T\to \infty} \mathbb{E}_1(T;S_A)<1.
$$

To begin with, given
$r\in \Omon$ such that $\deg(r)\leq \frac{T}{3}$, we have 
$$
\mathbb{E}_r(T;S_A)
\leq 
\frac{|r|}{\hat T}
\sum_{\substack{k\in \OK\\
|k|<\hat T\\  r\mid k}} \tilde r_{A}(k),
$$
where 
$$
\tilde r_{A}(k)=
\#\left\{(x,y,z)\in \OK^3: x^3+y^3+z^3=k, ~
\deg{x},\deg{y},\deg{z} \le \tfrac{\deg{k}}{3} + A\right\}.
$$
Breaking into residue classes modulo $r$, we obtain
$$
\sum_{\substack{k\in \OK\\
|k|<\hat T\\  r\mid k}} \tilde r_{A}(k)
\leq \sum_{\substack{
(u,v,w)\in (\OK/r\OK)^3\\  u^3+v^3+w^3\equiv 0 \bmod{r}}} 
S(u)S(v)S(w),
$$
where $S(n)$ is the number
$x\in \OK$ such that $\deg x\leq \tfrac{1}{3}T + A$ and $x\equiv n\bmod{r}$, for any $n \in \OK$. 
Clearly $S(n)\ll_A \hat T^{1/3}/|r|$, where the implied constant is allowed to depend on $A$. But then it follows that  there exists a constant $C_A>0$ depending only on $A$ such that 
$$
\mathbb{E}_r(T;S_A)\leq  C_A\frac{\rho(r)}{|r|^2},
$$
where
$$
\rho(r)=\#\left\{(u,v,w)\in (\OK/r\OK)^3: u^3+v^3+w^3\equiv 0 \bmod{r}\right\}.
$$
It now follows that 
$$
\mathbb{E}_1(T;\OK\setminus S_A)\geq |r|^{-1}
\mathbb{E}_r(T;\OK\setminus S_A)\geq |r|^{-1}\left(1- C_A\frac{\rho(r)}{|r|^2}\right),
$$
whence
$$
\mathbb{E}_1(T;S_A)\leq 1-|r|^{-1}\left(1- C_A\frac{\rho(r)}{|r|^2}\right).
$$
The proof of Theorem \ref{THM:fixed-A}  therefore reduces to finding 
 an element $r\in \Omon$, depending only on $A$,  such that 
 $$
\frac{ \rho(r)}{|r|^2}<C_A^{-1}.
 $$

For any finite field $\FF_m$ of characteristic $p$, 
we introduce the  {\em cubic Gauss sum} 
$$
g(\chi)=\sum_{u\in \FF_m} \chi(u) e_p(T_{\FF_m/\FF_p}(u)),
$$
where $\chi$ is any  non-trivial multiplicative cubic character of $\FF_m$
and where $T_{\FF_m/\FF_p}: \FF_m\to \FF_p$ is the trace map.
This has absolute value $\sqrt{m}$ and we define the normalised cubic Gauss sum
$\widetilde{g}(\chi)=g(\chi)/\sqrt{m}$, over $\FF_m$.
 
 Let $\varpi\in \Omon$ be a prime such that 
 $|\varpi|\equiv 1\bmod{3}$
and let $\FF_\varpi$ be the finite field that is isomorphic to $\OK/\varpi\OK$, with cardinality 
$|\varpi|$. 
It follows from Theorem 2 in \cite{IR}*{\S~10.3} that 
$$
\frac{\rho(\varpi)-1}{|\varpi|-1}
=|\varpi|+1+2|\varpi|^{-1}\Re g(\chi_\varpi)^3,
$$
where $\chi_\varpi$ is any non-trivial cubic character of $\FF_\varpi$.
But then 
\begin{equation}\label{eq:tap}
\frac{\rho(\varpi)}{|\varpi|^2}=
1+\frac{c_\varpi}{|\varpi|}-\frac{c_\varpi}{|\varpi|^2},
\end{equation}
where $c_\varpi=2|\varpi|^{-1}\Re g(\chi_\varpi)^3\in \ZZ$.  

The 
normalised cubic Gauss sum
$\widetilde{g}(\chi_\varpi)=g(\chi_\varpi)/|\varpi|^{1/2}$ is a complex number with absolute value $1$ and we may write
 $$
 c_\varpi=2|\varpi|^{-1}\Re g(\chi_\varpi)^3=2|\varpi|^{1/2}\Re \widetilde g(\chi_\varpi)^3, 
 $$
 so that $|c_\varpi|\leq 2|\varpi|^{1/2}$.
 We would like to prove that there are infinitely many primes 
 $\varpi\in \Omon$ with 
 $|\varpi|\equiv 1\bmod{3}$ for which $c_\varpi<0$. 
This would follow from a function field version of the result by Heath-Brown and Patterson \cite[Theorem~2]{HBP}, but such a result has yet to be worked out in the literature. Fortunately, since we work with characters over the constant field, an elementary approach is available to us. 

Pick an integer $d\geq 1$.
The  prime number theorem for function fields implies that there are 
 $(2d)^{-1}q^{2d}+O(q^d)$ primes 
 $\varpi\in \Omon$ such that $\deg(\varpi)=2d$.
 Moreover, any such prime will satisfy 
 $|\varpi|\equiv 1\bmod{3}$, since $2d$ is even. 
For any such prime 
$\varpi$, we may assume that our 
cubic character $\chi_\varpi$ on $\FF_\varpi\subset \overline{\FF}_q$ takes the shape $\chi\circ N_{\FF_{\varpi}/\FF_{q^2}}$, where $\chi$ is a fixed cubic character on $\FF_{q^2}\subset \overline{\FF}_q$ and 
$N_{\FF_{\varpi}/\FF_{q^2}}:\FF_\varpi\rightarrow \FF_{q^2}$ is the norm map.
It then follows from the Hasse--Davenport relation that 
$
\widetilde g(\chi_\varpi)=-(-\widetilde g(\chi))^d,
$
where $\widetilde g(\chi)$ is the normalised Gauss sum on $\FF_{q^2}$, 
whence
\begin{equation}\label{eq:wire}
 c_\varpi=-2|\varpi|^{1/2} \Re \left(-\widetilde g(\chi)^3\right)^d. 
  \end{equation}
We proceed to prove the following result.

\begin{lemma}\label{lem:infinitely-d}
There exist infinitely many integers $d\ge 0$ such that
$$
\tfrac{9}{10}\leq \Re \left(-\widetilde g(\chi)^3\right)^d \leq 1.
$$
\end{lemma}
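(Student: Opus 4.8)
The plan is to exploit that $z\defeq -\widetilde g(\chi)^3$ is a complex number of absolute value $1$, so $z = e^{i\theta}$ for some $\theta\in[0,2\pi)$, and we are asking for infinitely many $d\ge 0$ with $\Re(z^d)=\cos(d\theta)$ lying in $[\tfrac{9}{10},1]$. This is a quantitative equidistribution/recurrence statement about the sequence $d\theta \bmod 2\pi$ on the circle. First I would split into two cases according to whether $\theta/2\pi$ is rational or irrational. If $\theta/2\pi = a/b$ in lowest terms, then $z^d$ is periodic in $d$ with period $b$, and in particular $z^0 = 1$ gives $\Re(z^d)=1\in[\tfrac9{10},1]$ for every $d\equiv 0\bmod b$, yielding infinitely many such $d$ for free. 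If $\theta/2\pi$ is irrational, then by Weyl's equidistribution theorem (or just Kronecker's theorem on density of $\{d\theta\bmod 2\pi\}$), the orbit $\{d\theta \bmod 2\pi : d\ge 0\}$ is dense in the circle, hence enters the arc $\{\phi : \cos\phi \ge \tfrac{9}{10}\}$ — a genuine nonempty open arc around $0$ — infinitely often. Either way we obtain infinitely many $d\ge 0$ with $\Re\left(-\widetilde g(\chi)^3\right)^d\in[\tfrac9{10},1]$, and the upper bound $\le 1$ is automatic since $\abs{z^d}=1$.

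The one subtlety worth pinning down — and the step I expect to require the most care — is the rational case when $\theta \ne 0$: one must check that $d=0$ (equivalently $d\equiv 0 \bmod b$) really is admissible, i.e.\ that the problem statement's requirement "$d\ge 0$" genuinely includes $0$ rather than $d\ge 1$; the lemma as stated says $d\ge 0$, so this is fine, and $z^0=1$ settles it. One should also note that $\theta$ could in principle be $0$ (i.e.\ $\widetilde g(\chi)^3=-1$), but then $z=1$ and $z^d=1$ for all $d$, which is the most favorable situation. Thus no information about the \emph{specific} value of the cubic Gauss sum $\widetilde g(\chi)$ on $\FF_{q^2}$ is needed — only $\abs{\widetilde g(\chi)}=1$, which was already recorded above.

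For the write-up I would phrase it uniformly via Weyl: the sequence $(d\theta \bmod 2\pi)_{d\ge 0}$ either is eventually periodic and returns to $0$, or is equidistributed; in both cases $\liminf$ of the distance from $d\theta$ to $2\pi\ZZ$ is $0$ along an infinite subsequence, so by continuity of $\cos$ at $0$ we can make $\cos(d\theta)$ exceed any constant $<1$, in particular $\tfrac{9}{10}$, for infinitely many $d$. This gives Lemma~\ref{lem:infinitely-d} directly, and via \eqref{eq:wire} it will feed into the desired sign information $c_\varpi<0$ for the primes $\varpi$ of degree $2d$ with these exponents $d$.
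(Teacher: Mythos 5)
Your proposal is correct and takes essentially the same route as the paper: split according to whether the argument of $-\widetilde g(\chi)^3$ is a rational or irrational multiple of $2\pi$, handle the rational case via periodicity (multiples of the denominator return to $1$), and handle the irrational case via Kronecker's theorem together with continuity of cosine near $0$ to force $\cos(d\theta)\ge\tfrac{9}{10}$ along an infinite set of $d$. The paper makes the last step quantitative by invoking Kronecker's approximation theorem to produce infinitely many $M$ with $\|M\alpha\|\le\tfrac{1}{99}$, but that is just a concrete instance of the density/equidistribution argument you describe.
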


\begin{proof}
Let $\alpha\in [-\frac{1}{2},\frac{1}{2}]$ be such that $-\widetilde g(\chi)^3=e(\alpha)$.
Suppose first that $\alpha=a/b\in \QQ$.
Then any integer $d\ge 0$ that is divisible by $b$
satisfies $e(d\alpha)=1$ and thus suffices for the statement of the lemma.

Suppose next that $\alpha\not\in \QQ$. 
The upper bound is trivial for any $d\in \ZZ_{\geq 0}$ and so we focus on  the lower bound.
It follows from Kronecker's approximation theorem that 
there exist infinitely many integers $M\in \NN$ such that $\|M\alpha\|\le \frac{1}{99}$, where $\|\eta\|$ denotes the distance to the nearest integer from any $\eta\in \RR$.
For any such $M$, taking $d=M$ gives $\cos (2\pi \cdot d\alpha)=\cos(2\pi \gamma)$, for some $\gamma\in [-\frac{1}{99},\frac{1}{99}]$. But then it follows that $\cos(2\pi \gamma)\geq \frac{9}{10}$, meaning 
that the statement of the lemma holds with $d=M$.
\end{proof}

Choose $d$ in Lemma~\ref{lem:infinitely-d}
such that $d$ is large enough in terms of $q$.
 Then  we can find $m\geq q^{2d}/(4d)$ primes 
  $\varpi_1,\dots,\varpi_m\in \Omon$ satisfying $|\varpi_i|\equiv 1\bmod{3}$ and  
$\deg(\varpi_i)=2d$, 
for $1\leq i\leq m$, 
and for which 
\begin{align*}
c_{\varpi_i}\left(
\frac{1}{|\varpi_i|}-\frac{1}{|\varpi_i|^2}\right)
&\leq -\frac{9}{5}\left(\frac{1}{|\varpi_i|^{1/2}}-
\frac{1}{|\varpi_i|^{3/2}}\right)\\
&\leq -\frac{9}{10} \cdot \frac{1}{q^{d}},
\end{align*}
in the light of \eqref{eq:wire}. 
Put  $r_d=\varpi_1\cdots \varpi_m\in \Omon$.  
But then it follows from \eqref{eq:tap} and 
the Chinese remainder theorem that 
$$
\frac{\rho(r_d)}{|r_d|^2}= \prod_{1\leq i\leq m} \frac{\rho(\varpi_i)}{|\varpi_i|^2}
\leq \prod_{1\leq i\leq m}\left(
1-\frac{9}{10} \cdot \frac{1}{q^{d}}\right)
\leq \prod_{1\leq i\leq m}\exp\left(
-\frac{9}{10} \cdot \frac{1}{q^{d}}\right),
$$
since $1-\xi\leq \exp(-\xi)$ for any $\xi\in \RR$.
On recalling that 
$m\geq q^{2d}/(4d)$, we deduce that 
$$
\frac{\rho(r_d)}{|r_d|^2}\leq  
\exp\left(
-\frac{9}{10} \cdot \frac{q^{d}}{4d}\right).
$$
This is  strictly less than $C_A^{-1}$ on taking $d$ to be sufficiently large in terms of $A$, which thereby  completes the proof of Theorem \ref{THM:fixed-A}.

\section{Local densities}

Suppose $\cha(\FF_q) \ne 3$.
In \S~\ref{Sec: VarAnalysis}, we will study a suitable arithmetic variance of the global counting function
\begin{equation*}
r_A(k)
\defeq \#\left\{(x,y,z)\in \OK^3: \begin{array}{l}
x^3+y^3+z^3=k, \\
\deg{x}=\deg{y} \le \tfrac{\deg{k}}{3}+A,\\
\deg y-1 \leq \deg z \leq \deg y
\end{array}\right\}
\end{equation*}
over $q$-adic intervals $\abs{k} = \hat B$ as $B\to \infty$, for fixed $A\ge 0$.
Write $B=3d+\alpha$, where $d\in \ZZ$ and $\alpha\in \{0,1,2\}$,
and define $\tilde{A}=\lfloor \alpha/3+A\rfloor$.
Let us also set $P=t^d$. To prove Theorem~\ref{THM:main-intro} we may take $A$ as large as we wish, and we shall therefore assume $A\geq 2$ in all that follows.

Before proceeding to the variance analysis, we will need to introduce the relevant local densities and show that they  are not too small on average. We begin by  defining the  key weight function in 
our analysis.

\begin{definition}
\label{DEFN:A-skew-weights}
Let $A\geq 0$ and $\alpha\in \{0,1,2\}$ be given. Over $K_\infty$ we define 
\begin{equation*}
\begin{split}
\nu_{A,\alpha}(x,y,z) &\defeq \bm{1}_{\abs{x^3+y^3+z^3} = q^\alpha}
\bm{1}_{1\le \abs{x}=\abs{y}\le q^{\alpha/3+A}}\bm{1}_{|y|q^{-1}\leq |z|\leq |y|}, \\
w_{A,\alpha}(x_1,\dots,x_6) &\defeq \nu_{A,\alpha}(x_1,x_2,x_3)\nu_{A,\alpha}(x_4,x_5,x_6).
\end{split}
\end{equation*}
We will typically write $\nu=\nu_{A,\alpha}$ and $w=w_{A,\alpha}$.
\end{definition}
\begin{remark}
    A few comments are needed to explain our choice of weight function. Firstly, it is not strictly necessary to state $1\leq |x|, |y|$, as this follows from $|x^3+y^3+z^3|=q^\alpha$ and $|z|\leq |y|=|x|$ already. Secondly, it is crucial for our argument that $x, y, z$ are of roughly the same size. This guarantees that there are not too many points lying on linear subspaces on the Fermat hypersurface $\sum_{i=1}^6x_i^3=0$, as seen in Lemma~\ref{Le: NumberPtsLinSpace}. Thirdly, we cannot simply take $|x|=|y|=|z|$, but need to allow for one variable to  be smaller than the others. Indeed, if $|k|=q^{3d+\alpha}$ with $\alpha \neq 0$, then $x^3+y^3+z^3=k$ can only hold if the leading coefficient of $x^3+y^3+z^3$ cancels. When $|x|=|y|=|z|$, then this is only possible if there exists a solution $(x_0,y_0,z_0)\in (\FF_q^\times)^3$ such that $x_0^3+y_0^3+z_0^3=0$. But such solutions do not exist for $q= 2,4,7,13,16$,
as was implicitly observed by Serre and Vaserstein \cite{vaserstein1991sums}*{p.~350}.
\end{remark}
If $|k|=\hat B$, as above, with $B=3d+\alpha$, then we may write
$$
r_A(k)=\sum_{\substack{(x,y,z)\in \OK^3\\ x^3+y^3+z^3=k}}
\nu\left(\frac{x}{P},\frac{y}{P},\frac{z}{P}\right),
$$
where we recall that $P=t^d$.
It is now time to introduce the relevant local  densities. 

The analogue of the  real density is given by 
\begin{equation}\label{eq:real}
\sigma_{\infty,A}(k)=\int_{|\theta|\leq q^{4\tilde{A}}}\psi(-\theta kP^{-3})\int_{K_\infty^3}\nu(x,y,z)\psi(\theta (x^3+y^3+z^3))\dd x \dd y \dd z \dd \theta.
\end{equation}
Next, let  $M\ge 0$ be a parameter to be chosen in due course and define
\begin{equation}\label{Eq: DefiN}
N = \prod_{\substack{\varpi\in \Omon \textnormal{ prime}\\\deg\varpi\le M}} \varpi^{\floor{M/\deg\varpi}}.
\end{equation}
The relevant local densities at the finite places are conveniently bundled together in the expression
\begin{equation}\label{eq:def-rho}
\tilde{\rho}(N,k)=|N|^{-2}\#\{(x,y,z)\in (\OK/N\OK)^3\colon x^3+y^3+z^3\equiv k\bmod{N}\}.
\end{equation}
Then, in \S~\ref{Sec: VarAnalysis} 
we shall compare $r_A(k)$ on average over $k$ with the local counting function
\begin{equation}\label{eq:local}
l_A(k;M)
\defeq \sigma_{\infty,A}(k)\tilde{\rho}(N,k).
\end{equation}

In the remainder of this section we prove some preliminary bounds on the typical sizes of 
$\sigma_{\infty,A}(k)$ and $\tilde{\rho}(N,k)$.
Given $k\in K_\infty$, it will be convenient to introduce the notation $F_k(x,y,z)=x^3+y^3+z^3-k$ and to set $F(\bm{x})=F_0(x_1,x_2,x_3)+F_0(x_4,x_5,x_6)$ for $\bm{x}=(x_1,\dots, x_6)\in K_\infty^6$. We begin by relating the real density to a point count. 
\begin{lemma}\label{Le: RealDens=PointCt}
    Given $k\in \OK$ such that $|k|=|P|^3q^{\alpha}$ with $\alpha\in\{0,1,2\}$, let $m=P^{-3}t^{-3\tilde{A}}k$. Writing $s=t^{-1}$, we have 
    \[
    \sigma_{\infty,A}(k)=\frac{1}{q^{14\tilde{A}+2}}\#\left\{\bm{x}\in\left(\frac{\FF_q[s]}{(s^{7\tilde{A}+2})}\right)^3\colon \begin{array}{l}
        F_0(\bm{x})\equiv m\bmod s^{7\tilde{A}+2},\\ v_s(x_1)=v_s(x_2)\leq \tilde{A},\\
        v_s(x_2)\leq v_s(x_3)\leq v_s(x_2)+1
    \end{array}\right\}.
    \]
\end{lemma}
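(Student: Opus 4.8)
The plan is to unfold the definition of $\sigma_{\infty,A}(k)$ in \eqref{eq:real}, perform the $\theta$-integral to extract a congruence condition, and then rescale all variables from $K_\infty$ to the local ring $\FF_q[[s]]$ with $s=t^{-1}$, so that the resulting integral becomes a normalised count of points in a finite quotient. First I would substitute $\theta = t^{3\tilde A}\eta$ so that the region $\abs\theta\le q^{4\tilde A}$ becomes $\abs\eta \le q^{\tilde A}$, and the phase $\psi(\theta(x^3+y^3+z^3) - \theta k P^{-3})$ becomes $\psi(\eta(x^3+y^3+z^3)t^{3\tilde A} - \eta k P^{-3} t^{3\tilde A})$. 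Next, rescale $x = t^{\tilde A}\xi$ and similarly for $y,z$; this turns the support condition $1\le \abs x = \abs y\le q^{\alpha/3+A}$ (recall $\tilde A = \lfloor \alpha/3 + A\rfloor$) into $q^{-\tilde A}\le \abs\xi = \abs\eta_y \le 1$, i.e. $0 \le v_s(\xi) = v_s(\eta_y)\le \tilde A$ once we pass to the valuation $v_s$, and similarly the condition $\abs y q^{-1}\le \abs z\le \abs y$ becomes $v_s(\eta_y)\le v_s(\eta_z)\le v_s(\eta_y)+1$. After these substitutions the cubic form $x^3+y^3+z^3$ scales by $t^{3\tilde A}$, matching the factor already present, and the phase reads $\psi(\eta F_0(\xi,\eta_y,\eta_z) - \eta m)$ with $m = P^{-3}t^{-3\tilde A}k$ exactly as in the statement; note $\abs m = q^{\alpha - 3\tilde A}\le 1$ so $m\in \FF_q[[s]]$, consistent with the congruence being read modulo a power of $s$.

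The second main step is the standard orthogonality computation: integrating $\psi(\eta(F_0(\xi,\eta_y,\eta_z) - m))$ over $\abs\eta\le q^{\tilde A}$, i.e. over $\eta \in t^{\tilde A}\FF_q[[s]]$ up to measure normalisation, detects the congruence $F_0(\xi,\eta_y,\eta_z)\equiv m$ modulo the dual lattice, which works out to modulus $s^{\tilde A + 1}$ after accounting for the self-duality conventions of $\psi$ on $K_\infty$ — I would track the normalisation so that $\int_{\abs\eta\le q^{\tilde A}}\psi(\eta u)\,\dd\eta$ equals $q^{\tilde A}\bm 1_{v_s(u)\ge \tilde A+1}$ (or the analogous exact constant). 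Then the remaining integral over $(\xi,\eta_y,\eta_z)\in\FF_q[[s]]^3$ with the valuation constraints and the congruence $F_0\equiv m \bmod s^{7\tilde A+2}$ — here the modulus $s^{7\tilde A+2}$ arises because the cubic $F_0$ of three variables each determined modulo $s^{\tilde A+?}$ forces the relevant truncation; I expect the bookkeeping to show that only residues modulo $s^{7\tilde A+2}$ matter, since $F_0(\xi,\eta_y,\eta_z) \bmod s^{7\tilde A+2}$ depends only on $\xi,\eta_y,\eta_z \bmod s^{7\tilde A+2}$, while the valuation conditions only constrain the low-order coefficients. Collapsing the integral over $\FF_q[[s]]^3$ to a normalised sum over $(\FF_q[s]/(s^{7\tilde A+2}))^3$ introduces the measure factor, and combining it with the constant from the $\eta$-integral gives the prefactor $q^{-(14\tilde A+2)}$ claimed (one power $q^{-(7\tilde A+2)}$ per truncated variable, times $3$ variables, adjusted by the $\eta$-normalisation — I would verify the arithmetic gives exactly $14\tilde A + 2$).

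The step I expect to be the main obstacle is pinning down the exact exponents and the precise prefactor: the paper's conventions for the additive character $\psi$ on $K_\infty$, the self-dual Haar measure, and the interplay between the rescaling $t^{\tilde A}$ and the three different modulus-powers ($q^{\tilde A}$ for $\eta$, $s^{\tilde A+1}$ from orthogonality, $s^{7\tilde A+2}$ for the final congruence) must all be reconciled so that the constant comes out as stated rather than off by a power of $q$. I would handle this by writing everything in terms of $v_s$ and keeping a running ledger of measure normalisations, checking consistency at $\tilde A = 0$ as a sanity case. Once the exponents are nailed down, the identity is essentially the definition, so no further substantive work is needed.
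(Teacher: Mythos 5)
Your high-level strategy — execute the $\theta$-integral (orthogonality), rescale the cubic variables by $t^{\tilde A}$, and recognise the resulting expression as a normalised $s$-adic point count — is exactly the paper's strategy, merely with the order of the two operations exchanged (the paper integrates over $\theta$ first and then discretises; you rescale first and then integrate). In principle either order works. However, there is a concrete error in your rescaling that your ``bookkeeping'' step would not be able to repair.

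You substitute $\theta = t^{3\tilde A}\eta$, so that $|\eta|\le q^{\tilde A}$, and you rescale $x = t^{\tilde A}\xi$ (and similarly for $y,z$), so that $x^3+y^3+z^3 = t^{3\tilde A}F_0(\bm\xi)$. You then assert that the factor $t^{3\tilde A}$ coming from the cubic ``matches the factor already present'' and that the phase becomes $\psi(\eta(F_0(\bm\xi)-m))$. This is wrong: the two factors of $t^{3\tilde A}$ do not cancel, they \emph{multiply}, producing a phase
$\psi\bigl(t^{6\tilde A}\,\eta\,(F_0(\bm\xi)-m)\bigr)$
with an extra $t^{6\tilde A}$ that you have not accounted for. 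The substitution you actually want is $\theta = t^{-3\tilde A}\eta$, which sends $|\theta|\le q^{4\tilde A}$ to $|\eta|\le q^{7\tilde A}$ and makes the phase exactly $\psi(\eta(F_0(\bm\xi)-m))$, with the Jacobians of the two rescalings cancelling. With your (incorrect) range $|\eta|\le q^{\tilde A}$, the orthogonality step produces a congruence modulo $s^{\tilde A+1}$ or $s^{\tilde A+2}$ (depending on your character convention), and this \emph{is} the final congruence — it cannot later ``promote'' itself to modulus $s^{7\tilde A+2}$. Your heuristic that ``$F_0(\bm\xi)\bmod s^{7\tilde A+2}$ depends only on $\bm\xi\bmod s^{7\tilde A+2}$'' is true but irrelevant: the same is true modulo $s^{\tilde A+1}$, so nothing forces the larger modulus. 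The modulus is dictated by the $\eta$-range alone; the degree of $F_0$ plays no role (the coefficients are in $\FF_q$, so reduction modulo $s^n$ commutes with evaluating $F_0$). Once the sign is fixed the argument does go through, but you must also be careful that, with the paper's normalisation $\vol(\{|x|<1\})=1$, one has $\vol(\FF_q[[s]])=q$; forgetting this extra factor of $q$ per variable is what makes the constant come out as $q^{-(14\tilde A+5)}$ rather than the correct $q^{-(14\tilde A+2)}$.

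For comparison, the paper avoids this hazard by first evaluating the $\theta$-integral to obtain the volume
$q^{4\tilde A+1}\vol\{(x,y,z)\in\supp\nu: |F_0(x,y,z)-kP^{-3}|<q^{-4\tilde A-1}\}$
and then showing via a short Taylor expansion that this set is invariant under translation by $(t^{-6\tilde A-1}\TT)^3$, so the volume factors through the finite quotient $(t^{\tilde A+1}\TT/t^{-6\tilde A-1}\TT)^3 \simeq (\FF_q[s]/(s^{7\tilde A+2}))^3$. The exponent $7\tilde A+2$ then comes out automatically from $4\tilde A+1$ (the $\theta$-range) plus $3\tilde A$ (the scaling of the cubic), and the prefactor from $q^{4\tilde A+1}\cdot q^{-3(6\tilde A+1)}=q^{-(14\tilde A+2)}$.
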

\begin{proof}
    Interchanging the order of integration and executing the integral over $\theta$ in the definition of $\sigma_{\infty,A}(k)$ yields
    \begin{equation}\label{Eq: aaaaaaa}
    \sigma_{\infty,A}(k)= q^{4\tilde{A}+1}\vol\{(x,y,z)\in\supp \nu\colon |F_0(x,y,z)-kP^{-3}|<q^{-4\tilde{A}-1}\}.
    \end{equation}
    Suppose that $\bm{y}\in K^3_\infty$ satisfies $|\bm{y}|<q^{-6\tilde{A}-1}$ and let $t^{\tilde{A}}\bm{x}\in \supp \nu$. 
    Let us define ${(x,y,z)=t^{\tilde{A}}\bm{x}+\bm{y}}$. Then using Taylor expansion, and recalling that $\tilde{A}\geq \alpha$ one sees that $|F_0(x,y,z)-kP^{-3}|<q^{-4\tilde{A}-1}$ holds if and only if 
    \begin{equation}\label{Eq: SizeFRealDens}
    |F_0(\bm{x})-t^{-3d-3\tilde{A}}k|<q^{-7\tilde{A}-1}.    
    \end{equation}
    In addition, if this holds, then $|F_0(x,y,z)|=q^\alpha$ is automatically satisfied and hence $(x,y,z)\in \supp\nu$ if and only if 
    \begin{equation}\label{Eq: SizexRealdens}
    q^{-\tilde{A}}\leq |x_1|=|x_2| \quad\text{and}\quad |x_2|q^{-1}\leq |x_3|\leq |x_2|    
    \end{equation}
    for $\bm{x}=(x_1,x_2,x_3)$. It follows that the set on the right hand side of \eqref{Eq: aaaaaaa} is invariant under translation by elements of $t^{-6\tilde{A}-1}\TT^3$ so that the volume in the right hand side of \eqref{Eq: aaaaaaa} factors through the quotient group $(t^{\tilde{A}+1}\TT/t^{-6\tilde{A}-1}\TT)^3$. 
    
    Let $s=t^{-1}$. Then for any $l\in \ZZ_{\geq 0}$ we have a natural identification $s^{-1}\TT/s^l\TT\simeq \FF_q[s]/(s^{l+1})$ in the category of rings. Under this identification, for any $x\in s^{-1}\TT/s^l\TT$ we have $|x|=q^{-v_s(x)}$, so that \eqref{Eq: SizeFRealDens} and \eqref{Eq: SizexRealdens} hold if and only 
    \[
    F_0(\bm{x})\equiv s^{3(d+\tilde{A})}k\bmod{ s^{7\tilde{A}+2}}
    \]
    and 
    \[
    v_s(x_1)=v_s(x_2)\leq \tilde{A},\quad v_s(x_2)\leq v_s(x_3)\leq v_s(x_2)+1.
    \]
    Note that this makes sense as $s^{3(d+\tilde{A})}k\in \FF_q[s]$. 
    The desired result now follows, since $\vol(\bm{y}\in K_\infty^3\colon |\bm{y}|<q^{-6\tilde{A}-1}\}=q^{-3(6\tilde{A}+1)}$.
    \end{proof}
\begin{proposition}\label{Prop: RealDensitylarge}
    Uniformly over $k\in \OK$, we have 
    $\sigma_{\infty,A}(k) \gg A$.
\end{proposition}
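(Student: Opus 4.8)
The approach is to feed the exact point count of Lemma~\ref{Le: RealDens=PointCt} into a Hensel-type construction that produces $\gg A$ pairwise disjoint families of solutions, each of the expected size, so that the lower bound accumulates a factor $A$. First I would make the relevant data explicit: for $k\in\OK$ with $|k|=|P|^3q^{\alpha}$ (the only relevant case, and the one covered by Lemma~\ref{Le: RealDens=PointCt}), the element $m=P^{-3}t^{-3\tilde{A}}k$ satisfies $v_s(m)=3\tilde{A}-\alpha\ge 0$, so $m=s^{3\tilde{A}-\alpha}m_0$ with $m_0\in\FF_q[[s]]$ a unit. By Lemma~\ref{Le: RealDens=PointCt} it then suffices to find $\gg A\cdot q^{14\tilde{A}+2}$ tuples $\bm{x}\in(\FF_q[s]/(s^{7\tilde{A}+2}))^3$ with $x_1^{3}+x_2^{3}+x_3^{3}\equiv m\bmod s^{7\tilde{A}+2}$, $v_s(x_1)=v_s(x_2)\le\tilde{A}$ and $v_s(x_2)\le v_s(x_3)\le v_s(x_2)+1$.

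For each integer $e$ with $1\le e\le\tilde{A}-1$ I would count the tuples of valuation profile $v_s(x_1)=v_s(x_2)=e$, $v_s(x_3)=e+1$. Writing $x_1=s^{e}a_1$, $x_2=s^{e}a_2$, $x_3=s^{e+1}a_3$ with $a_i(0)\ne 0$ and setting $M=7\tilde{A}+2-3e$, the congruence is equivalent to
\[
a_1^{3}+a_2^{3}+s^{3}a_3^{3}\equiv s^{3(\tilde{A}-e)-\alpha}m_0\pmod{s^{M}},
\]
whose right-hand side has valuation $3(\tilde{A}-e)-\alpha\ge 1$ because $e\le\tilde{A}-1$ and $\alpha\le 2$. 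Comparing constant terms forces $a_1(0)^{3}=-a_2(0)^{3}$, which is met by $a_1(0)=-a_2(0)$; since $\cha\FF_q\ne 3$, the polynomial $X^{3}+a_2(0)^{3}$ has $3X^{2}$ a unit at $X=-a_2(0)$, so Hensel's lemma yields, for every choice of $a_2$ modulo $s^{M}$ and $a_3$ modulo $s^{M-3}$ with nonzero constant terms, a unique $a_1$ modulo $s^{M}$ (automatically with $a_1(0)\ne 0$) solving the congruence. A short computation shows the remaining higher-degree coefficients of $a_1,a_2,a_3$ needed to pin $x_1,x_2,x_3$ down modulo $s^{7\tilde{A}+2}$ are unconstrained, since each enters the cube $x_i^{3}$ only through a term already divisible by $s^{7\tilde{A}+2}$. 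Tallying the free parameters, this family contributes $\gg q^{14\tilde{A}}$ tuples, and — this is the point — the count is independent of $e$: the loss incurred by only working modulo the smaller power $s^{M}$ is exactly offset by the extra freedom gained on lifting back to $s^{7\tilde{A}+2}$.

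The families for distinct $e\in\{1,\dots,\tilde{A}-1\}$ are pairwise disjoint, having distinct valuation profiles, and all lie in the region appearing in Lemma~\ref{Le: RealDens=PointCt}. Since $\tilde{A}\asymp A$ (recall we may assume $A\ge 2$), summing over the $\gg A$ admissible $e$ and dividing by $q^{14\tilde{A}+2}$ gives $\sigma_{\infty,A}(k)\gg A$, uniformly over the relevant $k$.

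I expect the main obstacle to be the coefficient bookkeeping in the middle step: one must verify precisely that (i) the higher-degree coefficients of $a_1,a_2,a_3$ are genuinely free, and (ii) their number is $q^{6e+O(1)}$, so that it cancels the factor $q^{-6e}$ hidden in $q^{2M}=q^{14\tilde{A}-6e+O(1)}$ and every family contributes the same order $q^{14\tilde{A}}$. It is exactly this balance that converts the $\asymp A$ admissible valuations $e$ into the claimed factor of $A$; a slip of even a single power of $q^{e}$ would either destroy the bound or prove something false. A subsidiary point worth flagging is that using the profile $(e,e,e+1)$ rather than $(e,e,e)$ is what makes the Hensel step work for all $q$ with $\cha\FF_q\ne 3$: the constant term of $s^{3}a_3^{3}$ then drops out, and the cube root $-a_2(0)$ of $-a_2(0)^{3}$ needed to start Hensel always exists, whereas a solution of $x_0^{3}+y_0^{3}+z_0^{3}=0$ in $(\FF_q^{\times})^{3}$ need not.
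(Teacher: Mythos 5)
Your proposal is correct and follows essentially the same route as the paper: decompose by the valuation profile $(e,e,e+1)$ with $e$ ranging over $\gg A$ admissible values, solve the leading constraint $a_1(0)^3+a_2(0)^3=0$ by $a_1(0)=-a_2(0)$, invoke Hensel (using $\cha\FF_q\ne 3$), and check that each valuation class contributes $\gg q^{14\tilde A}$ so the total is $\gg A\,q^{14\tilde A}$. One small remark: you correctly compute $v_s(m)=3\tilde A-\alpha$, whereas the paper writes $v_s(m)=3\tilde A$, which appears to be a minor slip there; in either case $v_s(m)\ge 1$ holds under the standing assumption $A\ge 2$, so the Hensel step goes through.
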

\begin{proof}
    For any integers $1\leq b +1\leq c\leq 7\tilde{A}+2$ with $c\geq 1$, let 
    \[
    n_b(c)=\#\{\bm{x}\in (\FF_q[s]/s^c)^3\colon F_0(\bm{x})\equiv m \bmod s^c,\; v_s(x_1)=v_s(x_2)=b,\; v_s(x_3)=b+1\}.
    \]
    It is then clear from Lemma~\ref{Le: RealDens=PointCt} that 
    \begin{equation}\label{Eq: Lowerboundsigmakinfty}
        \sigma_{\infty,A}(k)\geq q^{-14\tilde{A}-2}\sum_{b=0}^{\tilde{A}-1}n_b(7\tilde{A}+2).
    \end{equation}
    Let us begin with the contribution from $b=0$. We have $v_s(m)=v_s(s^{3d+3\tilde{A}}k)=3\tilde{A}>0$. In particular, $F_0(\bm{x})\equiv m \bmod s$ holds for $\bm{x}$ such that $v_s(x_1)=v_s(x_2)=0$ and $v_s(x_3)=1$ if and only if $x_1^3+x_2^3\equiv 0 \bmod s$. One such solution $\bm{a}$ is provided by taking $a_1=1$ and $a_2=-1$. As $3\nmid \cha(\FF_q)$, this solution is smooth and a Hensel lifting argument shows that there exists a solution $\bm{x}\in \FF_q[s]/(s^2)$ with $\bm{x}\equiv \bm{a}\bmod s$ and $x_3\not\equiv 0\bmod s^2$. Using Hensel lifting again shows that any such $\bm{x}$ lifts to $q^{14\tilde{A}}$ solutions modulo $s^{7\tilde{A}+2}$, so that 
    \begin{equation}\label{Eq: lowerboundn0}
    n_0(7\tilde{A}+2)\geq q^{14\tilde{A}}. 
    \end{equation}
    Now let us assume that $b\geq 1$. Then any $\bm{x}$ counted by $n_b(7\tilde{A}+2)$ can be written as $\bm{x}=s^b\bm{x}'$, where $\bm{x}'\in \FF_q[s]/(s^{7\tilde{A}+2-b})$ is such that $v_s(x_1')=v_s(x_2')=0$ and $v_s(x_3')=1$. Let $m_b=ms^{-3b}$ and note that $v_s(m_b)=3\tilde{A}-3b \geq 3$. We thus see that $\bm{x}$ is counted by $n_b(7\tilde{A}+2)$ if and only if $F_0(\bm{x}')\equiv m_b\bmod s^{7\tilde{A}+2-3b}$. This only depends on $\bm{x}'$ modulo $s^{7\tilde{A}+2-3b}$ and hence 
    \[
    n_b(7\tilde{A}+2)\geq q^{6b}n_0'(7\tilde{A}+2-3b),
    \]
    where $n_0'$ is defined as $n_0$ with $m$ replaced by $m_b$. As $v_s(m_b)\geq 1$, the argument leading to \eqref{Eq: lowerboundn0} also applies to $n_0'$ and yields 
    \begin{equation}\label{Eq: lowerboundnb}
        n_b(7\tilde{A}+2)\geq q^{6b}n_0'(7\tilde{A}+2-3b)\geq q^{14\tilde{A}}.
    \end{equation}
    Inserting \eqref{Eq: lowerboundnb} into \eqref{Eq: Lowerboundsigmakinfty} gives
    \begin{align*}
    \sigma_{\infty,A}(k)&\geq q^{-14\tilde{A} -2}\sum_{b=0}^{\tilde{A}-1}q^{14\tilde{A}}
    = \frac{\tilde{A}}{q^2}. 
    \end{align*}
    Since $\tilde{A} > A-1 \gg A$, and $q$ is fixed, the proposition follows.
\end{proof}

Finally, for the local densities 
\eqref{eq:def-rho}
at the finite places we require the following result,
showing that on average they are not too small.
\begin{proposition}\label{Prop: LocDenAvg}
    Let $N\in \Omon$. Then 
    \[
    |N|^{-1}\sum_{|k|<|N|}\frac{1}{\tilde{\rho}(N,k)}\ll 1.
    \]
\end{proposition}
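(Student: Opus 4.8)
The plan is to reduce the claim to a purely local estimate, prime by prime, using the multiplicativity of $\tilde\rho(N,k)$ in $N$. First I would note that by the Chinese remainder theorem, if $N=\prod_\varpi \varpi^{e_\varpi}$ then
\[
|N|^{-1}\sum_{|k|<|N|}\frac{1}{\tilde\rho(N,k)}=\prod_{\varpi\mid N}\left(|\varpi^{e_\varpi}|^{-1}\sum_{|k|<|\varpi^{e_\varpi}|}\frac{1}{\tilde\rho(\varpi^{e_\varpi},k)}\right),
\]
since $k\bmod N$ decomposes as a tuple of residues $k\bmod \varpi^{e_\varpi}$ and $\tilde\rho(N,k)=\prod_\varpi\tilde\rho(\varpi^{e_\varpi},k)$. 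So it suffices to bound each local factor by $1+O(|\varpi|^{-1-\delta})$ for some $\delta>0$, uniformly in the exponent $e_\varpi$, so that the product over all primes converges; the special structure of $N$ in \eqref{Eq: DefiN} is irrelevant beyond this.

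Next I would analyse a single local factor. Fix a prime $\varpi$ and exponent $e\ge 1$. Here $\tilde\rho(\varpi^e,k)=|\varpi^e|^{-2}\,M(\varpi^e,k)$ where $M(\varpi^e,k)=\#\{\bm x\in(\OK/\varpi^e)^3: x_1^3+x_2^3+x_3^3\equiv k\}$. The key point is that the diagonal cubic form $x_1^3+x_2^3+x_3^3$ defines a smooth affine hypersurface over $\OK/\varpi$ away from the origin (using $\Char(\FF_q)\ne 3$), so a Hensel-lifting argument shows $M(\varpi^e,k)\ge |\varpi^e|^2\cdot c$ with $c>0$ as soon as the reduction $x_1^3+x_2^3+x_3^3\equiv k\bmod\varpi$ has a solution with not all $x_i\equiv0$; in fact for $|\varpi|$ large this congruence is solvable for every $k$ by Weil bounds for the number of points on the affine curve, and one even gets $M(\varpi^e,k)=|\varpi^e|^2(1+O(|\varpi|^{-1/2}))$ for all but the $\varpi$-divisible residues. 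The genuinely small values of $\tilde\rho(\varpi^e,k)$ occur only when $\varpi\mid k$ and many low-order $s$-adic digits of $k$ are constrained; for such $k$ one still has the crude lower bound $\tilde\rho(\varpi^e,k)\gg |\varpi|^{-v_\varpi(k)}$ coming from scaling $\bm x=\varpi^{\lfloor v/3\rfloor}\bm x'$ and solving the reduced congruence, exactly as in the proof of Proposition~\ref{Prop: RealDensitylarge}.

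Assembling, I would split the sum $|\varpi^e|^{-1}\sum_{|k|<|\varpi^e|}1/\tilde\rho(\varpi^e,k)$ according to $j=\min(v_\varpi(k),e)$. The number of $k$ with $v_\varpi(k)=j$ (and $|k|<|\varpi^e|$) is $|\varpi^e|(1-|\varpi|^{-1})|\varpi|^{-j}$, and on this set $1/\tilde\rho(\varpi^e,k)\ll |\varpi|^{\min(j,\,e)}$ up to absolute constants (the bound degrading geometrically in $j$, capped because past $v_\varpi(k)\ge e$ the residue is $0$ and $\tilde\rho$ is just $|\varpi^e|^{-2}M(\varpi^e,0)\gg 1$). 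Hence the $j$-th term contributes $O(1-|\varpi|^{-1})$, but crucially only $j=0$ is of size $\approx 1$ while $j\ge1$ contributes $O(|\varpi|^{-1})$ after summing the geometric series — wait, more carefully: the $j$-th term is $O(|\varpi|^{-j}\cdot|\varpi|^{j})=O(1)$ which does not obviously sum, so one must use the sharper $\tilde\rho(\varpi^e,k)=1+O(|\varpi|^{-1/2})$ for $j=0$ and large $\varpi$, giving local factor $1+O(|\varpi|^{-1/2})\cdot(\text{something summable})$. The honest bookkeeping, which I would carry out in detail, is: local factor $\le 1 + C|\varpi|^{-1/2}$ for all $\varpi$ with $|\varpi|$ exceeding an absolute constant, and $\le C'$ for the finitely many small $\varpi$; since $\prod_\varpi(1+C|\varpi|^{-1/2})$ converges (the sum $\sum_\varpi |\varpi|^{-1/2}$ converges in $\FF_q[t]$ as there are $\ll q^{n}/n$ primes of degree $n$ and $q^{n/2}$ grows faster), the product is $O(1)$.

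\medskip
\noindent\textbf{Main obstacle.} The delicate point is getting a bound on the local factor that is summable over primes, i.e.\ beating $1+O(|\varpi|^{-1})$ — the naive Hensel lower bound $\tilde\rho\gg1$ gives only a uniform $O(1)$ per prime, which fails to multiply. One must show that the "average" of $1/\tilde\rho(\varpi^e,k)$ over $k$ is $1+O(|\varpi|^{-1-\delta})$, which forces us to use equidistribution (Weil/Deligne bounds for point counts on the diagonal cubic surface mod $\varpi$) to control both the main term $\tilde\rho(\varpi^e,k)\approx 1$ on the bulk of residues and the sparse bad set where $\varpi\mid k$. Handling the bad set requires the scaling argument to track how fast $1/\tilde\rho$ can grow relative to how rare such $k$ are, and checking the two rates cancel with room to spare.
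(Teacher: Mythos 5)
There is a genuine gap, and it is the crux of the problem. Your local factor estimate $1+O(|\varpi|^{-1/2})$ does \emph{not} give a convergent product over primes of $\FF_q[t]$: the number of primes of degree $n$ is $\asymp q^n/n$, so
\[
\sum_{\varpi}|\varpi|^{-1/2}\asymp\sum_{n\ge1}\frac{q^n}{n}\cdot q^{-n/2}=\sum_{n\ge1}\frac{q^{n/2}}{n},
\]
which diverges (exponentially fast, in fact). Your parenthetical claim that ``$q^{n/2}$ grows faster'' points in exactly the wrong direction — the ratio $q^n/q^{n/2}=q^{n/2}\to\infty$. The same problem would already be fatal over $\ZZ$, where $\sum_p p^{-1/2}$ diverges. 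Any argument that only controls $1/\tilde\rho(\varpi^e,k)$ pointwise via Weil or Hensel bounds is doomed to produce a local factor $1+O(|\varpi|^{-1/2})$ at best, so a new idea is needed.

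The missing idea, and the one the paper actually uses, is to exploit cancellation in the average over $k$ rather than bounding each $k$ separately. After reducing (as you do, by CRT and by Hensel lifting from $\varpi^e$ down to $\varpi$ away from $\varpi\mid(x,y,z)$) to a sum over $k\bmod\varpi$, one invokes the explicit Jacobi-sum formula for the point count: for $\varpi\nmid k$ and $|\varpi|\equiv1\bmod3$,
\[
\tilde\rho^*(\varpi,k)=1+\frac{3(\chi(k)+\bar\chi(k))}{|\varpi|}+O(|\varpi|^{-3/2}),
\]
so that $1/\tilde\rho^*(\varpi,k)=1-\tfrac{3(\chi(k)+\bar\chi(k))}{|\varpi|}+O(|\varpi|^{-3/2})$. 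The term of size $|\varpi|^{-1}$ is not small, but when you sum it over all $k\not\equiv0\bmod\varpi$ it \emph{vanishes identically}, because $\chi$ and $\bar\chi$ are nontrivial characters and $\sum_{k\in\FF_{|\varpi|}^\times}\chi(k)=0$. The $k\equiv0$ residue is a single term contributing $O(|\varpi|^{-1}\cdot(1+O(|\varpi|^{-1/2})))=O(|\varpi|^{-1})$ after averaging, and the $|\varpi|\equiv2\bmod3$ primes give $\tilde\rho^*(\varpi,k)=1$ exactly for $\varpi\nmid k$. The net result is a local factor $1+O(|\varpi|^{-3/2})$, and $\sum_n\frac{q^n}{n}q^{-3n/2}$ does converge. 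So: keep your multiplicativity and Hensel-reduction framework (those are fine and agree with the paper), but replace the pointwise Weil estimate by the exact character-sum expression and use orthogonality of $\chi$ to kill the $|\varpi|^{-1}$ term on average.
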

\begin{proof}
The quantity $\tilde{\rho}(N,k)$ is a multiplicative function of $N$ by the Chinese remainder theorem, so that 
$$
|N|^{-1}\sum_{|k|<|N|}
\frac{1}{\tilde{\rho}(N,k)}=
|N|^{-1}\sum_{|k|<|N|}
\prod_{\varpi^e\| N}\frac{1}{\tilde{\rho}(\varpi^e,k)}.
$$
Since the value of 
$\tilde{\rho}(\varpi^e,k)$ only depends on $k$ modulo $\varpi^e$, we may further write
\begin{equation}\label{eq:lemon}
|N|^{-1}
\sum_{|k|<|N|}\frac{1}{\tilde{\rho}(N,k)}=
\prod_{\varpi^e\| N}
A(\varpi^e),
\end{equation}
where
$$
A(\varpi^e)=\frac{1}{|\varpi^e|}
\sum_{|k|<|\varpi^e|}
\frac{1}{\tilde{\rho}(\varpi^e,k)}.
$$
We need to get a sufficiently sharp upper bound for $A(\varpi^e)$.

Let $\tilde{\rho}^*(\varpi^e,k)$ be defined as for $\tilde{\rho}(\varpi^e,k)$, but with the extra constraint that 
$(x,y,z)$ should be coprime to $\varpi$. Then clearly 
$$
\tilde{\rho}(\varpi^e,k)\geq \tilde{\rho}^*(\varpi^e,k)=\tilde{\rho}^*(\varpi,k),
$$
by Hensel's lemma and the fact that the characteristic is not  $3$.
Thus it follows that 
\begin{equation}\label{eq:lime}
\begin{split}
A(\varpi^e)&\leq 
\frac{1}{|\varpi|}
\sum_{|k|<|\varpi|}
\frac{1}{\tilde{\rho}^*(\varpi,k)}\\
&=
\frac{1}{|\varpi|}
\frac{1}{\tilde{\rho}^*(\varpi,0)}+
\frac{1}{|\varpi|}
\sum_{0<|k|<|\varpi|}
\frac{1}{\tilde{\rho}^*(\varpi,k)},
\end{split}
\end{equation}
According to \eqref{eq:tap}, we have 
$$
\tilde{\rho}^*(\varpi,0)=1+\frac{c_{\varpi}}{|\varpi|}-\frac{c_\varpi+1}{|\varpi|^2},
$$
where $c_\varpi\in \ZZ$ satisfies $|c_\varpi|\leq 2\sqrt{|\varpi|}$.
Hence there exists an absolute constant $C_1>0$ such that 
$$
\frac{1}{\tilde{\rho}^*(\varpi,0)}\leq 1+ \frac{C_1}{\sqrt{|\varpi|}}.
$$

We proceed to an analysis of 
$\tilde{\rho}^*(\varpi,k)$ when $\varpi\nmid k$. Identifying $k$ with the image of its reduction modulo $\varpi$, it is  clear that 
$$
\tilde{\rho}^*(\varpi,k)=\tilde{\rho}(\varpi,k)=
\frac{1}{r^2}\#\{(x,y,z)\in \FF_r^3: x^3+y^3+z^3=k\},
$$
where $r=|\varpi|$ and $\FF_r\cong \OK/\varpi \OK$.
Let us write 
 $\nu(r)=\#\{(x,y,z)\in \FF_r^3: x^3+y^3+z^3=k\}$.
 We may now appeal to the formulae in \cite[Chap.~8]{IR} to calculate this quantity. 
If $r\equiv 2\bmod{3}$ then $\nu(r)=r^2$.
If $r\equiv 1\bmod{3}$,  we let $\chi:\FF_r^*\to \CC$ be a non-trivial character of order $3$. Then one may check that 
$$\nu(r)=r^2+3\left(\chi(k)+\bar\chi(k)\right)r-2\Re J(\chi,\chi),
$$
where $J(\chi,\chi)$ is a Jacobi sum and satisfies $|J(\chi,\chi)|= \sqrt{r}$.
We deduce that 
$$
\frac{1}{\tilde{\rho}^*(\varpi,k)}\leq 
1-\frac{3(\chi(k)+\bar\chi(k))}{|\varpi|} +\frac{C_2}{|\varpi|^{3/2}},
$$
for a suitable absolute  constant $C_2>0$.

Bringing these estimates together in \eqref{eq:lime}, we  obtain
\begin{align*}
A(\varpi^e)
&\leq \frac{1}{|\varpi|}
\left(1+ \frac{C_1}{\sqrt{|\varpi|}}\right)
+
\frac{1}{|\varpi|}
\sum_{0<|k|<|\varpi|}1\\
&\leq 1 + \frac{C_1}{|\varpi|^{3/2}},
\end{align*}
if $|\varpi|\equiv 2\bmod{3}$. On the other hand, if 
$|\varpi|\equiv 1\bmod{3}$, then
\begin{align*}
A(\varpi^e)
&\leq \frac{1}{|\varpi|}
\left(1+ \frac{C_1}{\sqrt{|\varpi|}}\right)
+
\frac{1}{|\varpi|}
\sum_{0<|k|<|\varpi|}
\left(
1-\frac{3(\chi(k)+\bar\chi(k))}{|\varpi|} +\frac{C_2}{|\varpi|^{3/2}}\right)\\
&\leq 1 -\frac{3}{|\varpi|^2}
\sum_{0<|k|<|\varpi|}
(\chi(k)+\bar\chi(k)) 
+\frac{C_1+C_2}{|\varpi|^{3/2}}.
\end{align*}
But the sum  over $k$ vanishes, since $\chi,\bar\chi$ are non-trivial characters modulo $\varpi$ and the image of $k$ runs over all of  $\FF_r^*$ on reduction modulo $\varpi$.

Returning to \eqref{eq:lemon}, we conclude that 
$$
|N|^{-1}
\sum_{|k|<|N|}\frac{1}{\tilde{\rho}(N,k)}\leq 
\prod_{\varpi \mid N}
\left(
1 + \frac{C_1+C_2}{|\varpi|^{3/2}}\right)\ll 1,
$$
as required.
\end{proof}

\section{Variance analysis for density 1}\label{Sec: VarAnalysis}

For the rest of the paper, assume $\cha(\FF_q) > 3$.
Having carried out our study of local densities, in this section we turn to a variance analysis of $r_A(k)$ and provide a proof of Theorem~\ref{THM:main-intro}, assuming Theorem~\ref{Thm: Manin}, whose proof will be completed in \S~\ref{Sec: CentreDual}. 

\newcommand{\Var}{\operatorname{Var}}

Recall the definition 
 \eqref{eq:local}
of $l_A(k;M)$.
We are interested in the arithmetic variance
\begin{equation*}
\Var_A(B;M) = \sum_{\substack{k\in \OK\\ \abs{k}=\hat B} }(r_A(k) - l_A(k;M))^2,
\end{equation*}
ranging over all $k\in \OK$ with $\deg{k}=B$.
We will analyse this roughly as in \cite{wang2022thesis}*{Chapter~2}, which is in turn based on arguments of \cite{diaconu2019admissible}. A crucial role in our  analysis is played by the counting function 
\begin{equation}\label{eq:fels}
    N_w(P)\coloneqq \sum_{\substack{\bm{x}\in \OK^6 \\ F(\bm{x})=0}}w(\bm{x}/P),
\end{equation}
where $w$ is the weight function constructed in 
Definition \ref{DEFN:A-skew-weights}.

We then define 
\begin{align}
    \Sing &=\sum_{\substack{r\in \OK^+}}|r|^{-6}S_r(\bm{0}),\label{Eq: SingSer}
\end{align}
to be the {\em singular series} associated to our counting function $N_w(P)$, 
where 
\begin{equation}\label{eq:sun}
S_r(\bm{0})= \sum_{\substack{|a|<|r|\\ \gcd(a,r)=1}}
\sum_{|\bm{x}|<|r|}\psi\left(\frac{aF(\bm{x})}{r}\right).
    \end{equation}
Moreover, the {\em singular integral} is defined to be
\begin{align}
        \sigma_{\infty} &=\int_{|\theta|\leq q^{4\tilde{A}}}\int_{K_\infty^6}
        w(\bm{x})\psi(\theta F(\bm{x}))\dd\bm{x}\dd\theta.\label{Eq: singInt}
\end{align}

We have 
\begin{align*}
    \Var_A(B;M) & = \sum_{|k|=\hat{B}}r_A(k)^2 - 2 \sum_{|k|=\hat{B}}r_A(k)l_A(k;M) + \sum_{|k|=\hat{B}}l_A(k;M)^2 \\
    &= \Sigma_1 -2 \Sigma_2 + \Sigma_3,
\end{align*}
say. We will analyse each term $\Sigma_i$ individually, for $i=1,2,3$, with the main bulk of our work being concerned with producing an asymptotic for $\Sigma_1$. Before commencing our analysis, we will need two auxiliary results. 
\begin{lemma}\label{Le: Avg.singser.squared}
    Let $N\in \OK$, $B=3d+\alpha\geq 0$ and suppose that $|N|<|P|^3q^{-4\tilde{A}}$. Then for any $a\in \OK$ with $|a|<|N|$, we have 
    \[ \mathcal{Q} \defeq \sum_{\substack{|k|=\hat{B} \\ k\equiv a \bmod N}}\sigma_{\infty,A}(k)^2 = \sigma_\infty \frac{|P|^3}{|N|},
    \]
    where $\sigma_{\infty,A}(k)$ is given by \eqref{eq:real}.
\end{lemma}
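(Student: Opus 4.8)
The plan is to unfold $\sigma_{\infty,A}(k)^2$ via the Fourier formula \eqref{eq:real}, interchange the sum over $k$ with the integrals, evaluate the resulting elementary character sum over $k$, and identify what survives as $\sigma_\infty$; morally this is a Parseval identity for the ``Fourier coefficients'' $\sigma_{\infty,A}(k)$ of $\theta\mapsto\int_{K_\infty^3}\nu\,\psi(\theta(x^3+y^3+z^3))\dd x\dd y\dd z$ on the compact ball $\{\abs{\theta}\le q^{4\tilde{A}}\}$. I would first record the preliminary reduction that $\sigma_{\infty,A}(k)=0$ unless $\abs{k}=\hat{B}$: by \eqref{Eq: aaaaaaa} it equals $q^{4\tilde{A}+1}\vol\{(x,y,z)\in\supp\nu:\abs{F_0(x,y,z)-kP^{-3}}<q^{-4\tilde{A}-1}\}$, and since $\abs{F_0(x,y,z)}=q^\alpha$ on $\supp\nu$ while $q^{-4\tilde{A}-1}<q^\alpha$, the ultrametric inequality forces $\abs{kP^{-3}}=q^\alpha$, i.e. $\abs{k}=q^\alpha\abs{P}^3=\hat{B}$. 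Thus $\mathcal{Q}=\sum_{\deg k\le B,\,k\equiv a\bmod N}\sigma_{\infty,A}(k)^2$, a finite sum; since moreover $\supp w$ and the $\theta$-balls are compact with bounded integrands, all interchanges below are legitimate.

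Writing $\sigma_{\infty,A}(k)^2$ as a product of two copies of \eqref{eq:real} in the variables $(\theta_1,x_1,x_2,x_3)$ and $(\theta_2,x_4,x_5,x_6)$, recalling $w=\nu\otimes\nu$ and $F(\bm{x})=F_0(x_1,x_2,x_3)+F_0(x_4,x_5,x_6)$, and pulling the $k$-sum inside gives
\[
\mathcal{Q}=\int_{\abs{\theta_1},\abs{\theta_2}\le q^{4\tilde{A}}}\int_{K_\infty^6}w(\bm{x})\,\psi\bigl(\theta_1F_0(x_1,x_2,x_3)+\theta_2F_0(x_4,x_5,x_6)\bigr)\,T(\theta_1+\theta_2)\dd\bm{x}\dd\theta_1\dd\theta_2,
\]
where $T(\eta)=\sum_{\deg k\le B,\,k\equiv a\bmod N}\psi(-\eta kP^{-3})$. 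Parametrising $k=a+Nm$, so that (using $\deg a<\deg N$) the condition $\deg k\le B$ becomes $\deg m\le e$ with $e\defeq B-\deg N\ge 0$ (using $\abs{N}<\abs{P}^3q^{-4\tilde{A}}$), the sum $T(\eta)$ factors as $\psi(-\eta aP^{-3})$ times an orthogonality sum over $m$ which equals $q^{e+1}$ if $\abs{\eta NP^{-3}}<q^{-e-1}$ and $0$ otherwise. Chasing the $q$-powers (again using the bound on $\abs{N}$) turns the condition into $\abs{\eta}<q^{-\alpha-1}$, on which range $\abs{\eta aP^{-3}}<q^{-1}$ and so $\psi(-\eta aP^{-3})=1$; hence $T(\eta)=q^{e+1}\bm{1}_{\abs{\eta}<q^{-\alpha-1}}$.

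Finally I would apply the measure-preserving change of variables $(\theta_1,\theta_2)\mapsto(\theta,\phi)\defeq(\theta_1,\theta_1+\theta_2)$; the domain becomes $\{\abs{\theta}\le q^{4\tilde{A}}\}\times\{\abs{\phi}<q^{-\alpha-1}\}$ (the old bound on $\theta_2$ being automatic once $\abs{\phi}$ is small) and the phase becomes $\psi\bigl(\theta(F_0(x_1,x_2,x_3)-F_0(x_4,x_5,x_6))\bigr)\psi\bigl(\phi F_0(x_4,x_5,x_6)\bigr)$. On $\supp w$ one has $\abs{F_0(x_4,x_5,x_6)}=q^\alpha$, so $\psi(\phi F_0(x_4,x_5,x_6))=1$ throughout $\{\abs{\phi}<q^{-\alpha-1}\}$ and integrating out $\phi$ merely contributes the factor $\vol\{\abs{\phi}<q^{-\alpha-1}\}=q^{-\alpha-1}$. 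Thus $\mathcal{Q}=q^{e-\alpha}\int_{\abs{\theta}\le q^{4\tilde{A}}}\int_{K_\infty^6}w(\bm{x})\,\psi\bigl(\theta(F_0(x_1,x_2,x_3)-F_0(x_4,x_5,x_6))\bigr)\dd\bm{x}\dd\theta$. Since $(x_4,x_5,x_6)\mapsto(-x_4,-x_5,-x_6)$ preserves $w$ and the Haar measure while sending $F_0(x_4,x_5,x_6)\mapsto-F_0(x_4,x_5,x_6)$, this integral equals $\int_{\abs{\theta}\le q^{4\tilde{A}}}\int w(\bm{x})\psi(\theta F(\bm{x}))\dd\bm{x}\dd\theta=\sigma_\infty$, and as $q^{e-\alpha}=q^{B-\deg N-\alpha}=\abs{P}^3/\abs{N}$ we obtain $\mathcal{Q}=\sigma_\infty\abs{P}^3/\abs{N}$.

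The step I expect to need the most care is the sign-matching at the end: after evaluating the $k$-sum the two cubic forms $F_0(x_1,x_2,x_3)$ and $F_0(x_4,x_5,x_6)$ end up coupled through $\phi$ with \emph{opposite} signs, so the surviving $\theta$-integral naively carries $\psi(\theta(F_0(x_1,x_2,x_3)-F_0(x_4,x_5,x_6)))$ rather than the $\psi(\theta F(\bm{x}))$ appearing in $\sigma_\infty$; it is the invariance of $w$ and of Haar measure under $(x_4,x_5,x_6)\mapsto(-x_4,-x_5,-x_6)$, which flips the sign of the second form, that reconciles them. The rest is routine but careful bookkeeping of the $q$-power thresholds (in the orthogonality sum over $m$ and in the $\phi$-integral), all governed by the hypothesis $\abs{N}<\abs{P}^3q^{-4\tilde{A}}$.
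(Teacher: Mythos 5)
Your proof is correct and follows essentially the same route as the paper's: unfold $\sigma_{\infty,A}(k)^2$, exchange orders, evaluate the $k$-sum by orthogonality under the hypothesis $|N|<|P|^3 q^{-4\tilde{A}}$, change variables to decouple the two phases, and exploit $|F_0|=q^\alpha$ on $\supp\nu$ together with the sign flip $\bm{x}_2\mapsto-\bm{x}_2$ to recover $\sigma_\infty$. The only stylistic difference is that you dispose of the vanishing piece upfront by noting $\sigma_{\infty,A}(k)=0$ unless $|k|=\hat B$ (so you can sum over $\deg k\le B$ and get a single indicator from orthogonality), whereas the paper keeps the exact-degree sum, obtains $q\bm{1}_{|\gamma|<q^{-\alpha-1}}-\bm{1}_{|\gamma|<q^{-\alpha}}$, and then kills the second term via $\int_{|\gamma|<q^{-\alpha}}\psi(-\gamma F_0(\bm{x}_2))\,\dd\gamma=q^{-\alpha}\bm{1}_{|F_0(\bm{x}_2)|<q^\alpha}=0$ on $\supp\nu$ --- the same fact deployed at a different stage.
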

\begin{proof}
    After writing $k=Nb+a$, with $|b|=\hat{B}/|N|$ and $|a|<|N|$, it follows that 
    \begin{align*}
    \mathcal{Q} =&\int_{|\theta_1|, |\theta_2|\leq q^{4\tilde{A}}}\int_{K_\infty^6}w(\bm{x})\psi(\theta_1 F_0(\bm{x}_1)+\theta_2 F_0(\bm{x}_2)-a(\theta_1+\theta_2)P^{-3}) \\
    &\times \sum_{|b|=\hat{B}/|N|}\psi(-(\theta_1+\theta_2)bNP^{-3})\dd\bm{x}_1\dd \bm{x}_2\dd\theta_1\dd\theta_2.
    \end{align*}  
    The assumption $|N|<|P|^3q^{-4\tilde{A}}$ implies 
    \[
    |a(\theta_1+\theta_2)P^{-3}| < |N|q^{4\tilde{A}}|P|^{-3}<1,
    \]
    so that $\psi(a(\theta_1+\theta_2)P^{-3})=1$. Moreover, it follows from orthogonality of additive characters that 
    \[
    \sum_{|b|=\hat{B}/|N|}\psi(-(\theta_1+\theta_2)bNP^{-3}) = \frac{\hat{B}}{|N|}\left(q\mathbf{1}_{|\norm{(\theta_1+\theta_2)NP^{-3}}|<\frac{|N|}{q\hat{B}}}-\mathbf{1}_{|\norm{(\theta_1+\theta_2)NP^{-3}}|<\frac{|N|}{\hat{B}}}\right).
    \]
    We again have $|(\theta_1+\theta_2)NP^{-3}|<1$, so that $||(\theta_1+\theta_2)NP^{-3}||=|(\theta_1+\theta_2)NP^{-3}|$. After making the change of variables $\theta_2=-\theta_1+\gamma$ and $\theta=\theta_1$, it follows that 
     \begin{align*}
    \mathcal{Q}
     = \frac{\hat{B}}{|N|} \int_{K_\infty^6}\int_{|\theta|, |\gamma-\theta|\leq q^{4\tilde{A}}}
     w(\bm{x})\psi(\theta F(\bm{x})-\gamma F_0(\bm{x}_2))  \left(q\mathbf{1}_{|\gamma|<q^{-\alpha -1}}-\mathbf{1}_{|\gamma|<q^{-\alpha}}\right)\dd \gamma \dd \theta\dd \bm{x}, 
    \end{align*}
    where we applied the change of variables $\bm{x}=(\bm{x}_1, -\bm{x}_2)$. If $w(\bm{x})\neq 0$, then $|F_0(\bm{x}_2)|=q^\alpha$. Moreover, if $|\gamma|<q^{-\alpha}$ and $|\theta|\leq q^{4\tilde{A}}$, then $|\gamma-\theta|\leq q^{4\tilde{A}}$ holds automatically. This implies 
    \begin{align*}
        \int_{|\gamma|<q^{-\alpha}}\psi(-\gamma F_0(\bm{x}_2))\dd\gamma = q^{-\alpha}\mathbf{1}_{|F_0(\bm{x}_2)|<q^\alpha}=0. 
    \end{align*}
In addition, if $|\gamma|<q^{-1-\alpha}$, then $\psi(-\gamma F_0(\bm{x}_2))=1$ since $|F_0(\bm{x}_2)|=q^\alpha$ for $\bm{x}_2\in \supp \nu$. We therefore obtain 
\begin{align*}
    \mathcal{Q} &= \frac{q\hat{B}}{|N|}\int_{K_\infty^6}\int_{|\theta|\leq q^{4\tilde{A}}}w(\bm{x})\psi(\theta F(\bm{x}))\dd\theta \dd\bm{x}\vol(\{|\gamma|<q^{-1-\alpha}\})\\
    &= \frac{|P|^3}{|N|}\sigma_\infty    
\end{align*}
since $\hat{B}=|P|^3q^\alpha$.
\end{proof}
Let $\bm{y}\in K_\infty^3$ be such that $|\bm{y}|\leq q^{\tilde{A}}$ and $\bm{z}\in \TT^3$. If we set  $\bm{x}=\bm{y}+t^{\alpha -2\tilde A}\bm{z}$,
then a straightforward computation shows that $|F(\bm{x})|=q^\alpha$ if and only if $|F(\bm{y})|=q^\alpha$. Similarly $\nu(\bm{x})\neq 0$ holds if and only if $\nu(\bm{y})\neq 0$. In particular, $\nu$ is invariant under translation by $t^{\alpha -2\tilde{A}}\TT^3$. If we define 
\begin{equation}\label{eq:RA}
    R_{A,\alpha}=\left\{\bm{y}\in (t^{\tilde{A}+1}\TT/t^{\alpha -2\tilde{A}}\TT)^3\colon \begin{array}{l}
|F_0(\bm{y})|=q^\alpha, \\
1\leq |y_1|=|y_2|\leq q^{\tilde{A}}, \\
|y_2|q^{-1}\le |y_3|\leq |y_2|\end{array}\right\},
\end{equation}
we thus have the well-defined identity
\begin{equation}\label{Eq: DecompWeight}
    \nu(\bm{x})=\sum_{\bm{y}\in R_{A,\alpha}}\mathbf{1}_{|\bm{x}-\bm{y}|<q^{\alpha -2\tilde A}}.
\end{equation}
\begin{lemma}\label{Le: Avg.nu.singser}
    Let $N\in \OK$, $B=3d+\alpha\geq 0$ and suppose that $|N|<|P|q^{-6\tilde{A}}$. If $\bm{b}\in \OK^3$ is such that $|\bm{b}|<|N|$, then 
    \[
    \sum_{\bm{y}\equiv \bm{b} \bmod N}\nu(\bm{y}/P)\sigma_{\infty,A}(F_0(\bm{y}))= \sigma_\infty \frac{|P|^3}{|N|^3}.
    \]
\end{lemma}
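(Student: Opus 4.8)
The plan is to mimic the proof of Lemma~\ref{Le: Avg.singser.squared}, but with one copy of $\sigma_{\infty,A}$ replaced by the weight $\nu$ attached to the point $\bm{y}$, so that the $\theta$-integral now couples a genuine $6$-variable Fermat form $F$ to a $3$-variable slice. First I would unfold $\sigma_{\infty,A}(F_0(\bm{y}))$ using \eqref{eq:real} with dummy variables $\bm{x}_2\in K_\infty^3$ and phase $\theta$, writing the left-hand side as
\[
\sum_{\bm{y}\equiv \bm{b}\bmod N}\nu(\bm{y}/P)\int_{|\theta|\leq q^{4\tilde A}}\psi(-\theta F_0(\bm{y})P^{-3})\int_{K_\infty^3}\nu(\bm{x}_2)\psi(\theta F_0(\bm{x}_2))\dd\bm{x}_2\dd\theta .
\]
The next step is to get the sum over $\bm{y}$ into a shape where Poisson/orthogonality applies. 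Using the translation-invariance identity \eqref{Eq: DecompWeight}, I would replace $\nu(\bm{y}/P)$ by $\sum_{\bm{u}\in R_{A,\alpha}}\mathbf 1_{|\bm{y}/P-\bm{u}|<q^{\alpha-2\tilde A}}$, i.e.\ by the indicator that $\bm{y}$ lies in a fixed coset of $P t^{\alpha-2\tilde A}\TT^3$; equivalently I would substitute $\bm{y}=P\bm{u}+\bm{v}$ with $\bm{v}$ ranging over a box of radius $|P|q^{\alpha-2\tilde A}$, intersected with the congruence $\bm{y}\equiv\bm{b}\bmod N$. Since the size parameter $|N|<|P|q^{-6\tilde A}$ is much smaller than the box radius $|P|q^{\alpha-2\tilde A}$ (as $\alpha\ge 0$ and $\tilde A\ge 0$), the congruence $\bm{v}\equiv \bm{b}-P\bm{u}\bmod N$ is a genuine constraint and the number of admissible $\bm{y}$ per coset is exactly $(|P|q^{\alpha-2\tilde A}/|N|)^3$ times a completion over all residues mod $N$. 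More usefully, I would keep the sum and apply orthogonality of additive characters to the $\bm{v}$-sum: as in the previous lemma, $\sum_{\bm v}\psi(-\theta F_0(\cdot)P^{-3}+\cdots)$ collapses because $|\theta F_0(\bm y)P^{-3}|<1$ for $\bm y$ in the support (here $|F_0(\bm y)|\le q^{3\tilde A+\alpha}|P|^3$ roughly, and $|\theta|\le q^{4\tilde A}$, and one checks the product stays $<|P|^3$ — this is exactly where the hypothesis $|N|<|P|q^{-6\tilde A}$ and $A\ge 2$ are used), so $\psi(-\theta F_0(\bm y)P^{-3})$ can be pulled out and Taylor-expanded/frozen, leaving a clean orthogonality sum that picks out the main term.

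More precisely, after the substitution $\bm{y}=P\bm{u}+\bm v$ I expect the phase to become $\psi(\theta F_0(\bm u + \bm v/P))$, and since $\nu$ is invariant under $\bm v$-translation by $Pt^{\alpha-2\tilde A}\TT^3$, only the congruence condition mod $N$ survives; orthogonality over the $\bm v$ in the box then contributes the factor $|P|^3 q^{3(\alpha-2\tilde A)}/|N|^3$ together with an indicator forcing $|\theta|$ small — small enough that $\psi(\theta F_0(\bm v/P)\cdots)=1$ and the $\theta$-integral is unobstructed. I would then recognise what remains: after the dust settles the double integral over $(\theta,\bm{x}_2)$ together with the surviving $\bm{u}$-sum (which reassembles $\nu(\bm x_1)$ via \eqref{Eq: DecompWeight} run backwards) is precisely
\[
\int_{|\theta|\le q^{4\tilde A}}\int_{K_\infty^6} w(\bm x)\psi(\theta F(\bm x))\dd\bm x\dd\theta = \sigma_\infty,
\]
up to the explicit volume factors, and bookkeeping the powers of $q$ (using $\hat B=|P|^3 q^\alpha$ exactly as at the end of Lemma~\ref{Le: Avg.singser.squared}) yields $\sigma_\infty |P|^3/|N|^3$.

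\textbf{Main obstacle.} The delicate point is the interplay between the three different scales — the congruence modulus $|N|$, the translation-invariance scale $|P|q^{\alpha-2\tilde A}$ of $\nu$, and the dual range $q^{4\tilde A}$ of $\theta$ — and verifying that under $|N|<|P|q^{-6\tilde A}$ all the relevant products land on the correct side of $1$ so that the additive characters genuinely factor and no secondary terms appear. In Lemma~\ref{Le: Avg.singser.squared} there was a genuine secondary term (the $\mathbf 1_{|\gamma|<q^{-\alpha}}$ piece) that happened to vanish because $|F_0(\bm x_2)|=q^\alpha$ on the support; here I would expect an analogous near-miss term and would need to check it is killed by the same support constraint $|F_0(\bm y)|=q^\alpha$ (equivalently $|\gamma|<q^{-\alpha}$ forces $\psi(\gamma F_0)\ne 1$ averages to zero). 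Once that cancellation is confirmed, the rest is the volume bookkeeping already rehearsed in the previous two lemmas.
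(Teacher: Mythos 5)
Your overall plan—unfold $\sigma_{\infty,A}$, use the box decomposition \eqref{Eq: DecompWeight}, and exploit the size hypothesis on $|N|$—is pointing in the right direction, but the mechanism you invoke to evaluate the $\bm{y}$-sum does not work. In Lemma~\ref{Le: Avg.singser.squared} orthogonality applies because the phase $\psi(-(\theta_1+\theta_2)bNP^{-3})$ is \emph{linear} in the summation variable $b$; the geometric series collapses to a box indicator on $\theta_1+\theta_2$. Here the phase $\psi(-\theta F_0(\bm{y})P^{-3})$ is \emph{cubic} in $\bm{y}$, and your claim that it ``can be pulled out and Taylor-expanded/frozen'' because ``$|\theta F_0(\bm{y})P^{-3}|<1$'' is simply false on the support of $\nu(\cdot/P)$: there $|F_0(\bm{y})P^{-3}|=q^\alpha$ exactly, so $|\theta F_0(\bm{y})P^{-3}|$ is as large as $q^{4\tilde A+\alpha}\gg 1$. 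The Taylor expansion of $F_0(\bm{u}+\bm{v}/P)$ about a box centre $\bm{u}\in R_{A,\alpha}$ has quadratic and cubic contributions in $\bm{v}$ whose phases are genuinely oscillatory at the $\theta$-scales in play, so no direct ``orthogonality over $\bm{v}$ in the box'' yields a clean count times an indicator on $|\theta|$. There is also no ``near-miss term'' analogous to the $\mathbf 1_{|\gamma|<q^{-\alpha}}$ piece of Lemma~\ref{Le: Avg.singser.squared}; the structure here is different.

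The missing idea is Poisson summation over the full lattice $\{\bm{y}\equiv\bm{b}\bmod N\}$, \emph{before} touching the box decomposition. This converts the cubic phase sum into $\frac{|P|^3}{|N|^3}\sum_{\bm v\in\OK^3}\psi(-\bm{b}\cdot\bm{v}/N)\,I(\theta,\bm{v})$ with $I(\theta,\bm{v})=\int_{K_\infty^3}\nu(\bm{y})\psi(\theta F_0(\bm{y})+P\bm{y}\cdot\bm{v}/N)\dd\bm{y}$. The $\bm{v}=\bm 0$ frequency produces $\sigma_\infty |P|^3/|N|^3$ on the nose. For $\bm{v}\neq\bm 0$, you then decompose $\nu$ via \eqref{Eq: DecompWeight} and apply the oscillatory-integral vanishing criterion \cite[Lemma~5.1]{BGW2024positive} to each box: $I(\theta,\bm{v})=0$ unless the linear frequency $|P|q^{\alpha-2\tilde A}|N|^{-1}|\bm{v}|$ is dominated by $\max\{1,|\theta|H_G\}\le q^{\alpha+4\tilde A}$, and the hypothesis $|N|<|P|q^{-6\tilde A}$ rules this out. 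Note also that here, unlike for a linear phase, it is this stationary-phase lemma (not elementary character orthogonality) that kills the nonzero dual frequencies; without it, your argument has no tool to control the cubic oscillation in $\bm{v}$.
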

\begin{proof}
    An application of the Poisson summation formula yields 
    \begin{align*}
        \sum_{\bm{y}\equiv \bm{b} \bmod N}&\nu(\bm{y}/P)\sigma_{\infty,A}(F_0(\bm{y})) 
        \\
        &= \frac{|P|^3}{|N|^3}\int_{|\theta|\leq q^{4\tilde{A}}}\int_{K_\infty^3}\nu(\bm{x})\psi(\theta F_0(\bm{x}))\sum_{\bm{v}\in \OK^3}\psi\left(-\frac{\bm{b}\cdot \bm{v}}{N}\right) I(\theta, \bm{v})\dd \bm{x}\dd\theta,
    \end{align*}
        where we have temporarily written
    \[
    I(\theta,\bm{v})=\int_{K_\infty^3}\nu(\bm{y})\psi\left(\theta F_0(\bm{y})+\frac{P \bm{y}\cdot \bm{v}}{N}\right)\dd \bm{y}.
    \]
    The contribution from $\bm{v}=\bm{0}$ is clearly
    \begin{align*}
    \frac{|P|^3}{|N|^3}\int_{|\theta|\leq q^{4\tilde{A}}}\int_{K_\infty^3}\nu(\bm{x})\psi(\theta F_0(\bm{x}))I(\theta,\bm{0})\dd\bm{x}\dd\theta
    &= \frac{|P|^3}{|N|^3}\sigma_\infty.
    \end{align*}   

    Using \eqref{Eq: DecompWeight} to decompose the weight function $\nu$ into smaller boxes and applying a change of variables, we obtain
    \[
    I(\theta,\bm{v})=q^{3(\alpha-2\tilde{A})}\sum_{\bm{y}\in R_{A,\alpha}}\psi\left(\frac{P\bm{y}\cdot\bm{v}}{N}\right)\int_{\TT^3}\psi\left(\theta F_0(\bm{y}+t^{\alpha -2\tilde{A}}\bm{z})+\frac{Pt^{\alpha - 2\tilde{A}} \bm{z}\cdot \bm{v}}{N}\right)\dd\bm{z}.
    \]
    We can now use \cite[Lemma 5.1]{BGW2024positive} to deduce that the inner integral vanishes for any $\bm{y}\in R_{A,\alpha}$, and hence $I(\theta,\bm{v})=0$ unless $|P|q^{\alpha -2\tilde{A}}|N|^{-1}|\bm{v}|\leq \max\{1, |\theta|H_G\}$, where $$G(\bm{z})=F_0(\bm{y}+t^{\alpha -2\tilde{A}}\bm{z}).
    $$ 
    Since $|F_0(\bm{y})|=q^\alpha$, one readily checks that $H_G=q^\alpha$. Consequently, if $\bm{v}\neq \bm{0}$ then $I(\theta,\bm{v})$ can only be non-zero if 
    \[
    |P|q^{\alpha -2\tilde{A}}\leq |N|\max\{1,q^\alpha |\theta|\} \leq |N|q^{\alpha+4\tilde{A}},
    \]
    which is impossible by our assumption on $|N|$.
\end{proof}
For any $r\in \OK$, define 
\[
\tilde{\rho}(r)=|r|^{-5}\#\{\bm{x}\in (\OK/r\OK)^6\colon F(\bm{x})\equiv 0 \bmod r\}.
\]
Recalling the definition \eqref{eq:local} of 
$l_A(k;M)$, 
it is now easy to call upon the  previous two lemmas to evaluate $\Sigma_2$ and $\Sigma_3$. 
Firstly, by the definition of $\nu$ we have 
\begin{align*}
    \Sigma_2 & = \sum_{\substack{\bm{x}\in \OK^3}}\nu(\bm{x}/P)l_A(F_0(\bm{x}); M)\\
    & = \sum_{\bm{b}\bmod N}\tilde{\rho}(N,F_0(\bm{b}))\sum_{\substack{\bm{x}\equiv \bm{b} \bmod N}}\nu(\bm{x}/P)\sigma_{\infty, A}(F_0(\bm{x})).
\end{align*}
On assuming that $|N|<|P|q^{- 6\tilde{A}}$ and appealing to 
    Lemma \ref{Le: Avg.nu.singser}, it now follows that 
 $$
 \Sigma_2= \sigma_\infty |P|^3\sum_{\bm{b}\bmod N} |N|^{-5}\rho(N,F_0(\bm{b})),
$$
where
$\rho(N,k)\defeq |N|^2\tilde{\rho}(N,k)=\#\{(x,y,z)\in (\OK/N\OK)^3\colon x^3+y^3+z^3\equiv k\bmod{N}\}$. 
It readily follows
that 
\begin{align}
\label{Eq: Sigma2}    
\Sigma_2= \sigma_\infty \tilde{\rho}(N)|P|^3.
\end{align}
Similarly, Lemma~\ref{Le: Avg.singser.squared} gives 
\begin{align}
\begin{split}\label{Eq: Sigma3}
    \Sigma_3 & = \sum_{b \bmod N}\tilde{\rho}(N, b)^2 \sum_{\substack{|k|=\hat{B} \\ k\equiv b \bmod N}}\sigma_{\infty,A}(k)^2\\
    & = \sigma_\infty |P|^3 \sum_{b\bmod N}|N|^{-5}\rho(N, b)^2\\
    & = \sigma_\infty \tilde{\rho}(N)|P|^3,        
\end{split}
\end{align}
providing only that $|N|<|P|^3q^{-4\tilde{A}}$. 

Finally, assuming (R2) and Conjecture~\ref{CNJ:RA1} it follows from Theorem~\ref{Thm: Manin} that 
\begin{align}
\begin{split}\label{Eq: Sigma1}
    \Sigma_1 & = \sum_{|k|=\hat{B}}r_A(k)^2 \\
    &= N_w(P) \\
    &= \sigma_\infty \Sing |P|^3 +\sum_{L\in \Upsilon}\sum_{\bm{x}\in L\cap \OK^6}w(\bm{x}/P)+o_w(|P|^3)  ,      
\end{split}
\end{align}
as $|P|\to \infty$. 
Before completing the proof of Theorem~\ref{THM:main-intro}, we need two more auxiliary results.
\begin{lemma}\label{Le: NumberPtsLinSpace}
    For any $L\in \Upsilon$, we have 
    \[
    \sum_{\bm{x}\in L\cap\OK^6}w(\bm{x}/P)\ll A|P|^3.
    \]
\end{lemma}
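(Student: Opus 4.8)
The plan is to parametrise the points in $L \cap \OK^6$ explicitly and count those with $w(\bm{x}/P) \neq 0$, exploiting the support constraints built into $w$. Since $L$ is a $3$-dimensional $\FF_q(t)$-vector space defined over $\FF_q$ on which $F$ vanishes identically, I can choose a basis $\bm{e}_1, \bm{e}_2, \bm{e}_3 \in \OK^6$ of $L$; then every $\bm{x} \in L \cap \OK^6$ is of the form $\bm{x} = \lambda_1 \bm{e}_1 + \lambda_2 \bm{e}_2 + \lambda_3 \bm{e}_3$ for suitable $\bm{\lambda} \in K_\infty^3$ (in fact in a fixed lattice commensurable with $\OK^3$, with index depending only on $L$, hence on $q$ since $\Upsilon$ is finite). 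The key observation is that the condition $w(\bm{x}/P) \neq 0$ forces $|\bm{x}| \leq q^{\tilde{A}}|P| \ll_A |P|$ via the constraints $|x_i| = |x_{i'}| \asymp |P|$ appearing in $\nu$; this confines $\bm{\lambda}$ to a box of the form $|\bm{\lambda}| \ll_{q,L} |P|$, giving $O_L(|P|^3)$ lattice points, and hence $O_q(|P|^3)$ points in total after summing over the finitely many $L \in \Upsilon$.

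To extract the linear dependence on $A$, rather than just a bound $O_{A}(|P|^3)$, I would look more carefully at the weight. Recall from the Remark after Definition~\ref{DEFN:A-skew-weights} that $w$ demands $1 \leq |x_1| = |x_2| \leq q^{\alpha/3 + A}$ — so the ``radial'' degrees of freedom in $\bm{x}/P$ are only $O(A)$ many scales, while the constraint $|x_1^3 + x_2^3 + x_3^3| = q^\alpha$ cuts down one further degree of freedom. The clean way to see the factor $A$: decompose $\sum_{\bm{x} \in L \cap \OK^6} w(\bm{x}/P)$ according to the common value $\deg(x_1) = \deg(x_2) = j \in \{\deg P, \dots, \deg P + \tilde{A}\}$ and similarly $\deg(x_4) = \deg(x_5) = j'$; for each fixed pair $(j, j')$, I claim the number of contributing $\bm{x} \in L \cap \OK^6$ is $O_q(|P|^3 q^{-\delta})$ for a suitable saving, or at worst $O_q(|P|^3)$ uniformly, so that summing over the $O(\tilde{A}^2) = O(A^2)$ pairs is too lossy — so instead I would organise the count so that only $O(A)$ strata can each contribute $O(|P|^3 / A)$... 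Let me reconsider: the honest route is that within $L$, which is $3$-dimensional, fixing the scale of $x_1$ (equivalently the scale of one basis coordinate) already pins down the magnitude of $\bm{x}$ up to a bounded factor, so the contributing points with $\deg(x_1) = j$ number $O_q(|P|^3)$ total \emph{summed over all $j$}, not per $j$; the sum over the $O(A)$ admissible values of $j$ then contributes the factor $A$ honestly, giving $O_q(A |P|^3)$.

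Concretely, the steps I would carry out are: (1) Fix $L \in \Upsilon$ and a basis; reduce to counting $\bm{\lambda}$ in a fixed lattice $\Lambda_L \subset K_\infty^3$ of bounded covolume with $w((\lambda_1\bm{e}_1 + \lambda_2\bm{e}_2 + \lambda_3\bm{e}_3)/P) \neq 0$. (2) Show the support conditions force each coordinate $\lambda_i$ into a ball of radius $O_{q,L}(|P|)$, so naively $O_{q,L}(|P|^3)$ points; this already gives $O_q(|P|^3)$ after summing over $\Upsilon$. (3) To sharpen the $q$-dependence to $O_q(A|P|^3)$: stratify by $j = \deg(x_1) \in \{d, d+1, \dots, d + \tilde{A}\}$ (using $P = t^d$), observe that on $L$ a point is essentially determined up to $O_q(|P|^3/\tilde{A})$ choices once its overall scale is fixed — more precisely, since $L$ contains no coordinate subspace trivially (as $F$ vanishes on it and $F$ restricted to any single coordinate axis is $x_i^3 \neq 0$), the projection to the $x_1$-coordinate is nonzero, and a scale-$q^j$ constraint there carves out $O_q(q^{3j - 3d})$ points; summing $\sum_{j=d}^{d+\tilde{A}} q^{3(j-d)} \ll q^{3\tilde{A}} \ll_q |P|^0$... the geometric series is dominated by its top term, which reintroduces the full $|P|^3$ but only once, and the number of strata $\tilde{A}+1 = O(A)$ multiplies a \emph{subdominant} remainder. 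The cleanest statement is: the points with $\deg(x_1) = j$ contribute $O_q(q^{3(j-d)})$, but we also have the competing upper bound $O_q(|P|^{3})$ from the other factor's scale $j'$, and balancing gives that each of the $O(A)$ strata contributes $O_q(|P|^3/A \cdot \text{something})$...

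I expect the main obstacle to be precisely pinning down the factor $A$ rather than a crude $O_A(|P|^3)$ or $O_q(|P|^3)$ — one must use that the $2\tilde{A} = O(A)$ admissible combined scales for the pair $(x_1, x_4)$ each contribute only an $O(|P|^3/A)$ share, which requires genuinely using that $L$ is a \emph{product-compatible} subspace (it splits $F = F_0 \oplus F_0$, so $L$ is a graph-type subspace and its points satisfy a pair of magnitude constraints that together with the lattice structure leave only $|P|^3$ total mass spread across $A$ scales). I would handle this by: writing $\bm{x} = (\bm{x}_1, \bm{x}_2)$, noting $F_0(\bm{x}_1) = -F_0(\bm{x}_2)$ on $L$, and using the classification of such $L$ (each is cut out by linear forms with $\FF_q$-coefficients, so on $L$ the six coordinates are $\FF_q$-linear combinations of three of them) to reduce to a count over $\OK^3$ weighted by a function supported on $|\bm{u}| \asymp |P|$ in each of finitely many patterns, where the $A$-dependence is visibly the number of dyadic-type scales in the $\nu$ support.

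$$
\sum_{\bm{x}\in L\cap\OK^6}w(\bm{x}/P) = \sum_{j=d}^{d+\tilde{A}}\ \sum_{\substack{\bm{x}\in L\cap\OK^6\\ \deg x_1 = j}} w(\bm{x}/P) \ll_q \sum_{j=d}^{d+\tilde{A}} \frac{|P|^3}{\tilde{A}+1} \ll_q A|P|^3,
$$

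with the per-stratum bound $O_q(|P|^3/(\tilde{A}+1))$ coming from the fact that on the $3$-dimensional space $L$, a point of bounded height $O_q(|P|)$ with one coordinate's scale fixed to $q^j$ leaves only $O_q(|P|^3/(\tilde A +1))$ freedom once we also impose the magnitude conditions on $\bm{x}_2$, which are incompatible for all but an $O(1/(\tilde A+1))$-fraction of the scale range — this last incompatibility being the technical heart, to be verified by the explicit parametrisation of the $L \in \Upsilon$.
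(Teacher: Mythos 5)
Your step~(2) asserts that the support of $w$ confines the lattice parameters to a box of radius $O_{q,L}(|P|)$, and hence yields $O_q(|P|^3)$ points outright. That is not correct: the conditions in Definition~\ref{DEFN:A-skew-weights} only force $|\bm{x}|\leq q^{\tilde A}|P|$, so the naive lattice-point count on the $3$-dimensional $L$ is $\asymp q^{3\tilde A}|P|^3$, which is \emph{exponential} in $A$, not $O_q(|P|^3)$. So the trivial bound is far from sufficient and the entire content of the lemma is in recovering the linear factor $A$; your step~(2) does not ``already give'' anything close.

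Your step~(3) is then the crux, but the per-stratum estimate on which your final display rests --- that the points of $L\cap\OK^6$ with $\deg x_1=j$ in the support of $w$ number $O_q(|P|^3/(\tilde A+1))$ --- is asserted, not proved, and I do not think it is true. The correct per-stratum count is $O_q(|P|^3)$ (uniformly in $j$), and the factor $A$ then comes honestly from summing over the $O(\tilde A)$ admissible strata; you have the bookkeeping inverted. Moreover, the actual mechanism producing the $j$-uniform $O(|P|^3)$ bound is absent from your argument. After reducing $L$ to one of two canonical $\FF_q$-linear normal forms (a ``graph'' $x_{i}=-x_{i+3}$, or a ``product'' pairing $x_1=-x_2$, $x_3=-x_4$, $x_5=-x_6$), one must observe that in the graph case, with $|x_1|=|x_2|=q^b|P|$, the constraint $|x_1^3+x_2^3+x_3^3|=q^\alpha|P|^3$ forces $x_3$ to lie within distance $\ll q^{\alpha+2-2b}|P|$ of a cube root of $-(x_1^3+x_2^3)$ in $K^{\mathrm{sep}}$ (when $x_1^3+x_2^3\neq 0$). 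This localisation of $x_3$ exactly cancels the $q^{2b}$ gain in the count of $(x_1,x_2)$, yielding $O_q(|P|^3)$ per $b$; nothing in your sketch plays this role. The degenerate subcase $x_1^3+x_2^3=0$ and the product-type $L$ both contribute only $O(|P|^3)$ and must be handled separately. Your appeal to an unspecified ``incompatibility for all but an $O(1/(\tilde A+1))$-fraction of the scale range'' is exactly the gap.
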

\begin{proof}
    By applying a suitable $\FF_q$-linear change of variables we may reduce to the case where $L$ is given by
    \begin{enumerate}
        \item $x_1+x_4=x_2+x_5=x_3+x_6=0$, or
        \item $x_1+x_2=x_3+x_4=x_5+x_6=0$. 
    \end{enumerate}
    We begin with (1). Since $x_1,x_2,x_3$ determine $x_4,x_5,x_6$ uniquely, it suffices to count $\bm{x}\in\OK^3$ for which $\nu(\bm{x}/P)\neq 0$. Let $1 \leq b \leq \tilde{A}$ and suppose that $|x_1|=|x_2|=q^{b}|P|$. Writing $N=x_1^3+x_2^3$, we must then have $|x_1|=|x_2|\leq |x_3|q$ and
    \begin{equation}\label{Eq: N+x3=0}
    |N+x_3^3|=q^{\alpha}|P|^3,
    \end{equation}
    if $\nu(\bm{x}/P)\neq 0$. If $N=0$, then there are $O(q^{b}|P|)$ choices for $(x_1,x_2)$, while $|x_3^3|=q^{\alpha}|P|^3$ implies $|x_3|=q^{\alpha/3}|P|$, which in turn implies  that
    $|x_1|=|x_2|\leq q^{\alpha/3+1}|P|$, whence $b\leq \alpha/3+1$. Thus the total contribution from this case is $O(|P|^2)$.

    Now let us assume that $N\neq 0$ and fix pairwise distinct solutions $\beta_1,\beta_2,\beta_3\in K^{\text{sep}}$ of the equation $x^3+N=0$. The absolute value $|\cdot |$ extends uniquely to $K^{\text{sep}}$, so that \eqref{Eq: N+x3=0} gives 
    \[
    |(x_3-\beta_1)(x_3-\beta_2)(x_3-\beta_3)|=q^{\alpha}|P|^3.
    \]
    As $|\beta_i-\beta_j|\gg 1$ for $i\neq j$, we have $|x_3-\beta_i|\gg |x_3|\geq q^{b-1}|P|$ for at least two indices $i\in \{1,2,3\}$. Without loss of generality, assume that it holds for $i=2,3$. We then get 
    \[
    |x_3-\beta_1|\ll q^{\alpha+2-2b}|P|,
    \]
    for which there are $O(q^{\alpha+2-2b}|P|)$ possible $x_3\in \OK$. As there are $O(q^{2b}|P|^2)$ choices for $x_1,x_2\in \OK$, this yields after summing over $b$ an overall contribution of $O(A|P|^3)$, which completes case (1). 

    For (2), observe that if $\nu(\bm{x}/P)\neq 0$ and $x_1+x_2=0$, then we must have $|x_3|=q^{\alpha/3}|P|$ and $|x_1|=|x_2|\leq q^{\alpha/3+1}|P|$. In particular, once $x_3$ is fixed there are $O(q^{\alpha/3}|P|)$ choices for $x_1,x_2$. In addition, $x_3$ determines $x_4$ uniquely and by the symmetry at hand there are $O(q^{\alpha/3}|P|)$ available $x_4,x_5\in \OK$. Therefore, the total contribution is $O(q^{\alpha}|P|^3)$, which is more than sufficient.
\end{proof}
\begin{lemma}\label{Le: ApproxLocDensSingSer}
    Let $N$ be as in \eqref{Eq: DefiN}. Then 
    $
    \tilde{\rho}(N)=\Sing +O(\hat{M}^{-2/3+\varepsilon})$,
where $\mathfrak{S}$ is given by \eqref{Eq: SingSer}.
\end{lemma}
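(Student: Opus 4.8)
The plan is to compare the finite product defining $\tilde{\rho}(N)$ with the full singular series $\Sing$ term by term, using the standard multiplicativity of both sides together with local bounds on $S_r(\bm{0})$. First I would record that, by the Chinese remainder theorem, $\tilde{\rho}(N) = \prod_{\varpi^e \| N} \tilde{\rho}(\varpi^e)$ and similarly $\Sing = \prod_{\varpi} \sigma_\varpi$, where $\sigma_\varpi \defeq \sum_{e \ge 0} |\varpi^e|^{-6} S_{\varpi^e}(\bm{0})$ is the local factor at $\varpi$; this requires knowing the Euler product expansion of $\Sing$, which is routine since $S_r(\bm{0})$ is multiplicative in $r$. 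Moreover, a $p$-adic density computation shows $\tilde{\rho}(\varpi^e) = \sum_{0 \le f \le e} |\varpi^f|^{-6} S_{\varpi^f}(\bm{0})$, i.e. the partial sum of the local factor $\sigma_\varpi$ truncated at exponent $e$.

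Next I would bound the tails. The key input is the bound $|S_r(\bm{0})| \ll |r|^{3+\varepsilon} \cdot |r|^{1/2}$ for the complete exponential sum attached to the nonsingular cubic form $F$ in six variables — more precisely the standard estimate $|S_r(\bm{0})| \ll_\varepsilon |r|^{7/2+\varepsilon}$ coming from Weil/Deligne bounds for the Fermat hypersurface away from its singular locus, so that $|\varpi^e|^{-6} S_{\varpi^e}(\bm{0}) \ll |\varpi^e|^{-5/2+\varepsilon}$. Hence each local factor $\sigma_\varpi$ converges, $\sigma_\varpi = 1 + O(|\varpi|^{-3/2+\varepsilon})$ (the $e=1$ term dominates the tail since $S_\varpi(\bm{0}) \ll |\varpi|^{7/2+\varepsilon}$ for the nonsingular reduction, but actually $S_\varpi(\bm{0}) \ll |\varpi|^{3}$ gives $|\varpi|^{-3}$, and more careful analysis — vanishing of the first-order term — yields the $-3/2$ saving used in Proposition~\ref{Prop: LocDenAvg}), and the Euler product $\Sing$ converges absolutely.

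Then the difference $\Sing - \tilde{\rho}(N)$ splits into two pieces: the primes $\varpi$ with $\deg \varpi > M$, which do not divide $N$ at all, contributing $\prod_{\deg\varpi > M}\sigma_\varpi - 1 \ll \sum_{\deg\varpi > M} |\varpi|^{-3/2+\varepsilon} \ll \hat{M}^{-1/2+\varepsilon}$; and the primes $\varpi$ with $\deg\varpi \le M$, where the exponent $e = \floor{M/\deg\varpi}$ in $N$ truncates $\sigma_\varpi$, costing $\sigma_\varpi - \tilde{\rho}(\varpi^e) = \sum_{f > \floor{M/\deg\varpi}} |\varpi^f|^{-6}S_{\varpi^f}(\bm{0}) \ll |\varpi|^{-(5/2-\varepsilon)(\floor{M/\deg\varpi}+1)} \ll \hat{M}^{-5/2+\varepsilon}$ per prime. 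Summing the latter over $\deg\varpi \le M$ and combining with the multiplicative structure (both $\tilde{\rho}(N)$ and $\Sing$ are $\gg 1$ bounded, by Proposition~\ref{Prop: LocDenAvg}-type reasoning, so $\log$-differences convert to differences), the dominant error is $\hat{M}^{-1/2+\varepsilon}$ from the missing large primes — but the claimed exponent is $-2/3+\varepsilon$, so in fact the paper must be using the sharper nonsingular bound $|S_\varpi(\bm{0})| \ll |\varpi|^{3}$ at all but finitely many $\varpi$ (giving $\sigma_\varpi = 1 + O(|\varpi|^{-2})$ away from $\varpi \mid 3$ or $\varpi$ ramified, but the genuine saving at split primes where the cubic Gauss sum appears is only $|\varpi|^{-3/2}$). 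I expect the main obstacle to be pinning down the correct uniform local bound on $S_{\varpi^f}(\bm{0})$ — in particular checking that the worst primes (those $\equiv 1 \bmod 3$, where $S_\varpi(\bm{0})$ involves the cubic Gauss sum and is genuinely of size $|\varpi|^{7/2}$) still yield a tail small enough to produce the stated $O(\hat{M}^{-2/3+\varepsilon})$; this likely needs the observation that such primes have density only $1/2$ and that the relevant sum $\sum_{\deg\varpi > M}|\varpi|^{-3/2+\varepsilon}$ is what governs things, after which one optimizes the exponent. Everything else is a bookkeeping exercise with convergent Euler products.
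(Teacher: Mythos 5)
Your opening moves are the right ones: by the Chinese remainder theorem and the standard $p$-adic density identity, $\tilde{\rho}(\varpi^e)=\sum_{0\le f\le e}|\varpi^f|^{-6}S_{\varpi^f}(\bm{0})$, and therefore
\[
\tilde{\rho}(N)=\sum_{r\mid N}|r|^{-6}S_r(\bm{0}).
\]
This is exactly what the paper does. But from here the paper uses a single structural observation you never state: by the choice of exponents $\lfloor M/\deg\varpi\rfloor$ in \eqref{Eq: DefiN}, \emph{every} monic $r$ with $\deg r\le M$ divides $N$. Consequently the error is literally a tail of the absolutely convergent singular series,
\[
|\tilde{\rho}(N)-\Sing|\le \sum_{|r|>\hat{M}}|r|^{-6}|S_r(\bm{0})|,
\]
and this is $\ll\hat{M}^{-2/3+\varepsilon}$ by \cite{BGW2024positive}*{Lemma~9.2} (applied after renormalising, $S_r^\natural(\bm{0})=|r|^{-7/2}S_r(\bm{0})$). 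That is the entire proof; no Euler-product telescoping, no prime-by-prime comparison.

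Your detour through local factors $\sigma_\varpi$ and log-differences is where the proposal goes off the rails. First, your bounds on the local factor are internally inconsistent (you cycle through $|\varpi|^{-5/2+\varepsilon}$, $|\varpi|^{-3/2+\varepsilon}$, $|\varpi|^{-3}$, and $|\varpi|^{-2}$ within one paragraph), and the number you settle on, $\sigma_\varpi-1\ll|\varpi|^{-3/2+\varepsilon}$, leads you to $\hat{M}^{-1/2+\varepsilon}$, which you correctly observe is weaker than the target $\hat{M}^{-2/3+\varepsilon}$. Second, and more importantly, the closing speculation misidentifies the source of the $-2/3$. It does not come from split primes or from a more careful treatment of $S_\varpi(\bm{0})$ at primes $\varpi$; it comes from cube-full moduli $r$, for which the lack of square-root cancellation in the one-variable sum $\sum_{x\bmod\varpi^{3m}}\psi(ax^3/\varpi^{3m})$ forces $|r|^{-6}|S_r(\bm{0})|\asymp|r|^{-1}$ rather than $|r|^{-5/2+\varepsilon}$. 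Since there are roughly $q^{d/3}$ cube-full monics of degree $d$, the tail over $|r|>\hat{M}$ is of order $\hat{M}^{-2/3}$, and this is what Lemma~9.2 of \cite{BGW2024positive} controls. Recovering this through an Euler-product / log-difference expansion is possible in principle, but it would require tracking the residue of the truncation exponent $\lfloor M/\deg\varpi\rfloor$ modulo $3$ prime by prime; it is not the routine bookkeeping exercise your last sentence promises, and you have not carried it out. So there is a genuine gap: you have all the raw identities in hand but never reduce to a citable tail estimate, and the bounds you do produce fall short of the stated exponent.
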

\begin{proof}
    Using orthogonality of characters and collecting terms according to their greatest common divisor, for any prime power $\varpi^k$ we have 
    \begin{align*}
    \tilde{\rho}(\varpi^k)&=   |\varpi|^{-6k}\sum_{a\bmod \varpi^k}\sum_{\bm{x}\bmod \varpi^k}\psi\left(\frac{aF(\bm{x})}{\varpi^k}\right)\\
    &= \sum_{l=0}^k|\varpi|^{-6(k-l)}S_{\varpi^{k-l}}(\bm{0}),
    \end{align*}
    in the notation of \eqref{eq:sun}.
    Thus the Chinese remainder theorem yields 
    \[
    \tilde{\rho}(N)= \sum_{r\mid N}|r|^{-6}S_r(\bm{0}).
    \]
    It follows from the definition of $N$ that any $r\in \Omon$ with $\deg r\leq M$ divides $N$. Therefore
    \begin{align*}
        |\tilde{\rho}(N)-\Sing|&\leq \sum_{|r|> \hat{M}}|r|^{-6}|S_r(\bm{0})|
        \ll \hat{M}^{-2/3+\varepsilon}
    \end{align*}
    by Lemma~9.2 of \cite{BGW2024positive}, noting that 
    $S_r^\natural(\bm{0})=|r|^{-7/2}S_r(\bm{0})$ in this result.
\end{proof}

\begin{proof}[Proof of Theorem~\ref{THM:main-intro}] 
Recall the definition \eqref{def:SA} of $S_A$.
Our goal is to show that 
\[
\liminf_{X\to\infty}\left(\frac{\#\{k\in S_A\colon |k|\leq \hat{X}\}}{q\hat{X}}\right) \to 1, 
\]
as $A\to \infty$.
Fix $A>0$ to be sufficiently large and suppose $0<\delta<1/2$. Let $B\in \NN$ be such that $X/2\leq B\leq X$ and put $B=3d+\alpha$ for $\alpha\in \{0,1,2\}$. Throughout this argument we may take $d$ to be sufficiently large. Let $P=t^d$ and 
choose $M\geq 1$ such that $|N|< |P|q^{-6\tilde{A}}$ in the notation of \eqref{Eq: DefiN}. Then 
\begin{align*}
    \#\{k\in \OK\setminus S_A\colon |k|=\hat{B}\} & \leq \#\left\{k\in \OK\setminus S_A\colon 
    \begin{array}{l}
    |k|=\hat{B},\\
    |r_A(k)-l_A(k,M)|\geq |l_A(k,M)|/2
        \end{array}
    \right\},
\end{align*}
where $l_A(k,M)$ is given by \eqref{eq:local}, since $k\in S_A$ if 
$r_A(k)>0$.
Now it follows from Proposition~\ref{Prop: LocDenAvg} that $\tilde{\rho}(N,k)\geq A^{-1/2+\delta}$ for all but $O(A^{-1/2+\delta}\hat{B})$ elements $k\in \OK$ with $|k|=\hat{B}$. As $\sigma_{\infty,A}(k)\gg A$ always holds, by Proposition~\ref{Prop: RealDensitylarge}, we thus have 
    \begin{align*} 
    \#\{k\in \OK\setminus S_A\colon |k|=\hat{B}\}
& \ll A^{-1/2+\delta}\hat{B} + \frac{\Var_A(B;M)}{A^2 A^{-1+2\delta}}.
\end{align*}
Writing $\Var_A(B;M)= \Sigma_1-2\Sigma_2+\Sigma_3$, as previously, 
we may combine \eqref{Eq: Sigma2}, \eqref{Eq: Sigma3} and \eqref{Eq: Sigma1} with Lemma~\ref{Le: ApproxLocDensSingSer} to get
\[
\Var_A(B;M)=\sum_{L\in \Upsilon}\sum_{\bm{x}\in L\cap\OK^6}w(\bm{x}/P)+o_w(|P|^3)+O(\sigma_\infty |P|^3\hat{M}^{-2/3+\varepsilon}).
\]
But  Lemma~\ref{Le: NumberPtsLinSpace} implies that  $\sum_{L\in \Upsilon}\sum_{\bm{x}\in L\cap \OK^6}w(\bm{x}/P)\ll A|P|^3$. 
We trivially have $\sigma_\infty=O_A(1)$ in 
\eqref{Eq: singInt}. 
Taking $M$ to satisfy $\sigma_\infty \hat{M}^{-2/3+\varepsilon}=o_A(1)$ as $|P|\to \infty$ (which is allowed as $M$ can be chosen to tend to infinity as $|P|\to\infty$) and recalling that $\hat{B}=|P|^3q^{\alpha}$, we are led to the bound
\begin{align*}
    \#\{k\in \OK\setminus S_A\colon |k|=\hat B\} & \ll A^{-1/2+\delta}\hat{B}+\hat{B}A^{-2\delta }+o_{w,A}(|P|^3)
\end{align*}
as $|P|\to \infty$. Choosing $\delta =1/6$, summing over $X/2\leq B\leq X$ and estimating the contribution from $|k|< \hat{X}^{1/2}$ trivially, we conclude that
\[
\liminf_{X\to\infty}\left(\frac{\#\{k\in S_A\colon |k|\leq \hat{X}\}}{q\hat{X}}\right)\geq 1 + O (A^{-1/3}),
\]
as $A\to \infty$. 
\end{proof}

\section{Integral results}

Given 
 $\gamma \in K_\infty$ and  $\bm{w}\in K_\infty^n$, 
this section will be mainly  concerned with the integrals
\[
J_{F,\omega}(\gamma, \bm{w})= \int_{K_\infty^n}\omega(\bm{x})\psi(\gamma F(\bm{x})+\bm{w}\cdot \bm{x})\dd\bm{x}
\]
and 
\[
J^\Gamma_{F,\omega}(\bm{w})=\int_{|\gamma|<\widehat{\Gamma}}J_{F,\omega}(\gamma,\bm{w})\dd\gamma ,
\]
for $\Gamma\in \ZZ$ and suitable weight functions $\omega\colon K_\infty^n\to \CC$.

Fix $A$ and $\alpha$, and let $w=w_{A,\alpha}$ as in Definition~\ref{DEFN:A-skew-weights}.
All implied constants in this section will be allowed to depend on $A$ and $\alpha$.
First, we prove some continuity properties that
enhance \cite{BGW2024positive}*{Lemma~5.6}.

\begin{lemma}\label{LEM:integral-scale-invariance}
Suppose $\lambda_1,\lambda_2 \in K_\infty^\times$ are such that
$\lambda_1/\lambda_2\in 1+\TT$.
Then
\begin{equation*}
J^\Gamma_{F,w}(\lambda_1\bm{w})=J^\Gamma_{F,w}(\lambda_2\bm{w})
\quad\textnormal{for any $\bm{w}\in K_\infty^n$}.
\end{equation*}
Moreover, if $\bm{w}_1, \bm{w}_2\in K_\infty^n$ satisfy $|\bm{w}_1-\bm{w}_2|<q^{-1}(\max_{\bm{x}\in\supp(w)}|\bm{x}|)^{-1}$, then
\begin{equation}
\label{EQN:local-constancy}
J^\Gamma_{F,w}(\bm{w}_1)=J^\Gamma_{F,w}(\bm{w}_2).
\end{equation}
\end{lemma}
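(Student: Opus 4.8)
The plan is to prove both statements directly from the definitions of $J_{F,w}$ and $J^\Gamma_{F,w}$, exploiting the scaling invariance of the weight $w$ together with the change-of-variables behaviour of the additive character $\psi$ on $K_\infty$.

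\medskip

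\noindent\textbf{First claim.} Write $\lambda = \lambda_1/\lambda_2 \in 1+\TT$, so $\lambda \in \OK_\infty^\times$ with $|\lambda|=1$. In the integral $J_{F,w}(\gamma,\lambda_1\bm{w})=\int_{K_\infty^n}w(\bm{x})\psi(\gamma F(\bm{x})+\lambda_1\bm{w}\cdot\bm{x})\,\dd\bm{x}$, I would substitute $\bm{x}=\lambda^{-1}\bm{x}'$. Since $|\lambda|=1$ this is measure-preserving, and the linear term becomes $\lambda_1\bm{w}\cdot\lambda^{-1}\bm{x}'=\lambda_2\bm{w}\cdot\bm{x}'$. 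The obstacle is the cubic term: $F(\lambda^{-1}\bm{x}')=\lambda^{-3}F(\bm{x}')$, and $w(\lambda^{-1}\bm{x}')\ne w(\bm{x}')$ in general because $\nu$ involves sharp conditions like $|x^3+y^3+z^3|=q^\alpha$ and $1\le|x|=|y|\le q^{\alpha/3+A}$. However, $|\lambda|=1$ means $\lambda^{-1}\bm{x}$ and $\bm{x}$ have the same coordinatewise absolute values, so the size conditions in $\nu$ are preserved; and $|F(\lambda^{-1}\bm{x})|=|\lambda|^{-3}|F(\bm{x})|=|F(\bm{x})|$, so the condition $|x^3+y^3+z^3|=q^\alpha$ is preserved too. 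Hence $w(\lambda^{-1}\bm{x}')=w(\bm{x}')$. It remains to handle $\psi(\gamma\lambda^{-3}F(\bm{x}'))$: I would make the further substitution $\gamma=\lambda^3\gamma'$ in the outer integral; since $|\lambda^3|=1$ this preserves the measure $\dd\gamma$ and the region $|\gamma|<\widehat\Gamma$ maps to itself. After both substitutions the integral is literally $J^\Gamma_{F,w}(\lambda_2\bm{w})$. (One should carry out the $\gamma$-substitution at the level of $J^\Gamma$ rather than $J$, since it is only the $\gamma$-integrated object that is invariant.)

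\medskip

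\noindent\textbf{Second claim (local constancy).} Set $R=\max_{\bm{x}\in\supp(w)}|\bm{x}|$ and suppose $|\bm{w}_1-\bm{w}_2|<q^{-1}R^{-1}$. For any $\bm{x}\in\supp(w)$ we then have $|(\bm{w}_1-\bm{w}_2)\cdot\bm{x}|\le|\bm{w}_1-\bm{w}_2|\,|\bm{x}|<q^{-1}R^{-1}\cdot R=q^{-1}<1$, so $\psi((\bm{w}_1-\bm{w}_2)\cdot\bm{x})=1$ because $\psi$ is trivial on the valuation ring. Therefore $\psi(\gamma F(\bm{x})+\bm{w}_1\cdot\bm{x})=\psi(\gamma F(\bm{x})+\bm{w}_2\cdot\bm{x})$ pointwise on $\supp(w)$, whence $J_{F,w}(\gamma,\bm{w}_1)=J_{F,w}(\gamma,\bm{w}_2)$ for every $\gamma$, and integrating over $|\gamma|<\widehat\Gamma$ gives \eqref{EQN:local-constancy}.

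\medskip

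\noindent\textbf{Expected main obstacle.} The routine part is the character manipulation; the only genuinely delicate point is verifying in the first claim that the sharp support conditions defining $\nu_{A,\alpha}$ (and hence $w$) are genuinely invariant under multiplication by a unit $\lambda\in 1+\TT$ — i.e. confirming that every inequality appearing in Definition~\ref{DEFN:A-skew-weights}, including the boundary-type conditions $|y|q^{-1}\le|z|\le|y|$ and $|x^3+y^3+z^3|=q^\alpha$, depends only on the absolute values of the coordinates and of $F_0$, which are unchanged under a scaling by an element of absolute value $1$. Once that is in hand, the simultaneous substitution $\bm{x}\mapsto\lambda^{-1}\bm{x}$, $\gamma\mapsto\lambda^3\gamma$ closes the argument.
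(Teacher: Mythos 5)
Your proposal is correct, and its structure matches the paper's proof: the paper dispatches the first claim by citing \cite{BGW2024positive}*{Lemma~5.6} together with the observation that $w_{A,\alpha}(\lambda^{-1}\bm{y})$ depends only on $\lambda\bmod{1+\TT}$ (which follows because all conditions in Definition~\ref{DEFN:A-skew-weights} are stated in terms of absolute values, exactly as you argue), and handles the second claim by the same one-line computation you give. Your simultaneous substitution $\bm{x}\mapsto\lambda^{-1}\bm{x}$, $\gamma\mapsto\lambda^{3}\gamma$, performed at the level of $J^\Gamma$ rather than $J$, is precisely the content of the cited lemma, so you have written out the same argument in full rather than by reference. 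One small slip in wording: $\psi$ is \emph{not} trivial on the valuation ring $\set{\abs{\alpha}\le 1}$ (for instance $\psi(t^{-1})\ne 1$ under the standard normalisation in which $\psi(\alpha)$ depends on the $t^{-1}$-coefficient); rather, $\psi$ is trivial on $\set{\abs{\alpha}<q^{-1}}$. This is exactly why the hypothesis carries the extra factor $q^{-1}$, and your derived bound $\abs{(\bm{w}_1-\bm{w}_2)\cdot\bm{x}}<q^{-1}$ does land in the correct set, so the conclusion stands; only the stated justification should be tightened.
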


\begin{proof}
The first part of the lemma is proven just like \cite{BGW2024positive}*{Lemma~5.6},
using the fact that $w_{A,\alpha}(\lambda^{-1}\bm{y})$ depends only on $\lambda\bmod{1+\TT}$ by Definition~\ref{DEFN:A-skew-weights}.
For the second statement of the lemma, we simply observe that $|\bm{w}_1-\bm{w}_2|<q^{-1}(\max_{\bm{x}\in\supp(w)}|\bm{x}|)^{-1}$ implies that 
\[
\psi(\bm{w}_1\cdot \bm{x})= \psi(\bm{w}_2\cdot\bm{x})
\]
for all $\bm{x}\in\supp(w)$, so that the claim follows from the definition of $J^\Gamma_{F,w}(\bm{w})$. 
\end{proof}

We now extend \cite{BGW2024positive}*{Lemmas~5.5 and~5.8} to the weights $w=w_{A,\alpha}$.
We first place $w$ into the framework of \cite{BGW2024positive}*{\S~5}.
By \eqref{Eq: DecompWeight}, we have
\begin{equation*}
w(t^{\tilde{A}}\bm{x})
= \sum_{\bm{y}=(\bm{u},\bm{v})\in R_{A,\alpha}^2}
\mathbf{1}_{|t^{\tilde{A}}\bm{x}-\bm{y}|<q^{\alpha -2\tilde A}}
= \sum_{\bm{y}\in R_{A,\alpha}^2}
\mathbf{1}_{|\bm{x}-t^{-\tilde{A}}\bm{y}|<q^{\alpha -3\tilde A}}.
\end{equation*}
Moreover, it follows from 
\eqref{eq:RA} that  $\norm{t^{-\tilde{A}}\bm{y}} \le 1$ for all $\bm{y}\in R_{A,\alpha}^2$.
Let $\omega_{\bm{y}}(\bm{x}) \defeq \bm{1}_{\norm{\bm{x}-t^{-\tilde{A}}\bm{y}}<q^{\alpha-3\tilde A}}$.
Then
\begin{equation}
\begin{split}
\label{box-integrals}
J_{F,w}(\gamma, \bm{w})
&= \int_{K_\infty^n} w(t^{\tilde{A}}\bm{x})
\psi(\gamma F(t^{\tilde{A}}\bm{x})+\bm{w}\cdot t^{\tilde{A}}\bm{x})\abs{t^{\tilde{A}}}^n\dd\bm{x} \\
&= q^{n\tilde{A}} \sum_{\bm{y}\in R_{A,\alpha}^2}
J_{F,\omega_{\bm{y}}}(t^{3\tilde{A}}\gamma, t^{\tilde{A}}\bm{w}).
\end{split}
\end{equation}

\begin{lemma}
\label{LEM:w_A-is-nice}
For each $\bm{y}\in R_{A,\alpha}^2$,
the weight function $\omega_{\bm{y}}$ satisfies
\cite{BGW2024positive}*{Hypothesis~5.3}
with parameters $\bm{x}_0 = t^{-\tilde{A}}\bm{y}$ and $L = 3\tilde A-\alpha$.
\end{lemma}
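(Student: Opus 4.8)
The goal is to verify that each box weight $\omega_{\bm{y}}$, which is the indicator of a ball $\{\norm{\bm{x}-t^{-\tilde A}\bm{y}}<q^{\alpha-3\tilde A}\}$ in $K_\infty^n$ with $n=6$, fits the axioms of \cite{BGW2024positive}*{Hypothesis~5.3}. I would first recall exactly what that hypothesis asks for: a weight supported on a ball centred at some $\bm{x}_0$ of radius $q^{-L}$ (for an integer $L$), with $\norm{\bm{x}_0}\le 1$, and taking value~$1$ on that ball. So the proof is essentially an identification of parameters followed by a check of the inequalities.

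**Key steps.** First, read off the centre and radius: by construction $\omega_{\bm{y}}(\bm{x})=\bm{1}_{\norm{\bm{x}-t^{-\tilde A}\bm{y}}<q^{\alpha-3\tilde A}}$, so the centre is $\bm{x}_0=t^{-\tilde A}\bm{y}$ and the radius is $q^{-(3\tilde A-\alpha)}$, giving $L=3\tilde A-\alpha$. Second, check that $L$ is a nonnegative integer: $\tilde A$ and $\alpha$ are integers with $\tilde A=\lfloor \alpha/3+A\rfloor\ge A\ge 2$ and $\alpha\in\{0,1,2\}$, so $3\tilde A-\alpha\ge 6-2=4>0$; in particular the ball has radius strictly less than~$1$, as required. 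Third, verify the bound on the centre: this is exactly the observation already recorded in the excerpt, namely that \eqref{eq:RA} forces $\norm{t^{-\tilde A}\bm{y}}\le 1$ for every $\bm{y}\in R_{A,\alpha}$, hence for every $\bm{y}\in R_{A,\alpha}^2$. Fourth, note that $\omega_{\bm{y}}$ is the $\{0,1\}$-valued indicator of its support, so the normalisation and boundedness conditions of the hypothesis hold trivially, as does any smoothness/local-constancy requirement (an indicator of a ball is locally constant). Assemble these to conclude that \cite{BGW2024positive}*{Hypothesis~5.3} holds with $\bm{x}_0=t^{-\tilde A}\bm{y}$ and $L=3\tilde A-\alpha$.

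**Main obstacle.** There is no real analytic difficulty here; the only thing to be careful about is bookkeeping — matching the precise list of conditions in \cite{BGW2024positive}*{Hypothesis~5.3} (which is not reproduced in this excerpt) item by item, and making sure the integrality and sign of $L=3\tilde A-\alpha$ are justified from the running conventions $B=3d+\alpha$, $\tilde A=\lfloor\alpha/3+A\rfloor$, $A\ge 2$. So the proof I would write is short: identify $\bm{x}_0$ and $L$, cite the inequality $\norm{t^{-\tilde A}\bm{y}}\le 1$ coming from \eqref{eq:RA}, observe $L\ge 4>0$ is an integer, and remark that the remaining conditions are immediate because $\omega_{\bm{y}}$ is the indicator of a ball.
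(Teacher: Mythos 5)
Your proposal is correct and follows essentially the same route as the paper, which simply remarks that the lemma ``is clear by the definition of $R_{A,\alpha}$.'' You spell out the bookkeeping — identifying the centre $t^{-\tilde A}\bm{y}$, reading off $L=3\tilde A-\alpha$, checking $\norm{t^{-\tilde A}\bm{y}}\le 1$ via \eqref{eq:RA}, and verifying $L\ge 4>0$ from $\tilde A\ge A\ge 2$ — but these are exactly the observations the paper is implicitly invoking.
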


\begin{proof}
This is clear by the definition \eqref{eq:RA} of $R_{A,\alpha}$.
\end{proof}

For the next two lemmas, let $\Gamma \in \ZZ$ and $\bm{w}\in K_\infty^6$.

\begin{lemma}
\label{LEM:old-general-int-estimate}
We have $J^\Gamma_{F,w}(\bm{w})\ll (1+|\bm{w}|)^{-2}$.
Moreover, if $\norm{\bm{w}}$ is sufficiently large, then $J_{F,w}(\gamma, \bm{w})=0$ unless $|\gamma|\asymp |\bm{w}|$.
\end{lemma}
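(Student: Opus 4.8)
The statement is a standard van der Corput / stationary phase estimate for oscillatory integrals attached to the cubic form $F$, and the natural route is to reduce to the box-integral estimates already available in \cite{BGW2024positive}*{\S~5} via the decomposition \eqref{box-integrals}. First I would recall from \eqref{box-integrals} that
\[
J^\Gamma_{F,w}(\bm{w}) = q^{6\tilde A}\sum_{\bm{y}\in R_{A,\alpha}^2} J^{\Gamma+3\tilde A}_{F,\omega_{\bm{y}}}(t^{\tilde A}\bm{w}),
\]
where the shift in the $\gamma$-range comes from the substitution $\gamma\mapsto t^{3\tilde A}\gamma$ in the definition of $J^\Gamma$, and each $\omega_{\bm{y}}$ satisfies \cite{BGW2024positive}*{Hypothesis~5.3} by Lemma~\ref{LEM:w_A-is-nice}. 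Since $R_{A,\alpha}^2$ is a \emph{finite} set whose cardinality depends only on $A$ and $\alpha$ (here $q$ is fixed), it suffices to prove the claimed bounds for each individual $J^{\Gamma'}_{F,\omega_{\bm{y}}}(\bm{w}')$ with the understanding that all implied constants may depend on $A,\alpha$, exactly as stipulated at the start of the section.

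For a single box weight $\omega_{\bm{y}}$ I would invoke the function field second-derivative estimate of \cite{BGW2024positive}*{Lemma~5.5} (and its integrated form \cite{BGW2024positive}*{Lemma~5.8}): because $F$ is a nonsingular cubic form, on the support of $\omega_{\bm{y}}$ — which avoids the origin, as guaranteed by $1\le |y_i|$ in \eqref{eq:RA} and Lemma~\ref{LEM:w_A-is-nice} — the gradient $\nabla F$ is bounded below, so stationary-phase/Hessian bounds give $J_{F,\omega_{\bm{y}}}(\gamma,\bm{w})\ll |\gamma|^{-3}$ for $|\gamma|$ large, together with the non-oscillation (integration by parts) principle that $J_{F,\omega_{\bm{y}}}(\gamma,\bm{w})$ vanishes unless the total phase $\gamma\nabla F(\bm{x})+\bm{w}$ can be small somewhere on the support, i.e. unless $|\gamma|\gg |\bm{w}|$ once $|\bm{w}|$ is large. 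Conversely, when $|\gamma|$ is much larger than $|\bm{w}|$, the $\gamma F$ term dominates and one again gets rapid decay in $|\gamma|$; combined this pins $|\gamma|\asymp|\bm{w}|$ in the regime $|\bm{w}|$ large. Integrating the bound $\min(1,|\gamma|^{-3})$ (or the sharper localisation $|\gamma|\asymp|\bm{w}|$ times the pointwise bound) over $|\gamma|<\widehat{\Gamma}$ yields $J^\Gamma_{F,\omega_{\bm{y}}}(\bm{w})\ll (1+|\bm{w}|)^{-2}$: when $|\bm{w}|$ is large the integral is over $|\gamma|\asymp|\bm{w}|$ of a quantity $\ll |\bm{w}|^{-3}$, giving $\ll |\bm{w}|^{-2}$; when $|\bm{w}|\ll 1$ the trivial bound $J_{F,\omega_{\bm{y}}}(\gamma,\bm{w})\ll\vol(\supp\omega_{\bm{y}})$ on a bounded $\gamma$-range suffices (note the $\gamma$-range itself need not be bounded, but $J_{F,\omega_{\bm{y}}}(\gamma,\bm{w})$ decays like $|\gamma|^{-3}$, so the tail converges). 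Summing the finitely many $\bm{y}$-contributions and undoing the $t^{\tilde A}$ rescaling of $\bm{w}$ (which only changes $|\bm{w}|$ by the fixed factor $q^{\tilde A}$, absorbed into the constant) gives both assertions of the lemma.

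The main obstacle is bookkeeping rather than conceptual: one must check that the hypotheses of \cite{BGW2024positive}*{Lemmas~5.5 and~5.8} genuinely apply to $\omega_{\bm{y}}$ with the parameters $\bm{x}_0=t^{-\tilde A}\bm{y}$ and $L=3\tilde A-\alpha$ furnished by Lemma~\ref{LEM:w_A-is-nice}, and in particular that the nonsingularity of $F$ on $\supp\omega_{\bm{y}}$ is uniform — this uses that $\bm{y}\in R_{A,\alpha}^2$ keeps all six coordinates bounded away from $0$ and from $\infty$, so $\nabla F$ stays in a compact set away from the origin. The exponent $-2$ (rather than $-3$) in $(1+|\bm{w}|)^{-2}$ is exactly what survives after integrating the pointwise $|\gamma|^{-3}$ decay over a $\gamma$-window of length $\asymp|\bm{w}|$; I would make sure the length of that window is correctly $\asymp|\bm{w}|$ and not larger, which is where the "$J_{F,w}(\gamma,\bm{w})=0$ unless $|\gamma|\asymp|\bm{w}|$" clause does double duty. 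No genuinely new estimate is needed; everything reduces to the cited lemmas plus the elementary observation that $\#R_{A,\alpha}^2=O_{A,\alpha}(1)$.
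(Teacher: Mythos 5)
Your proposal is correct and matches the paper's proof, which is simply the one-line citation of \eqref{box-integrals}, Lemma~\ref{LEM:w_A-is-nice}, and \cite{BGW2024positive}*{Lemma~5.5}; your elaboration of what those ingredients deliver is consistent with that. (One cosmetic slip: after the substitution $\gamma\mapsto t^{-3\tilde A}\gamma$ the prefactor in the $J^\Gamma$-decomposition becomes $q^{3\tilde A}$, not $q^{6\tilde A}$, though this is a bounded constant and harmless.)
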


\begin{proof}
Immediate from \eqref{box-integrals},
Lemma~\ref{LEM:w_A-is-nice},
and \cite{BGW2024positive}*{Lemma~5.5}.
\end{proof}

We note that \cite{BGW2024positive}*{Lemma~5.5} was proven for arbitrary $L\ge 0$, whereas \cite{BGW2024positive}*{Lemma~5.8} was only proven for $L=0$.
Therefore, the following lemma requires a bit more work.

\begin{lemma}
\label{LEM:new-vanishing-for-small-dual-form}
We have $J^\Gamma_{F,w}(\bm{w})=0$ unless $\abs{F^\ast(\bm{w})}\ll 1+\norm{\bm{w}}^{\deg{F^\ast}-1}$. 
\end{lemma}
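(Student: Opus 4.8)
The plan is to reduce the claim to the corresponding statement for the translated box weights $\omega_{\bm y}$, for which the relevant estimate — \cite{BGW2024positive}*{Lemma~5.8} — is already available, albeit only in the case $L=0$. By the decomposition \eqref{box-integrals}, we have
\[
J^\Gamma_{F,w}(\bm w)=q^{n\tilde A}\sum_{\bm y\in R_{A,\alpha}^2}\int_{|\gamma|<\widehat\Gamma}J_{F,\omega_{\bm y}}(t^{3\tilde A}\gamma,t^{\tilde A}\bm w)\,\dd\gamma.
\]
After the substitution $\gamma'=t^{3\tilde A}\gamma$ this becomes a constant multiple of $\sum_{\bm y}J^{\Gamma-3\tilde A}_{F,\omega_{\bm y}}(t^{\tilde A}\bm w)$. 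Since each $\omega_{\bm y}$ satisfies \cite{BGW2024positive}*{Hypothesis~5.3} by Lemma~\ref{LEM:w_A-is-nice}, I would like to apply the function-field analogue of the classical ``dual form controls vanishing'' estimate to each summand, concluding that the integral for $\bm y$ vanishes unless $|F^\ast(t^{\tilde A}\bm w)|\ll 1+\norm{t^{\tilde A}\bm w}^{\deg F^\ast-1}$, and then absorb the fixed power of $t^{\tilde A}$ (recall $\tilde A$ is bounded in terms of $A,\alpha$, on which implied constants are allowed to depend) to recover the bound in terms of $\bm w$ itself.

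The main obstacle is exactly the caveat flagged in the text: \cite{BGW2024positive}*{Lemma~5.8} was proven only for $L=0$, whereas here $L=3\tilde A-\alpha$ can be large. So the real content of the proof is to upgrade that lemma to arbitrary $L\ge 0$. I expect this to go through by the same stationary-phase / Weyl-differencing argument used there, tracking the effect of the scaling $\bm x\mapsto \bm x_0+t^{-L}\bm x'$ that reduces a weight supported on a box of ``radius'' $q^{-L}$ around $\bm x_0$ to one supported on $\TT^n$. Under this rescaling $F(\bm x_0+t^{-L}\bm x')$ is a polynomial whose top-degree part is $t^{-3L}F(\bm x')$ and whose lower-order terms involve the partial derivatives of $F$ at $\bm x_0$; the dual form $F^\ast$ transforms predictably, and one checks that the oscillatory integral estimate from the $L=0$ case applies to the rescaled phase after adjusting $\gamma$ by the factor $t^{3L}$ and $\bm w$ by $t^{-L}$, which is precisely what \eqref{box-integrals} already anticipates. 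Carrying out this bookkeeping carefully — in particular verifying that the support and smoothness conditions in \cite{BGW2024positive}*{Hypothesis~5.3} with parameter $L$ are exactly what is needed — is the technical heart of the argument.

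Concretely, the steps I would carry out are: (i) rewrite $J^\Gamma_{F,w}(\bm w)$ via \eqref{box-integrals} and a change of variables in $\gamma$ as a finite sum of $J^{\Gamma'}_{F,\omega_{\bm y}}(\bm w')$ with $\Gamma'=\Gamma-3\tilde A$ and $\bm w'=t^{\tilde A}\bm w$; (ii) state and prove the general-$L$ version of \cite{BGW2024positive}*{Lemma~5.8}, namely that for a weight satisfying \cite{BGW2024positive}*{Hypothesis~5.3} with parameters $\bm x_0,L$ one has $J^{\Gamma'}_{F,\omega}(\bm w')=0$ unless $|F^\ast(\bm w')|\ll 1+\norm{\bm w'}^{\deg F^\ast-1}$, with the implied constant independent of $L$ (or depending on it acceptably) — this is where the rescaling argument above is inserted; (iii) apply this to each $\omega_{\bm y}$, noting $\norm{t^{-\tilde A}\bm y}\le 1$ so the boxes are uniformly controlled, to get vanishing of every summand unless $|F^\ast(t^{\tilde A}\bm w)|\ll 1+\norm{t^{\tilde A}\bm w}^{\deg F^\ast-1}$; (iv) use homogeneity of $F^\ast$ (of degree $\deg F^\ast$) and the fact that $q^{\tilde A}\asymp 1$ for fixed $A,\alpha$ to rewrite this as $|F^\ast(\bm w)|\ll 1+\norm{\bm w}^{\deg F^\ast-1}$, completing the proof. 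A minor point to handle is the regime of small $\norm{\bm w}$, where the ``$1+$'' on the right-hand side makes the bound vacuous and there is nothing to prove; the content is entirely in the large-$\norm{\bm w}$ range, exactly as in Lemma~\ref{LEM:old-general-int-estimate}.
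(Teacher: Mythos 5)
You correctly flag the crux: \cite{BGW2024positive}*{Lemma~5.8} is proven only for $L=0$, and the real work is to cope with $L=3\tilde A-\alpha>0$. But your step~(ii) --- upgrading that lemma to general $L$ for a single box weight $\omega_{\bm y}$ --- runs into a genuine obstacle. The proof of \cite{BGW2024positive}*{Lemma~5.8} hinges on the change of variables $\bm x\mapsto\lambda^{-1/2}\bm x$ for $\lambda\in 1+\TT$, which is available precisely because the weight is invariant under scaling by $1+\TT$. An individual box weight $\omega_{\bm y}(\bm x)=\bm 1_{|\bm x-\bm x_0|<q^{-L}}$ with centre $|\bm x_0|\asymp 1$ and $L\ge 1$ is \emph{not} invariant under this scaling: $\lambda\bm x_0-\bm x_0$ has size $\asymp q^{-1}$, which exceeds the box radius $q^{-L}$ as soon as $L\ge 2$. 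So the key trick in the $L=0$ proof does not transfer box by box. Your alternative suggestion, recentring via $\bm x\mapsto\bm x_0+t^{-L}\bm x'$, has its own problem: $F(\bm x_0+t^{-L}\bm x')$ is no longer homogeneous, and the statement you want to prove is phrased in terms of the dual variety $F^\ast$ of the homogeneous $F$; it is not clear what plays the role of $F^\ast$ after recentring, nor that the $L=0$ machinery applies to the resulting non-homogeneous phase.

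The paper resolves this by reversing the order of operations: it first exploits scaling invariance of the \emph{full} weight $w$ (via Lemma~\ref{LEM:integral-scale-invariance}, which uses that $R_{A,\alpha}^2$ is preserved by $1+\TT$), writing $J^\Gamma_{F,w}(\bm w)$ as a sum over representatives $\gamma_0$ of an inner $\mu$-integral involving $\lambda=1+\mu$, and only \emph{then} decomposes $w$ into boxes $\omega_{\bm y}$ so as to apply the stationary phase lemma \cite{BGW2024positive}*{Lemma~5.2}. The box decomposition is thus used for stationary phase, never for a version of Lemma~5.8. One more subtlety the paper must address (and you do not) is that $\psi(\lambda^{-1/2}\Phi(\bm x))$ involves the full power series $\lambda^{-1/2}=\sum_k\binom{-1/2}{k}\mu^k$, and the polynomial lemma \cite{BGW2024positive}*{Lemma~5.2} must be applied to a truncation; this works because only the first $\log_q\norm{\bm w}+O(1)$ terms matter and the relevant height $H_G$ is controlled independently of the truncation degree. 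So while your starting point and final target match the paper, the middle of your proof as sketched does not close, and a correct proof needs the reordering just described.
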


\begin{proof}
We roughly follow \cite{BGW2024positive}*{proof of Lemma~5.8}.
As there we may assume $\norm{\bm{w}} \gg 1$, with an implied constant as large as we wish.
Then by Lemma~\ref{LEM:old-general-int-estimate}, we have $J^\Gamma_{F,w}(\bm{w})=0$
unless $1 \ll \norm{\bm{w}} \ll \hat\Gamma$, in which case
\begin{equation*}
J^\Gamma_{F,w}(\bm{w})
= \int_{\abs{\gamma}\asymp \norm{\bm{w}}} J_{F,w}(\gamma,\bm{w}) \,\dd\gamma.
\end{equation*}

As noted in the proof of Lemma~\ref{LEM:integral-scale-invariance}, we have $w(\bm{x}) = w(\lambda\bm{x})$ for all $\lambda\in 1+\TT$.
The change of variables $\bm{x}\mapsto \lambda^{-1/2}\bm{x}$ preserves the Haar measure $\dd\bm{x}$ on $K_\infty^6$, whence
\begin{equation*}
J_{F,w}(\gamma,\bm{w})
= J_{F,w}(\lambda^{-3/2}\gamma,\lambda^{-1/2}\bm{w})
= J_{F,w}(\lambda^{-1/2}\gamma/\lambda,\lambda^{-1/2}\bm{w}).
\end{equation*}
Letting $S\belongs K_\infty^\times$ be a complete set of representatives for the quotient group $K_\infty^\times/(1+\TT)$, and writing $\gamma=\gamma_0\lambda$ with $\gamma_0\in S$ and $\lambda\in 1+\TT$, we deduce that
\begin{equation}
\label{emph-1}
J^\Gamma_{F,w}(\bm{w})
= \sum_{\substack{\gamma_0\in S\\  \abs{\gamma_0}\asymp \norm{\bm{w}}}} \abs{\gamma_0}
\int_\TT J_{F,w}(\lambda^{-1/2}\gamma_0,\lambda^{-1/2}\bm{w}) \,\dd\mu,
\end{equation}
where $\lambda\defeq 1+\mu$.

By stationary phase, in the form  \cite{BGW2024positive}*{Lemma~5.2},
applied to each weight $\omega_{\bm{y}}$ in \eqref{box-integrals} after
a linear change of variables
with bounded coefficients,
we have
\begin{equation}
\begin{split}
\label{emph-2}
J_{F,w}(\lambda^{-1/2}\gamma_0,\lambda^{-1/2}\bm{w})
&= q^{n\tilde{A}} \sum_{\bm{y}\in R_{A,\alpha}^2}
J_{F,\omega_{\bm{y}}}(t^{3\tilde{A}}\lambda^{-1/2}\gamma_0, t^{\tilde{A}}\lambda^{-1/2}\bm{w}) \\
&= \int_\Omega w(\bm{x}) \psi(\lambda^{-1/2}\Phi(\bm{x})) \,\dd\bm{x},
\end{split}
\end{equation}
where $\Phi(\bm{x}) \defeq \gamma_0 F(\bm{x})+\bm{w}\cdot \bm{x}$,
and where $\Omega\subseteq K_\infty^n$ is a region such that
\begin{equation*}
\Omega \subseteq \set{\bm{x}\in \supp(w):
\abs{\nabla{\Phi}(\bm{x})} \ll \max(1,\abs{\gamma_0})^{1/2}}.
\end{equation*}
Since $\abs{\gamma_0}\asymp \norm{\bm{w}} \gg 1$, this means
\begin{equation}
\label{final-Omega}
\Omega \subseteq \set{\bm{x}\in \supp(w):
\abs{\nabla{\Phi}(\bm{x})} \ll \abs{\gamma_0}^{1/2}}.
\end{equation}

The function $\Phi$ depends on $\gamma_0$, but not on $\lambda$.
Moreover, $\Phi(\bm{x}) \ll_w \norm{\bm{w}}$ for $\bm{x}\in \supp(w)$, so $\psi(\lambda^{-1/2}\Phi(\bm{x}))$ depends only on the first $(\log_q\norm{\bm{w}}) + O_w(1)$ terms of the power series expansion $\lambda^{-1/2} = \sum_{k\ge 0} \binom{-1/2}{k} \mu^k$.
The coefficients of this power series in $\mu$ all have absolute value $\le 1$, so by \cite{BGW2024positive}*{Lemma~5.2} for a suitable polynomial $G$, we get
\begin{equation}
\label{emph-3}
\int_\TT \psi(\lambda^{-1/2}\Phi(\bm{x})) \,\dd\mu = 0,
\end{equation}
unless $\abs{\Phi(\bm{x})} \le 1$.
Here $\norm{\bm{w}}$ is allowed to be unbounded.
Indeed, the statement of \cite{BGW2024positive}*{Lemma~5.2} depends on the height $H_G$, but not on the degree $\deg{G}$.

Combining \eqref{emph-1}, \eqref{emph-2}, and \eqref{emph-3}, we conclude that $J^\Gamma_{F,w}(\bm{w}) = 0$,
unless there exist an element $\gamma_0\in S$, and a point $\bm{x}\in \Omega$,
such that $\abs{\gamma_0}\asymp \norm{\bm{w}}$ and $\abs{\Phi(\bm{x})} \le 1$.
Arguing via \eqref{final-Omega} and \cite{BGW2024positive}*{Lemma~5.7} as in the final two paragraphs of \cite{BGW2024positive}*{proof of Lemma~5.8}, it follows that $\abs{F^\ast(\bm{w})}\ll 1+\norm{\bm{w}}^{\deg{F^\ast}-1}$, as desired.
\end{proof}

\section{Ratios analysis}
\label{SEC:new-ratios-stuff}
In this section we initiate the proof of Theorem 
\ref{Thm: Manin} and 
  collect together some of the estimates coming from the Ratios Conjecture that will be useful in this endeavour. 
Recall the definition \eqref{eq:fels} of the counting function $N_w(P)$, where $w$ is the weight function defined in Definition \ref{DEFN:A-skew-weights}.
  Applying 
\cite{BGW2024positive}*{Eq.~(2.9)}, the circle method leads to the expression
$$
    N_w(P)=|P|^n\sum_{\substack{r\in\Omon\\ |r|\leq \widehat{Q}}}|r|^{-n}\sum_{\bm{c}\in \OK^n}S_r(\bm{c})I_r(\bm{c}),
$$
for suitable exponential sums $S_r(\bm{c})$ and oscillatory integrals 
$I_r(\bm{c})$. (In fact, we have 
$I_r(\bm{c})=J_{F,w}^\Gamma(P\bm{c}/r)$, with $\Gamma=-\deg(r)-Q$.)
We make the decomposition
    \begin{equation}\label{eq:stein}
    N_w(P)=M(P)+E_1(P)+E_2(P),       
    \end{equation}
where $M(P)$ is the contribution from $\bm{c}=\bm{0}$, 
$E_1(P)$ is the contribution from $\bm{c}$ for which 
$F^*(\bm{c})\neq 0$, and finally, 
$E_2(P)$ is the contribution from $\bm{c}$ for which 
$F^*(\bm{c})= 0$.

For the remainder of this section our goal will be to  harness the Ratios Conjecture to ensure that $E_1(P)$ makes a negligible contribution to $N_w(P)$.

Let $V$ and $V_{\bm{c}}$ be the $K$-varieties in $\PP^{5}_K$ defined by $F(\bm{x})=0$ and $F(\bm{x})=\bm{c}\cdot\bm{x}=0$, respectively. 
Let $L(s,V) = L(s,H^4_\ell(V)/H^4_\ell(\PP^5))$
and let
$
L(s,\bm{c}) = L(s,H^3_\ell(V_{\bm{c}})),
$
for $\bm{c}\in \mathcal{S}_1 = \{\bm{c}\in \OK^n: F^\ast(\bm{c})\neq 0\}$.
As in our previous work, we write  $\mu_{\bm{c}}(r)$ for  the $r$th coefficient of the Euler product $L(s,\bm{c})^{-1} = \prod_\varpi L_\varpi(s,\bm{c})^{-1}$. 
The following is a minor variant of \cite{BGW2024positive}*{Proposition~3.5}.

\begin{proposition}
\label{PROP:(LocAvSp)}
Let $\bm{a}\in \OK^6$ and $d,r\in \Omon$.
Let $\EE^{\bm{a},d}_{\bm{c}\in S}[f]$ be the average of $f$ over $\set{\bm{c}\in S: \bm{c}\equiv\bm{a}\bmod{d}}$ (assuming this set is nonempty).
The limit
\begin{equation*}
\bar{\mu}_{F,1}^{\bm{a},d}(r)
\defeq \lim_{Z\to \infty}
\EE^{\bm{a},d}_{\bm{c}\in \mathcal{S}_1:\, \norm{\bm{c}}\le \hat Z}
[\mu_{\bm{c}}(r)]\end{equation*}
exists.
Moreover, $\bar{\mu}_{F,1}^{\bm{a},d}(r)\bar{\mu}_{F,1}^{\bm{a},d}(r')
= \bar{\mu}_{F,1}^{\bm{a},d}(rr')$ if $\gcd(r,r')=1$.

Now let $\varpi\in \Omon$ be a prime, and let $l\ge 0$ be an integer.
Then
\begin{equation}
\label{INEQ:GRC-bound-for-averages-mu-bar}
\bar{\mu}_{F,1}^{\bm{a},d}(\varpi^l) \ll_\eps \abs{\varpi}^{l\eps}.
\end{equation}
Furthermore, if $\varpi\nmid d$, then
$\bar{\mu}_{F,1}^{\bm{a},d}(\varpi^l)
= \bar{\mu}_{F,2}(\varpi^l,1)$.
\end{proposition}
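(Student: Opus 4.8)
The plan is to follow the proof of \cite{BGW2024positive}*{Proposition~3.5} essentially line by line, the congruence $\bm{c}\equiv\bm{a}\bmod d$ being carried along throughout, with the final identity for $\varpi\nmid d$ the one genuinely new point. Since $\mu_{\bm{c}}$ is the coefficient sequence of an Euler product, it is multiplicative---$\mu_{\bm{c}}(rr')=\mu_{\bm{c}}(r)\mu_{\bm{c}}(r')$ whenever $\gcd(r,r')=1$---and $\mu_{\bm{c}}(\varpi^l)$ depends only on the fibre $V_{\bm{c}}$ above $\varpi$. First I would use this, exactly as in \cite{BGW2024positive}, to reduce the existence and multiplicativity assertions to the case $r=\varpi^l$ of a prime power: partitioning the box $\{\bm{c}\in\mathcal{S}_1:\ \bm{c}\equiv\bm{a}\bmod d,\ \norm{\bm{c}}\le\hat Z\}$ into residue classes modulo $r$ and modulo $r'$ and invoking the Chinese remainder theorem, the box-average of $\mu_{\bm{c}}(r)\mu_{\bm{c}}(r')$ factors, as $Z\to\infty$, into the product of the two separate limits, the constraint modulo $d$ merely restricting which residues occur.

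\textbf{Prime powers.} For $r=\varpi^l$ I would stratify $\mathcal{S}_1$ by $j\defeq v_\varpi(F^\ast(\bm{c}))\ge 0$. On the open stratum $j=0$ the fibre $V_{\bm{c}}$ has good reduction at $\varpi$, so $L_\varpi(s,\bm{c})^{-1}$, and hence $\mu_{\bm{c}}(\varpi^l)$, is a function of $\bm{c}\bmod\varpi$; letting $Z\to\infty$, so that $\bm{c}$ equidistributes among the residue classes modulo $\varpi$ compatible with $\bm{c}\equiv\bm{a}\bmod d$, this part of the average tends to an explicit finite sum. The strata with $j\ge 1$ have density $\ll\abs{\varpi}^{-j}$ in large boxes (for all but finitely many $\varpi$), and $\mu_{\bm{c}}(\varpi^l)$ is bounded by $\abs{\varpi}^{O(1)}$ uniformly, so their total contribution converges geometrically; this already yields existence of $\bar{\mu}_{F,1}^{\bm{a},d}(\varpi^l)$. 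The bound \eqref{INEQ:GRC-bound-for-averages-mu-bar} lies deeper: the trivial estimates leave a main term of size some positive power of $\abs{\varpi}$, and improving this to $\ll_\eps\abs{\varpi}^{l\eps}$ requires the cancellation in the $\bm{c}$-average of $\mu_{\bm{c}}(\varpi^l)$ supplied by the Ratios Conjecture. This is the step carried over essentially verbatim from \cite{BGW2024positive}.

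\textbf{Decoupling for $\varpi\nmid d$.} Here $\gcd(d,\varpi^N)=1$ for every $N$, so by the Chinese remainder theorem the class $\bm{c}\equiv\bm{a}\bmod d$ meets each class modulo $\varpi^N$ in a class modulo $d\varpi^N$ of the expected relative density. As $\mu_{\bm{c}}(\varpi^l)$ depends only on the $\varpi$-adic coordinates of $\bm{c}$, the stratified computation above returns the same weighted combination of residue-class values whether or not the condition modulo $d$ is imposed; hence $\bar{\mu}_{F,1}^{\bm{a},d}(\varpi^l)$ equals the unrestricted average, which in the notation of \cite{BGW2024positive} is $\bar{\mu}_{F,2}(\varpi^l,1)$.

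\textbf{Main obstacle.} I expect the crux to be the prime-power step: one must check that the Ratios Conjecture can be applied in a form uniform enough in $\varpi$ and $l$ to control the main (good-stratum) term and all of the tail strata together, and to license the interchange of $\lim_{Z\to\infty}$ with the stratification. This is precisely where the hypothesis $\bm{c}\in\mathcal{S}_1$, i.e.\ $F^\ast(\bm{c})\ne 0$, is essential, since it makes each stratum $j\ge 1$ a genuine positive-codimension condition rather than all of $\mathcal{S}_1$.
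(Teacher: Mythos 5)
Your overall framing (reduce to prime powers via CRT and multiplicativity of the Euler product, then stratify by $j=v_\varpi(F^\ast(\bm c))$) is indeed in the spirit of the paper's reference back to \cite{BGW2024positive}*{Proposition~3.5}, and the ``$\varpi\nmid d$'' decoupling at the end is correctly handled by the Chinese remainder theorem exactly as you describe. The paper's own proof is short: ``the final sentence is obvious'', and everything else is ``proven just as in [BGW2024positive, Proposition~3.5], using [BGW2024positive, Lemma~3.4], the bound $\mu_{\bm c}(r)\ll_\eps|r|^\eps$, and the Chinese remainder theorem.''

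There is, however, one substantive error in your argument. You attribute the bound $\bar\mu_{F,1}^{\bm a,d}(\varpi^l)\ll_\eps|\varpi|^{l\eps}$ to ``cancellation supplied by the Ratios Conjecture.'' That is wrong on two counts. First, the Proposition is stated unconditionally: unlike Propositions~\ref{PROP:RA1'E} and~\ref{PROP:RA1'E'} and Theorem~\ref{Thm: E1Contri}, it carries no hypothesis (R2) or (RA1). Second, the bound \eqref{INEQ:GRC-bound-for-averages-mu-bar} comes not from cancellation in the $\bm c$-average but from the \emph{pointwise} estimate $\mu_{\bm c}(r)\ll_\eps|r|^\eps$, which over $\FF_q[t]$ is an unconditional consequence of the Weil conjectures (Deligne) applied to the normalised $L$-function $L(s,\bm c)$ of $H^3_\ell(V_{\bm c})$; a uniform pointwise bound trivially passes to any average, congruence-restricted or not. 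The label ``GRC'' refers to the fact that this is the function-field analogue of the Grand Riemann Hypothesis input, which here is a theorem. Your ``main obstacle'' paragraph, speculating on uniformity of the Ratios Conjecture in $\varpi$ and $l$, is therefore solving a non-problem. Relatedly, your intermediate claim that ``$\mu_{\bm c}(\varpi^l)$ is bounded by $|\varpi|^{O(1)}$ uniformly'' understates what is needed and available: one uses $\ll_\eps|\varpi|^{l\eps}$ (with $l$ in the exponent), which together with the density $\ll|\varpi|^{-j}$ of the stratum $j$ gives a convergent tail. With the Ratios Conjecture removed and the Deligne bound inserted, the rest of your argument is sound and matches the paper's approach.
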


\begin{proof}
The final sentence is obvious.
Everything else, up to and including \eqref{INEQ:GRC-bound-for-averages-mu-bar},
is proven just as in the proof of \cite{BGW2024positive}*{Proposition~3.5}, 
using \cite{BGW2024positive}*{Lemma~3.4}, the bound $\mu_{\bm{c}}(r) \ll_\eps \abs{r}^\eps$, and the Chinese remainder theorem.
\end{proof}

Informally, Proposition~\ref{PROP:(LocAvSp)},
combined with \cite{BGW2024positive}*{Eq.~(3.5)},
tells us
\begin{equation*}
\sum_{r\in \Omon} \bar{\mu}_{F,1}^{\bm{a},d}(r) \abs{r}^{-s}
\approx \prod_\varpi (1 + \lambda_V(\varpi)\abs{\varpi}^{-s-1/2} + \abs{\varpi}^{-2s})
\approx L(s+\tfrac12,V) \zeta_K(2s).
\end{equation*}
In fact, the Euler product 
\begin{equation}\label{eq:def-A1}
A_{F,1}^{\bm{a},d}(s) \defeq \zeta_K(2s)^{-1} L(s+\tfrac12,V)^{-1} \sum_{r\in \Omon} \bar{\mu}_{F,1}^{\bm{a},d}(r) \abs{r}^{-s}
\end{equation}
converges absolutely for $\Re(s) \ge \frac13 + \eps$, for any $\eps>0$, and satisfies
\begin{equation}
\label{INEQ:A_F,1-bound}
A_{F,1}^{\bm{a},d}(s) \ll_\eps \abs{d}^\eps.
\end{equation}
The expression $A_{F,1}^{\bm{a},d}$ appears as the ``leading constant'' in the Ratios Conjecture~\ref{CNJ:RA1}~(RA1).
The Ratios Recipe \cite{conrey2008autocorrelation}*{\S~5.1}, directly adapted to function fields as in \cite{andrade2014conjectures}, produces (RA1), even with a power-saving error term $O(\hat Z^{-\delta})$ independent of $\beta$ for $\beta\le \delta$, say.

Let $A_{F,1}^{\bm{a},d}(s)$ be defined as in 
\eqref{eq:def-A1}, in terms of certain local averages $\bar{\mu}_{F,1}^{\bm{a},d}(r)$.
\begin{conjecture}[RA1]
\label{CNJ:RA1}
Fix a real $M\ge 0$.
Let $\bm{a}\in \OK^6$ and $d\in \Omon$ with $\norm{\bm{a}}, \abs{d} \le \hat M$.
Let $\bm{b}\in K_\infty^6$ with $\norm{\bm{b}}\le 1$.
There exists $\beta=\beta_M(Z)\in [0,1]$ such that
if $$s=\beta+\sigma(Z)+i\ts,$$
then uniformly over $Z\in \NN$ and $\ts\in \RR$, we have
$$
\sum_{\substack{\bm{c}\in \mathcal{S}_1 \\
\bm{c}\equiv \bm{a}\bmod{d} \\
\norm{\bm{c} - t^Z\bm{b}}\le \hat Z/\hat M}}
\Phi^{\bm{c},1}(s)
= \sum_{\substack{\bm{c}\in \mathcal{S}_1 \\
\bm{c}\equiv \bm{a}\bmod{d} \\
\norm{\bm{c} - t^Z\bm{b}}\le \hat Z/\hat M}}
(1 + O(g \hat Z^{-3\beta})) \, A_{F,1}^{\bm{a},d}(s),
$$
for some function $g=g_M(Z) \to 0$ as $Z\to \infty$.
\end{conjecture}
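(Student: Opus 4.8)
Since Conjecture~\ref{CNJ:RA1} is a hypothesis rather than a theorem, there is nothing to prove unconditionally; instead the plan is to \emph{derive} it heuristically via the Ratios Recipe of Conrey--Farmer--Zirnbauer \cite{conrey2008autocorrelation}*{\S~5.1}, in the function-field form of \cite{andrade2014conjectures}, applied to the family $\{1/L(s,\bm{c})\}$ as $\bm{c}$ runs over the box $\mathcal{B}\defeq\{\bm{c}\in\mathcal{S}_1:\ \bm{c}\equiv\bm{a}\bmod d,\ \norm{\bm{c}-t^Z\bm{b}}\le\hat Z/\hat M\}$. Here $\Phi^{\bm{c},1}(s)$ is the ``ratio'' attached to $1/L(s,\bm{c})$ in \S\ref{SEC:new-ratios-stuff}, normalised by the prefactor $\zeta_K(2s)^{-1}L(s+\tfrac12,V)^{-1}$, so that in view of \eqref{eq:def-A1} the content of RA1 is exactly that averaging $1/L(s,\bm{c})=\sum_{r\in\Omon}\mu_{\bm{c}}(r)\abs{r}^{-s}$ over $\mathcal{B}$ and then summing the Dirichlet series agrees, up to the stated error, with summing the Dirichlet series of the averaged coefficients $\bar{\mu}_{F,1}^{\bm{a},d}(r)$ produced by Proposition~\ref{PROP:(LocAvSp)}.

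The recipe proceeds in three moves. \emph{(i)} Apply the approximate functional equation to $1/L(s,\bm{c})$, writing it (up to the fixed prefactor) as a principal sum $\sum_{\abs{r}\le X}\mu_{\bm{c}}(r)\abs{r}^{-s}$ plus a dual sum obtained by reflecting $s\mapsto 1-s$ and carrying the conductor factor and root number of $L(s,\bm{c})$. \emph{(ii)} Average the principal sum over $\mathcal{B}$: for fixed $r$ the coefficient $\mu_{\bm{c}}(r)$ depends on $\bm{c}$ only through its residue modulo $\operatorname{rad}(r)$, so once $\hat Z/\hat M\gg\abs{r}$ the box $\mathcal{B}$ equidistributes modulo $\operatorname{rad}(r)$ among residues compatible with $\bm{c}\equiv\bm{a}\bmod d$, and hence has the same average of $\mu_{\bm{c}}(r)$ as the larger box $\norm{\bm{c}}\le\hat Z$, namely $\bar{\mu}_{F,1}^{\bm{a},d}(r)$ by Proposition~\ref{PROP:(LocAvSp)}, up to an equidistribution error $O(\abs{r}\hat M/\hat Z)$; weighting by $\abs{r}^{-s}$, summing over $\abs{r}\le X$ with $X$ a small power of $\hat Z$, and dividing out $\zeta_K(2s)L(s+\tfrac12,V)$ via \eqref{eq:def-A1}, the truncated main term matches the Euler product $A_{F,1}^{\bm{a},d}(s)$, which converges absolutely on the half-plane $\Re(s)\ge\tfrac13+\eps$ by \eqref{eq:def-A1}--\eqref{INEQ:A_F,1-bound} — precisely the reason the offset $\sigma(Z)$ is chosen so that $\Re(s)=\beta+\sigma(Z)$ lies there — while \eqref{INEQ:GRC-bound-for-averages-mu-bar} bounds the truncation tail; balancing the tail against the accumulated equidistribution error yields a power saving that degrades to $O(g\hat Z^{-3\beta})$ as $\beta\to 0$, uniformly in $\ts$ and $\bm{b}$ since $A_{F,1}^{\bm{a},d}(s)$ depends on neither. \emph{(iii)} For the dual sum the recipe predicts that, after averaging over $\mathcal{B}$, the root numbers exhibit cancellation reflecting the symmetry type of the family, so the dual contribution is of lower order and is swallowed by the same error term $O(g\hat Z^{-3\beta})$.

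The main obstacle is move (iii), together with the legitimacy of interchanging the box-average with the Dirichlet-series summation in (ii) outside the region of absolute convergence: both amount to controlling $1/L(s,\bm{c})$ inside and to the left of the critical strip on average over $\bm{c}$, which has the same strength as the Ratios Conjecture itself and is open even for $\zeta$ and Dirichlet $L$-functions — hence RA1 is assumed, not proved. What \emph{is} available unconditionally, and is what pins down the shape of the conjecture (in particular its leading constant), is Proposition~\ref{PROP:(LocAvSp)} (existence and multiplicativity of $\bar{\mu}_{F,1}^{\bm{a},d}$ and the bound \eqref{INEQ:GRC-bound-for-averages-mu-bar}) together with the absolute convergence of $A_{F,1}^{\bm{a},d}(s)$ for $\Re(s)\ge\tfrac13+\eps$ and the estimate \eqref{INEQ:A_F,1-bound}; the Ratios Recipe, as adapted to function fields in \cite{andrade2014conjectures}, then supplies the remaining conjectural content, producing a power-saving error $O(\hat Z^{-\delta})$ that we have deliberately weakened to the $\beta$-dependent form $O(g\hat Z^{-3\beta})$ actually required for the circle-method bound on $E_1(P)$ in \S\ref{SEC:new-ratios-stuff}.
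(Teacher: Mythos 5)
You correctly identify that RA1 is a conjecture, not a theorem, and that its justification in the paper is heuristic: the function-field Ratios Recipe of Conrey--Farmer--Zirnbauer as adapted by Andrade--Keating, with the unconditional scaffolding (Proposition~\ref{PROP:(LocAvSp)}, \eqref{eq:def-A1}, \eqref{INEQ:A_F,1-bound}) pinning down the leading-constant factor $A_{F,1}^{\bm{a},d}(s)$. This matches the paper's treatment; the only small addition the paper makes that you omit is the remark that for $M=0$ the conjecture (with $g=\hat Z^{-3\beta}$, $\beta>0$) already follows from (R2), which serves as a modest sanity check on the formulation.
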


As in the case of (R2), it is expected that $\beta$ in (RA1) can be taken to be any small constant, including $0$.
When $M=0$, (RA1) follows easily from (R2), with $g = \hat Z^{-3\beta}$, if $\beta>0$.
It would be interesting to determine whether there is a similar implication for general $M\ge 0$.

We now build on (RA1).
Let $\bar{a}_{F,1}^{\bm{a},d}(r)$ be the $r$th coefficient of $A_{F,1}^{\bm{a},d}(s)$.

\begin{proposition}
\label{PROP:RA1'E}
Assume (RA1).
Let $M\ge 0$, $\norm{\bm{a}}, \abs{d} \le \hat M$, and $\norm{\bm{b}}\le 1$.
Let $Z,R\in \ZZ$ with $R\le 3Z$.
Then
$$
\sum_{\substack{\bm{c}\in \mathcal{S}_1 \\
\bm{c}\equiv \bm{a}\bmod{d} \\
\norm{\bm{c} - t^Z\bm{b}}\le \hat Z/\hat M}}
\sum_{r\in \Omon}
\bm{1}_{\abs{r} = \hat R} \cdot (a_{\bm{c},1}(r) - \bar{a}_{F,1}^{\bm{a},d}(r))
\ll \hat M^\eps g_M(Z) \sum_{\substack{\bm{c}\in \mathcal{S}_1 \\
\bm{c}\equiv \bm{a}\bmod{d} \\
\norm{\bm{c} - t^Z\bm{b}}\le \hat Z/\hat M}} \hat R^{1/2}.
$$
\end{proposition}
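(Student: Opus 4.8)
The plan is to isolate the ``degree-$R$ part'' of the two Dirichlet series $\Phi^{\bm c,1}(s)=\sum_{r\in\Omon}a_{\bm c,1}(r)\abs{r}^{-s}$ and $A_{F,1}^{\bm a,d}(s)=\sum_{r\in\Omon}\bar a_{F,1}^{\bm a,d}(r)\abs{r}^{-s}$ by a contour integral, and then to quote Conjecture~\ref{CNJ:RA1} on that contour. Both series are rational functions of $u\defeq q^{-s}$ (assembled from $\zeta_K(2s)^{-1}$, $L(s+\tfrac12,V)^{-1}$ and, respectively, $L(s,\bm c)^{-1}$ or its average over $\bm c$), so for any $\sigma_0$ lying in their common domain of absolute convergence, extracting the coefficient of $u^R=\abs{r}^{-s}$ (with $\abs r=\hat R$) by averaging against $q^{Rs}$ over a period in $\ts$ gives
\[
\sum_{\abs{r}=\hat R}\bigl(a_{\bm c,1}(r)-\bar a_{F,1}^{\bm a,d}(r)\bigr)
=\frac{\log q}{2\pi}\int_{0}^{2\pi/\log q}\bigl(\Phi^{\bm c,1}(s)-A_{F,1}^{\bm a,d}(s)\bigr)\,q^{Rs}\,\dd\ts,\qquad s=\sigma_0+i\ts.
\]
(If $R<0$ there is nothing to prove, since no $r\in\Omon$ has $\abs r=\hat R$.) Summing over $\bm c$ in the box $\mcal{C}\defeq\set{\bm c\in\mcal{S}_1:\bm c\equiv\bm a\bmod d,\ \norm{\bm c-t^{Z}\bm b}\le\hat Z/\hat M}$ and using that $A_{F,1}^{\bm a,d}$ does not depend on $\bm c$, the left-hand side of the proposition becomes $\tfrac{\log q}{2\pi}$ times the period integral of $\bigl(\sum_{\bm c\in\mcal{C}}\Phi^{\bm c,1}(s)-\abs{\mcal{C}}\,A_{F,1}^{\bm a,d}(s)\bigr)q^{Rs}$.

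Next I would take $\sigma_0=\beta+\sigma(Z)$, with $\beta$, $\sigma(Z)$ and $\bm b$ exactly as in Conjecture~\ref{CNJ:RA1}. Since $\sigma(Z)$ is taken just above $\tfrac12$, the line $\Re(s)=\sigma_0$ lies to the right of every pole of each rational function $\Phi^{\bm c,1}$ (whose poles sit on vertical lines of real part at most $\tfrac12$) and well inside the half-plane $\Re(s)\ge\tfrac13+\eps$ in which $A_{F,1}^{\bm a,d}$ converges absolutely, so the coefficient extraction above is legitimate. Conjecture~\ref{CNJ:RA1} then yields, uniformly in $\ts$,
\[
\Bigl\lvert\sum_{\bm c\in\mcal{C}}\Phi^{\bm c,1}(s)-\abs{\mcal{C}}\,A_{F,1}^{\bm a,d}(s)\Bigr\rvert
\ll g_M(Z)\,\hat Z^{-3\beta}\,\abs{\mcal{C}}\,\bigl\lvert A_{F,1}^{\bm a,d}(s)\bigr\rvert
\ll g_M(Z)\,\hat Z^{-3\beta}\,\abs{\mcal{C}}\,\hat M^{\eps},
\]
the last step by \eqref{INEQ:A_F,1-bound} together with $\abs d\le\hat M$.

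Finally I would do the elementary bookkeeping with the remaining power of $q$. Bounding the period integral trivially and using $\abs{q^{Rs}}=q^{R\sigma_0}$ shows that the left-hand side of the proposition is $\ll g_M(Z)\,\hat M^{\eps}\,\abs{\mcal{C}}\,\hat Z^{-3\beta}\,q^{R\sigma_0}$. Writing $\sigma_0=\beta+\sigma(Z)$ and using $0\le R\le 3Z$,
\[
\hat Z^{-3\beta}\,q^{R\sigma_0}
=\hat R^{1/2}\cdot\bigl(q^{R\beta}\hat Z^{-3\beta}\bigr)\cdot q^{R(\sigma(Z)-1/2)}
\le \hat R^{1/2}\cdot q^{3Z(\sigma(Z)-1/2)}\ll\hat R^{1/2},
\]
since $q^{R\beta}\le\hat Z^{3\beta}$ and $\sigma(Z)-\tfrac12$ decays like $1/Z$, so $q^{3Z(\sigma(Z)-1/2)}=O(1)$. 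Hence the left-hand side is $\ll g_M(Z)\,\hat M^{\eps}\,\abs{\mcal{C}}\,\hat R^{1/2}=\hat M^{\eps}g_M(Z)\sum_{\bm c\in\mcal{C}}\hat R^{1/2}$, which is the claimed bound.

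The step I expect to be the main obstacle is the first one: the line of integration must be chosen so that it is simultaneously to the right of all the poles of the individual $\Phi^{\bm c,1}$ (and inside the domain of absolute convergence of $A_{F,1}^{\bm a,d}$), so that the coefficient extraction is valid, and yet not appreciably to the right of $\Re(s)=\tfrac12$, since the factor $q^{R\sigma_0}$ that this extraction inevitably produces would otherwise swamp the target $\hat R^{1/2}$ even after the saving $\hat Z^{-3\beta}$ coming from (RA1) is exploited. It is exactly this tension that pins down the shape of $\sigma(Z)$ built into (RA1) and the hypothesis $R\le 3Z$; granting those, the remainder is routine, resting only on \eqref{INEQ:A_F,1-bound} and the uniformity in Conjecture~\ref{CNJ:RA1}.
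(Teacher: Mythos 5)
Your proof is correct and follows the same route as the paper's (one-sentence) argument: extract the coefficient of $u^R$ via the vertical-line average $\bm{1}_{\abs{r}=\hat R}=\EE_{\Re(s)=\beta+\sigma(Z)}[(\hat R/\abs{r})^s]$, apply (RA1) and \eqref{INEQ:A_F,1-bound} on that line, and then close with the inequality $\hat R^{\beta+\sigma(Z)}\hat Z^{-3\beta}\le\hat R^{1/Z}\hat R^{1/2}\le q^{3}\hat R^{1/2}$ coming from $R\le 3Z$ and $\sigma(Z)\le\tfrac12+\tfrac1Z$. The only cosmetic difference is that you spell out the period integral and the tension between convergence of $\Phi^{\bm c,1}$ and the size of $q^{R\sigma_0}$, which the paper leaves implicit.
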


\begin{proof}
Simply plug in the identity $\bm{1}_{\abs{r} = \hat R} = \EE_{\Re(s)=\beta+\sigma(Z)}[(\hat R/\abs{r})^s]$,
the bound \eqref{INEQ:A_F,1-bound},
and the inequality $\hat R^{\beta+\sigma(Z)} \hat Z^{-3\beta} \le \hat R^{1/Z} \hat R^{1/2} \le q^3 \hat R^{1/2}$.
\end{proof}

In fact, the contribution from $\bar{a}_{F,1}^{\bm{a},d}(r)$ is small, in terms of the square root $\hat R^{1/2}$.

\begin{proposition}
\label{PROP:mollified-barF,1-decay}
Let  $d\in \Omon$ and $R\in \ZZ$. Then
$$\sum_{r\in \Omon}
\bm{1}_{\abs{r} = \hat R} \cdot \bar{a}_{F,1}^{\bm{a},d}(r)
\ll_\eps \abs{d}^\eps \hat R^{1/3+\eps}.
$$
\end{proposition}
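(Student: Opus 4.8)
The plan is to extract the decay from the analytic properties of the Dirichlet series $A_{F,1}^{\bm a,d}(s)$ defined in \eqref{eq:def-A1}. Recall that $\bar a_{F,1}^{\bm a,d}(r)$ is by definition the $r$th coefficient of this Euler product, so that $\sum_{r}\bar a_{F,1}^{\bm a,d}(r)|r|^{-s} = A_{F,1}^{\bm a,d}(s)$. The key input is that, by the discussion preceding \eqref{INEQ:A_F,1-bound}, the series $A_{F,1}^{\bm a,d}(s)$ converges absolutely in the half-plane $\Re(s)\ge \tfrac13+\eps$, with the bound $A_{F,1}^{\bm a,d}(s)\ll_\eps |d|^\eps$ holding there. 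First I would write the coefficient sum as a contour integral via Perron-type extraction on the function field: for $\sigma_0 = \tfrac13+\eps$,
\[
\sum_{r\in\Omon} \bm 1_{|r|=\hat R}\,\bar a_{F,1}^{\bm a,d}(r)
= \EE_{\Re(s)=\sigma_0}\big[\hat R^{s}\, A_{F,1}^{\bm a,d}(s)\big],
\]
exactly as in the proof of Proposition~\ref{PROP:RA1'E}, using the orthogonality $\bm 1_{|r|=\hat R} = \EE_{\Re(s)=\sigma_0}[(\hat R/|r|)^s]$ (here the average is over the circle $\Re(s)=\sigma_0$ in the $s$-variable, the function field analogue of the vertical line).

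Then I would bound the right-hand side trivially: on the line $\Re(s)=\tfrac13+\eps$ we have $|\hat R^s| = \hat R^{1/3+\eps}$ and $|A_{F,1}^{\bm a,d}(s)|\ll_\eps |d|^\eps$ by \eqref{INEQ:A_F,1-bound}, so the average is $\ll_\eps |d|^\eps \hat R^{1/3+\eps}$, which is exactly the claimed bound. One technical point to handle carefully is that the ``average over $\Re(s)=\sigma_0$'' in the function field setting is a genuine integral over $\ts\in\RR/\tfrac{2\pi}{\log q}\ZZ$ (since everything is periodic in $\ts$ with that period), so the $L^\infty$ bound on $A_{F,1}^{\bm a,d}$ on that circle immediately controls the average; no convergence issues arise because the circle is compact and $A_{F,1}^{\bm a,d}(s)$ is an absolutely convergent (hence continuous, indeed holomorphic) function there.

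I do not expect a serious obstacle here: the content is entirely front-loaded into the analytic continuation and the bound \eqref{INEQ:A_F,1-bound}, both of which are already established in the excerpt via the Ratios Recipe discussion and the absolute convergence of the Euler product $A_{F,1}^{\bm a,d}(s)$ for $\Re(s)\ge\tfrac13+\eps$. The only mild care needed is to make sure the Perron/orthogonality step is applied to $\bar a_{F,1}^{\bm a,d}(r)$ rather than to $a_{\bm c,1}(r)$, i.e.\ that one is extracting the $r$th coefficient of the already-averaged series $A_{F,1}^{\bm a,d}$; this is harmless since that series is literally defined as a Dirichlet series in $r$ with those coefficients. If one wanted to be even more explicit, one could instead note directly from \eqref{eq:def-A1} that $A_{F,1}^{\bm a,d}(s) = \zeta_K(2s)^{-1}L(s+\tfrac12,V)^{-1}\sum_r \bar\mu_{F,1}^{\bm a,d}(r)|r|^{-s}$ and that each factor has coefficients of controlled size, but the contour argument is cleanest and matches the style of Proposition~\ref{PROP:RA1'E}.
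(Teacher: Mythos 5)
Your proposal is correct and coincides with the paper's own argument: both express $\bm{1}_{|r|=\hat R}$ as an average over $\Re(s)=\tfrac13+\eps$, swap the sum and the expectation using the absolute convergence of $A_{F,1}^{\bm a,d}(s)$ in that half-plane, and then apply the $L^\infty$ bound \eqref{INEQ:A_F,1-bound} pointwise on the circle. Nothing further is needed.
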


\begin{proof}
On writing $\bm{1}_{\abs{r} = \hat R} = \EE_{\Re(s)=\frac13+\eps}[(\hat R/\abs{r})^s]$, we get
\begin{equation*}
\sum_{r\in \Omon}
\bm{1}_{\abs{r} = \hat R} \cdot \bar{a}_{F,1}^{\bm{a},d}(r)
= \EE_{\Re(s)=\frac13+\eps}[\hat R^s A_{F,1}^{\bm{a},d}(s)]
\ll_\eps \abs{d}^\eps \hat R^{1/3+\eps},
\end{equation*}
where the first step is justified by the absolute convergence of $A_{F,1}^{\bm{a},d}(s)$
and the second step is justified by \eqref{INEQ:A_F,1-bound}.
\end{proof}

The next step is an analogue of \cite{wang2023ratios}*{Conjecture~7.14}.

\begin{proposition}
\label{PROP:RA1'E'}
Assume (R2) and (RA1).
Let $Z,R\in \ZZ$ with $R\le 3Z$.
Let $M\in [0,R]$ and $\abs{d} \le \hat M$.
Partition the box $\{\cc\in \OK^n: \norm{\cc} \le \hat Z\}$ into sets
$$\{\cc\in \OK^n: \bm{c}\equiv \bm{a}\bmod{d},
\; \norm{\bm{c} - t^Z\bm{b}}\le \hat Z/\hat M\},$$
indexed by some set $\mscr{P}=\mscr{P}(Z,d,M)$ of pairs $(\bm{a},\bm{b})$
with $\norm{\bm{a}} < \abs{d}$ and $\norm{\bm{b}} \le 1$.
Then
$$
\sum_{(\bm{a},\bm{b})\in \mscr{P}}\,
\Biggl\lvert
\sum_{\substack{\bm{c}\in \mathcal{S}_1 \\
\bm{c}\equiv \bm{a}\bmod{d} \\
\norm{\bm{c} - t^Z\bm{b}}\le \hat Z/\hat M}}
\sum_{r\in \RcG} \bm{1}_{\abs{r} = \hat R} \cdot S^\natural_r(\cc)
\Biggr\rvert
\ll \left(f_0(M) + f_{1,M}(Z)\right)
\hat Z^6 \hat R^{1/2},
$$
for some functions $f_0(A)$ and $f_{1,M}(A)$ tending to $0$ as $A\to \infty$.
\end{proposition}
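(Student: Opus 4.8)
The plan is to reduce Proposition~\ref{PROP:RA1'E'} to the combination of the Ratios input in Proposition~\ref{PROP:RA1'E} and the decay estimate in Proposition~\ref{PROP:mollified-barF,1-decay}, while absorbing the discrepancy between the normalised exponential sums $S^\natural_r(\cc)$ and the mollifier coefficients $a_{\bm{c},1}(r)$ coming from the ``good'' modulus factor $r\in\RcG$. First I would recall, presumably from \cite{BGW2024positive}, that on the good range $r\in\RcG$ one has a factorisation of $S^\natural_r(\cc)$ into the coefficient $a_{\bm{c},1}(r)$ attached to $L(s,\cc)^{-1}$ times a Dirichlet series coming from $L(s+\tfrac12,V)\zeta_K(2s)$, so that writing $S^\natural_r(\cc) = \sum_{r_1 r_2 = r} a_{\bm{c},1}(r_1)\, b(r_2)$ with $b$ supported on cubefull-type moduli and satisfying $\sum_{|r_2|=\hat{R_2}}|b(r_2)| \ll \hat R_2^\eps$. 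Then the inner sum over $r\in\RcG$ with $|r|=\hat R$ becomes a Dirichlet convolution, and after swapping the order of summation the main term is controlled by $\sum_{|r_2|=\hat R_2}|b(r_2)|$ times the quantity bounded in Proposition~\ref{PROP:RA1'E} at level $\hat R/\hat R_2$.

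Next I would insert $\pm\bar{a}_{F,1}^{\bm{a},d}(r_1)$ inside the $r_1$-sum: the difference $a_{\bm{c},1}(r_1) - \bar{a}_{F,1}^{\bm{a},d}(r_1)$, summed over the box $\{\bm{c}\equiv\bm{a}\bmod d,\ \norm{\bm{c}-t^Z\bm{b}}\le\hat Z/\hat M\}$ and over $|r_1|=\hat{R_1}$, is estimated by Proposition~\ref{PROP:RA1'E}, giving a bound $\ll \hat M^\eps g_M(Z)\,\hat R_1^{1/2}\cdot\#\{\bm c\text{ in the box}\}$; summing this over the at most $\hat M^6$ residue/shift classes $(\bm a,\bm b)\in\mscr P$ and over the bounded number of factorisations $\hat R = \hat{R_1}\hat{R_2}$ produces the $f_{1,M}(Z)$ term with $f_{1,M}(Z) \asymp \hat M^{6+\eps} g_M(Z)$, which tends to $0$ as $Z\to\infty$ for each fixed $M$ (and one checks that the $\hat Z^6$ normalisation absorbs the count of lattice points in each sub-box). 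The complementary term, where $a_{\bm{c},1}(r_1)$ has been replaced by the mollifier coefficient $\bar{a}_{F,1}^{\bm{a},d}(r_1)$, no longer depends on $\bm c$ except through the congruence class, so the $\bm c$-sum just contributes the cardinality of the box; what remains is $\sum_{|r_1|=\hat{R_1}}\bar{a}_{F,1}^{\bm{a},d}(r_1)$, bounded by $\ll_\eps |d|^\eps \hat R_1^{1/3+\eps}$ via Proposition~\ref{PROP:mollified-barF,1-decay}. Since $\hat R_1^{1/3} \le \hat R^{1/3} \le \hat R^{1/2}\hat R^{-1/6} \le \hat R^{1/2}\hat M^{-1/6}$ using $M\le R$, this contributes a term of size $\ll \hat M^{-1/6+\eps}\hat Z^6\hat R^{1/2}$, and we set $f_0(M) \asymp \hat M^{-1/6+\eps}$, which tends to $0$ as $M\to\infty$.

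I expect the main obstacle to be bookkeeping the factorisation $S^\natural_r(\cc) = \sum_{r_1 r_2 = r} a_{\bm c,1}(r_1) b(r_2)$ on the good locus $\RcG$ precisely enough to track where the congruence $\bm c\equiv\bm a\bmod d$ and the shift by $t^Z\bm b$ interact with the Euler factors: one must be sure that the ``$L(s,\cc)^{-1}$ part'' is exactly the object whose box averages are controlled by (RA1) through $A_{F,1}^{\bm a,d}$, and that the auxiliary factor $b(r_2)$ (essentially local $L$-factors of $V$ and $\zeta_K$) is genuinely independent of $\bm c$ and cubefull-supported so that $\sum_{|r_2|\le\hat R}|b(r_2)|$ is $\hat R^\eps$. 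This is where one leans on the analogue of \cite{wang2023ratios}*{Conjecture~7.14} and the structural results of \cite{BGW2024positive}; once the factorisation is in hand, the rest is the two-term split described above together with the elementary inequalities relating $\hat R^{1/3}$, $\hat R^{1/2}$, and $\hat M$. A secondary technical point is ensuring uniformity in $\ts$ and in the choice of contour $\Re(s) = \beta+\sigma(Z)$ when invoking Proposition~\ref{PROP:RA1'E}, but this is already packaged into that proposition, so it should not cause real difficulty.
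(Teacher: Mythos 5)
The proposal rests on a structural assumption that does not hold: you take for granted a factorisation $S^\natural_r(\cc) = \sum_{r_1 r_2 = r} a_{\bm{c},1}(r_1)\, b(r_2)$ with $b$ genuinely \emph{independent} of $\bm{c}$ and cubefull-supported. You flag this yourself as the ``main obstacle'' and then proceed as if it were established. In fact the decomposition used in the paper, namely $\sum_{r\in \RcG} \bm{1}_{\abs{r} = \hat R} \cdot S^\natural_r(\cc) = \sum_{R_1+R_2+R_3=R} \prod_{1\le j\le 3} \Sigma^{\bm{c},j}(R_j)$ (from the proof of \cite{BGW2024positive}*{Conjecture~4.5}), has $\Sigma^{\bm{c},2}$ and $\Sigma^{\bm{c},3}$ depending on $\bm{c}$ in a nontrivial, non-constant way. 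This is not a bookkeeping issue but the core of the difficulty: you cannot pull the cofactor out of the $\bm{c}$-sum and apply Propositions~\ref{PROP:RA1'E} and~\ref{PROP:mollified-barF,1-decay} directly. Had the cofactor been $\bm{c}$-independent, the paper's proof would be as short as yours.

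What is actually needed, and what the paper does, is a further layer of approximation: introduce an auxiliary cutoff $M_0$ and split according to whether $R_2+R_3 \ge M_0$ or $R_2+R_3 < M_0$. The former range is made small by the decay $\hat R_2^{-9/40}\hat R_3^{-2/15}$ coming from (R2). In the latter range one invokes \cite{BGW2024positive}*{Lemma~3.4} to find a modulus $f(M_0)$ and a sparse exceptional set $\mscr{E}(M_0)$ such that, for $\bm{c}\notin\mscr{E}(M_0)$, the cofactor $\sum_{R_2+R_3=R-R_1}\prod_{j=2,3}\Sigma^{\bm{c},j}(R_j)$ depends only on $\bm{c}\bmod f(M_0)$; only then can Propositions~\ref{PROP:RA1'E} and~\ref{PROP:mollified-barF,1-decay} be applied, and with enlarged parameters $(M+\deg f(M_0),\, f(M_0)d)$ rather than $(M,d)$. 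The exceptional set $\mscr{E}(M_0)$ then has to be dealt with separately (via \cite{BGW2024positive}*{Conjecture~4.5} and H\"older), and finally $M_0$ is tied to $M$ so that all error terms are $o(1)$ as $M\to\infty$ or $Z\to\infty$. None of this appears in your proposal, so the argument as written does not go through: the two-term split via $\pm\bar a_{F,1}^{\bm a,d}(r_1)$ and the final inequalities are in the right spirit, but the reduction to that split is missing. A secondary, smaller imprecision is your claim of ``at most $\hat M^6$ residue/shift classes'' and the resulting $f_{1,M}(Z)\asymp \hat M^{6+\eps}g_M(Z)$; in fact the sub-boxes simply tile the full box, so the lattice-point counts sum back up to $\hat Z^6$ without any extra $\hat M$-power, and the $\hat M$-dependence in $f_{1,M}$ instead enters through the enlarged modulus $f(M_0)d$ fed into (RA1).
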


\begin{proof}
By the $\eps=1$ case of \cite{BGW2024positive}*{Conjecture~4.5},
the quantity to be bounded is certainly $O(\hat Z^6 \hat R^{1/2})$.
We go further, obtaining cancellation over $\bm{c}$, 
by introducing (RA1) and \cite{BGW2024positive}*{Lemma~3.4} as additional inputs.
Let $M_0\ge 0$ be an auxiliary parameter.
In the notation of the proof of \cite{BGW2024positive}*{Conjecture~4.5}, with $\beta=1$, we have
\begin{equation*}
\sum_{r\in \RcG} \bm{1}_{\abs{r} = \hat R} \cdot S^\natural_r(\cc)
= \sum_{R_1+R_2+R_3=R} \prod_{1\le j\le 3} \Sigma^{\bm{c},j}(R_j)
\end{equation*}
and $\sum_{\bm{c}\in \mcal{S}_1:\, \norm{\bm{c}}\le \hat Z} \prod_{1\le j\le 3} \abs{\Sigma^{\bm{c},j}(R_j)} \ll \hat Z^6 \hat R^{1/2} \hat R_2^{-9/40} \hat R_3^{-2/15}$, thanks to (R2).
The total contribution from $R_2+R_3 \ge M_0$ is therefore $\ll \hat M_0^{-2/15} \hat Z^6 \hat R^{1/2}$.

On the other hand, if $R-R_1 < M_0$, then by \cite{BGW2024positive}*{Lemma~3.4},
there exist a modulus $f(M_0)\in \Omon$, and an exceptional set $\mscr{E}(M_0)\belongs \OK^6$ of density $\mathfrak{d}(M_0) = o_{M_0 \to \infty}(1)$ defined by congruence conditions modulo $f(M_0)$,
such that for all $\bm{c}\notin \mscr{E}(M_0)$, the quantity $\sum_{R_2+R_3 = R-R_1} \prod_{2\le j\le 3} \Sigma^{\bm{c},j}(R_j)$ depends at most on the residue class $$\bm{c} \bmod{f(M_0)}.$$
Since $\Sigma^{\bm{c},j}(R_j) \ll \hat R_j^{1+\eps}$ trivially,
we conclude by Propositions~\ref{PROP:RA1'E} and~\ref{PROP:mollified-barF,1-decay},
applied with parameters ``$(M,d) \defeq (M + \deg{f(M_0)}, f(M_0)d)$'',
that
\begin{equation*}
\sum_{\substack{\bm{c}\in \mathcal{S}_1 \setminus \mscr{E}(M_0) \\
\bm{c}\equiv \bm{a}\bmod{d} \\
\norm{\bm{c} - t^Z\bm{b}}\le \hat Z/\hat M}}
\sum_{\substack{R_1+R_2+R_3=R \\ R_2+R_3<M_0}} \prod_{1\le j\le 3} \Sigma^{\bm{c},j}(R_j)
\ll \hat M_0^{1+\eps} T \sum_{\substack{\bm{c}\in \mathcal{S}_1 \setminus \mscr{E}(M_0) \\
\bm{c}\equiv \bm{a}\bmod{d} \\
\norm{\bm{c} - t^Z\bm{b}}\le \hat Z/\hat M}} \hat R^{1/2},
\end{equation*}
where $$T \defeq \hat M^\eps \abs{f(M_0)}^\eps g_{M + \deg{f(M_0)}}(Z)
+ \abs{f(M_0)d}^\eps \hat R^{\eps - 1/6}.$$
Here, the first term comes from the right-hand side of Proposition~\ref{PROP:RA1'E}, whereas 
the second term of $T$ comes from the right-hand side of Proposition~\ref{PROP:mollified-barF,1-decay}.

Summing over $(\bm{a},\bm{b})\in \mscr{P}$, then adding in the $R_2+R_3 \ge M_0$ contribution, we get
\begin{equation*}
\sum_{(\bm{a},\bm{b})\in \mscr{P}}\,
\Biggl\lvert
\sum_{\substack{\bm{c}\in \mathcal{S}_1\setminus \mscr{E}(M_0) \\
\bm{c}\equiv \bm{a}\bmod{d} \\
\norm{\bm{c} - t^Z\bm{b}}\le \hat Z/\hat M}}
\sum_{R_1+R_2+R_3=R} \prod_{1\le j\le 3} \Sigma^{\bm{c},j}(R_j)
\Biggr\rvert
\ll T' \hat Z^6 \hat R^{1/2},
\end{equation*}
where $T' \defeq \hat M_0^{-2/15} + \hat M_0^{1+\eps} T$.
Yet by \cite{BGW2024positive}*{Conjecture~4.5, proven under (R2)},
and H\"{o}lder's inequality in the form $1 = \frac{1-\eps}{2-\eps} + \frac{1}{2-\eps}$, we have
\begin{equation*}
\sum_{\substack{\bm{c}\in \mathcal{S}_1 \cap \mscr{E}(M_0) \\ \norm{\bm{c}}\le \hat Z}}\,
\biggl\lvert
\sum_{r\in \RcG} \bm{1}_{\abs{r} = \hat R} \cdot S^\natural_r(\cc)
\biggr\rvert
\ll \mathfrak{d}(M_0)^{(1-\eps)/(2-\eps)} (\hat Z + \abs{f(M_0)})^6 \hat R^{1/2}.
\end{equation*}

Therefore, if $T'' \defeq T' + \mathfrak{d}(M_0)^{(1-\eps)/(2-\eps)} (1 + \abs{f(M_0)}/\hat Z)^6$, then
\begin{equation*}
\sum_{(\bm{a},\bm{b})\in \mscr{P}}\,
\Biggl\lvert
\sum_{\substack{\bm{c}\in \mathcal{S}_1 \\
\bm{c}\equiv \bm{a}\bmod{d} \\
\norm{\bm{c} - t^Z\bm{b}}\le \hat Z/\hat M}}
\sum_{r\in \RcG} \bm{1}_{\abs{r} = \hat R} \cdot S^\natural_r(\cc)
\Biggr\rvert
\ll T'' \hat Z^6 \hat R^{1/2}.
\end{equation*}
Letting $M_0 \defeq \max\{A\in [0,M/12]: \abs{f(A)} \le \hat M\}$, and recalling that $M\le R$, we get
\begin{equation*}
\begin{split}
T'' &\ll \hat M_0^{-2/15}
+ \hat M^{3\eps+1/12} g_{2M}(Z)
+ \hat R^{4\eps-1/12}
+ \mathfrak{d}(M_0)^{(1-\eps)/(2-\eps)} (1 + \abs{f(M_0)}^6/\hat Z^6) \\
&= o_{M\to \infty}(1)
+ o_{M; Z\to \infty}(1)
+ o_{R\to \infty}(1)
+ o_{M\to \infty}(1)
+ o_{M; Z\to \infty}(1),
\end{split}
\end{equation*}
because $M_0 \to \infty$ whenever $M\to \infty$,
and $\mathfrak{d}(M_0) \to 0$ whenever $M_0 \to \infty$.
This suffices, since $o_{R\to \infty}(1) \le o_{M\to \infty}(1)$.
\end{proof}

By the local constancy result \eqref{EQN:local-constancy},
and our dyadic bounds in \cite{BGW2024positive}*{\S~8},
we can now prove an analogue of \cite{wang2023ratios}*{Theorem~10.7},
going beyond \cite{BGW2024positive}*{Proposition~8.1}.

\begin{theorem}\label{Thm: E1Contri}
Assume (R2) and (RA1).
Then $E_1(P) = o_{w }(\abs{P}^3)$, as $\abs{P}\to \infty$.
\end{theorem}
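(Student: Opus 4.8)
\textbf{Proof plan for Theorem~\ref{Thm: E1Contri}.}
The plan is to bound $E_1(P)$ by splitting the sum over $\bm{c}\in\mathcal{S}_1$ and $r\in\Omon$ dyadically and feeding the cancellation from Proposition~\ref{PROP:RA1'E'} into the resulting sum. Recall that
\[
E_1(P) = \abs{P}^n \sum_{\substack{r\in\Omon\\ \abs{r}\le\widehat{Q}}} \abs{r}^{-n} \sum_{\substack{\bm{c}\in\mathcal{S}_1}} S_r(\bm{c}) I_r(\bm{c}),
\]
with $I_r(\bm{c}) = J_{F,w}^\Gamma(P\bm{c}/r)$ for $\Gamma = -\deg(r)-Q$. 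First I would use Lemma~\ref{LEM:old-general-int-estimate} to see that $I_r(\bm{c}) \ll (1+\abs{P\bm{c}/r})^{-2}$, which both restricts the effective range of $\bm{c}$ (roughly $\abs{\bm{c}} \ll \abs{r}/\abs{P}$ up to a negligible tail, together with the constraint $\abs{r}\le\widehat Q$ coming from the circle method setup) and supplies the decay needed to make the dyadic sums converge. In terms of the normalised exponential sums $S_r^\natural(\bm{c}) = \abs{r}^{-7/2} S_r(\bm{c})$, the contribution to $E_1(P)$ from $\abs{r} = \hat R$ is controlled by $\abs{P}^n \hat R^{7/2 - n} \sup \abs{I} \sum_{\bm{c}} \abs{S_r^\natural(\bm{c})}$, and the point is to replace the trivial triangle-inequality bound on $\sum_{\bm{c}} S_r^\natural(\bm{c})$ by the genuinely smaller bound of Proposition~\ref{PROP:RA1'E'}.

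The key step is to exploit the local constancy \eqref{EQN:local-constancy}: since $J_{F,w}^\Gamma(\bm{w}_1) = J_{F,w}^\Gamma(\bm{w}_2)$ whenever $\abs{\bm{w}_1-\bm{w}_2}$ is small enough, the integral $I_r(\bm{c})$ depends only on $P\bm{c}/r$ modulo a small lattice, hence only on $\bm{c}$ modulo some modulus $d$ (depending on $P$ and $r$) together with a coarse localisation of $\bm{c}/r$ in $K_\infty^6$. This is exactly the combinatorial shape — partitioning the $\bm{c}$-box into sets $\{\bm{c}\equiv\bm{a}\bmod d,\ \norm{\bm{c}-t^Z\bm{b}}\le\hat Z/\hat M\}$ indexed by $\mscr{P}(Z,d,M)$ — that Proposition~\ref{PROP:RA1'E'} is designed to handle. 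So I would fix a dyadic scale $\hat R$ for $\abs{r}$ and $\hat Z$ for $\abs{\bm{c}}$, choose $M$ (say a slowly growing function of $R$ or a large constant), write $I_r(\bm{c})$ as a constant $c_{\bm{a},\bm{b}}$ on each cell of $\mscr{P}$, pull that constant out of the inner sum, and apply Proposition~\ref{PROP:RA1'E'} to obtain
\[
\sum_{\bm{c}\in\mathcal{S}_1,\ \abs{\bm{c}}\le\hat Z} \abs{S_r^\natural(\bm{c})\, I_r(\bm{c})}
\ll (f_0(M) + f_{1,M}(Z))\, \hat Z^6\, \hat R^{1/2} \sup\abs{I}.
\]
Here one must check that the number of $r$ with $\abs{r}=\hat R$ is $O(\hat R)$ and that $S_r(\bm{c}) = \abs{r}^{7/2} S_r^\natural(\bm{c})$, so that $\abs{r}^{-n} S_r(\bm{c}) = \abs{r}^{7/2-n} S_r^\natural(\bm{c})$; for the sextic form $F$ in six variables, $n=6$ and $7/2-n = -5/2$, and combined with $I \ll (\abs{P}\abs{\bm{c}}/\abs{r})^{-2}$ on the relevant range one gets a geometric decay in the dyadic parameters. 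Summing the dyadic bounds over all $R\le Q$ and $Z$ with the decay from the integral, and then letting $\abs{P}\to\infty$ (so that $M\to\infty$ and hence $f_0(M) + f_{1,M}(Z)\to 0$), yields $E_1(P) = o_w(\abs{P}^3)$.

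The main obstacle I expect is bookkeeping the interplay between the three scales $R$ (size of $r$), $Z$ (size of $\bm{c}$), and $M$ (the localisation/modulus parameter): one needs $R\le 3Z$ and $M\le R$ for Proposition~\ref{PROP:RA1'E'} to apply, and one needs the modulus $d$ arising from local constancy to satisfy $\abs{d}\le\hat M$, while simultaneously ensuring that the dyadic sum over $Z$ (and the tail where $\abs{\bm{c}}$ is large, killed by the $(1+\abs{P\bm{c}/r})^{-2}$ decay) actually converges and contributes $o(\abs{P}^3)$ rather than $O(\abs{P}^3)$. This is precisely the ``going beyond \cite{BGW2024positive}*{Proposition~8.1}'' point: the unconditional analysis only gave $O(\abs{P}^3)$, and squeezing out the extra $o(1)$ requires the honest cancellation in $\bm{c}$ from the Ratios Conjecture, uniformly as the scales grow. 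The dyadic bounds in \cite{BGW2024positive}*{\S~8} should provide the template for organising this sum; the new ingredient is substituting Proposition~\ref{PROP:RA1'E'} for the trivial bound at each scale and verifying that the resulting $f_0(M)+f_{1,M}(Z)$ factors can be summed against the geometric decay to produce a quantity tending to $0$.
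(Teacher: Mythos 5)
Your high-level idea is right: split dyadically in $\abs{r}$ and $\abs{\bm{c}}$, exploit the local constancy of $I_r(\bm{c})=J^\Gamma_{F,w}(P\bm{c}/r)$ from \eqref{EQN:local-constancy} to reduce the $\bm{c}$-dependence to a congruence class modulo some $d$ together with a box, pull out the constant, and feed in the cancellation from Proposition~\ref{PROP:RA1'E'}. This is indeed the core of the paper's argument. (A small slip: $F$ is a cubic form, not a sextic.)

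However, there is a genuine gap. When you substitute the bound from Proposition~\ref{PROP:RA1'E'} into $\Sigma$, the gain $(f_0(M)+f_{1,M}(Z))$ is offset by a \emph{loss factor} proportional to $\hat D^n$, where $\hat D=\max\{\abs{P}^{3/2}/\hat Y,\,\hat Y_2\}$ can be as large as $\abs{P}^{3/2}$. The condition $\hat M\gg\hat D$ needed for local constancy (so that $I_{t^Y}(\bm{c})$ really is constant on each cell of the partition $\mscr{P}$), together with the constraint $M\le R$, forces the modulus $d$ and the cell size to grow with $D$; the resulting bound is
\[
\Sigma(Y_1,Y_2)\ll \abs{P}^3\,\hat D^n\,(f_0(M)+f_{1,M}(Z)),
\]
which is \emph{worse} than $\abs{P}^3$ unless $D$ is bounded. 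So ``substituting Proposition~\ref{PROP:RA1'E'} for the trivial bound at each scale'' cannot work uniformly; it is only useful for small $D$. The paper handles the complementary range $D\ge D_0$ by first re-deriving the unconditional bound $\Sigma(Y_1,Y_2)\ll\abs{P}^3/\hat D^\omega$ from (R2) and the integral estimates Lemmas~\ref{LEM:old-general-int-estimate} \emph{and} \ref{LEM:new-vanishing-for-small-dual-form} (the second lemma, giving vanishing of $J^\Gamma_{F,w}$ unless $\abs{F^\ast(\bm{w})}$ is small, is an essential input that your sketch omits). It then splits the dyadic sum into $D\ge D_0$ (geometric decay from $\hat D^{-\omega}$) and $D\le D_0$ (where $\hat D^n$ is bounded and (RA1) gives genuine savings), finally taking $M\gg_{D_0}1$ and then $D_0\to\infty$. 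Without this two-regime argument your plan only recovers $O(\abs{P}^3)$, not $o(\abs{P}^3)$.
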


\begin{proof}
The idea is to recycle our work from \cite{BGW2024positive}*{\S~8} as much as possible,
and only then to use \eqref{EQN:local-constancy} and Proposition~\ref{PROP:RA1'E'} in the remaining ranges.

First, we claim that for $0\le Y_1+Y_2=Y\le Q$, we have
\begin{equation}
\label{old-E1-bound}
\Sigma(Y_1,Y_2) \defeq
\abs{P}^n \sum_{\norm{\bm{c}}\ll \abs{P}^{1/2}} I_{t^Y}(\bm{c}) \hat Y^{(1-n)/2}
\sum_{\substack{r_1\in \RcG\\ \abs{r_1}=\hat Y_1}} S^\natural_{r_1}(\bm{c})
\sum_{\substack{r_2\in \RcB\\ \abs{r_2}=\hat Y_2}} S^\natural_{r_2}(\bm{c})
\ll \frac{\abs{P}^3}{\hat D^\omega}
\end{equation}
for some small constant $\omega>0$, where
\begin{equation}
\hat D \defeq \max\left\{\frac{\abs{P}^{3/2}}{\hat Y},
\hat Y_2\right\}
\ll \abs{P}^{3/2}.
\end{equation}
Indeed,
given the integral estimates Lemmas~\ref{LEM:old-general-int-estimate} and~\ref{LEM:new-vanishing-for-small-dual-form} (replacing \cite{BGW2024positive}*{Lemmas~5.5 and~5.8}),
our work in \cite{BGW2024positive}*{\S~8} leads to the following bounds.
\begin{enumerate}
\item By \cite{BGW2024positive}*{\S~8.1},
the contribution to $\Sigma(Y_1,Y_2)$ from $\norm{\bm{c}}\ll \abs{P}^{1/2-\delta}$ is
$$\ll \abs{P}^{3n/4-3/2+\eps-\delta(1+n/2+\eps)/2}
= \abs{P}^{3+\eps-\delta(4+\eps)/2}
\ll \abs{P}^3/\hat D^\omega,$$
unconditionally, provided $\eps,\omega \ll \delta$.

\item In the notation of \cite{BGW2024positive}*{\S~8.2},
if $\hat Y_2^{n+\eps} \le \hat W^{\alpha/2}$, then
the contribution to $\Sigma(Y_1,Y_2)$ from $\norm{\bm{c}}\gg \abs{P}^{1/2-\delta}$ is,
by \cite{BGW2024positive}*{final display before \emph{The case $\hat Y_2^{n+\eps} > \hat W^{\alpha/2}$}},
$$\ll \abs{P}^{3n/4-3/2}
(\hat Y_2^{-n/2} (\hat Y/\abs{P}^{3/2})^{\alpha/2}
+ \hat Y_2^{-n/2} \abs{P}^{-\alpha(1+\eps)/4})
\ll \abs{P}^3/\hat D^\omega,$$
under (R2), provided $\omega\ll \alpha$.

\item In the notation of \cite{BGW2024positive}*{\S~8.2},
if $\hat Y_2^{n+\eps} > \hat W^{\alpha/2}$, then
the contribution to $\Sigma(Y_1,Y_2)$ from $\norm{\bm{c}}\gg \abs{P}^{1/2-\delta}$ is,
by \cite{BGW2024positive}*{antepenultimate and penultimate displays of \S~8},
$$\ll \abs{P}^{3n/4-3/2} \hat Y_2^{-\eta'/2}
((\hat Y/\abs{P}^{3/2})^\beta
+ \abs{P}^{\beta(\eps-1)/2})
\ll \abs{P}^3/\hat D^\omega,$$
under (R2), provided $\omega\ll \min\{\eta',\beta\}$.
Here $\beta \defeq \eta'\alpha / (4(n+\eps))$.
\end{enumerate}
These three cases complete the proof of  \eqref{old-E1-bound}.

On the other hand, we may bound $\Sigma(Y_1,Y_2)$
by first fixing $r_2$, then fixing $I_{t^Y}(\bm{c})$ using \eqref{EQN:local-constancy},
and finally summing over $\bm{c}$ and $r_1$ using Proposition~\ref{PROP:RA1'E'}, with $\hat Z \asymp \abs{P}^{1/2}$.
This gives
\begin{equation*}
\Sigma(Y_1,Y_2)
\ll \abs{P}^{n-3} \left(1 + \frac{\abs{P}\hat C}{\hat Y}\right)^{1-n/2}
\hat Y^{(1-n)/2} \hat Y_2^{1 + (1+n)/2}
\left(f_0(M) + f_{1,M}(Z)\right)
\hat Z^6 \hat Y_1^{1/2}
\end{equation*}
provided $M\in [0,Y_1]$ and $\hat M \gg \hat D$ with a sufficiently large implied constant.
The conditions on $\hat M$ ensure, in particular, that $I_{t^Y}(\bm{c})$ is constant on the box $\norm{\bm{c} - t^Z\bm{b}}\le \hat Z/\hat M$,
for any given $\bm{b}$ with $\norm{\bm{b}}\le 1$.

Since $\hat Y_1\hat Y_2 = \hat Y \ge \abs{P}^{3/2}/\hat D$ and $\hat Y_2 \le \hat D$, the last display is
\begin{equation*}
\ll \abs{P}^{n-3} (\abs{P}^{3/2}/\hat D)^{1-n/2} \hat D^{(2+n)/2}
\left(f_0(M) + f_{1,M}(Z)\right)
\abs{P}^3,
\end{equation*}
which simplifies to $\abs{P}^3 \hat D^n \left(f_0(M) + f_{1,M}(Z)\right)$.
Let $D_0$ be a parameter with $\hat M\gg \hat D_0$.
Then
\begin{equation*}
\begin{split}
\sum_{Y_1,Y_2} \Sigma(Y_1,Y_2)
&\ll \sum_{\substack{Y_1,Y_2 \\ D\ge D_0}} \frac{\abs{P}^3}{\hat D^\omega}
+ \sum_{\substack{Y_1,Y_2 \\ D\le D_0}} \abs{P}^3 \hat D^n
\left(f_0(M) + f_{1,M}(Z)\right) \\
&\ll \frac{\abs{P}^3}{\hat D_0^{\omega/2}}
+ \hat D_0^\eps \abs{P}^3 \hat D_0^n
\left(f_0(M) + f_{1,M}(Z)\right).
\end{split}
\end{equation*}
We conclude that $\abs{E_1(P)} \le (2\hat D_0^{-\omega/2} + \hat D_0^{n+\eps} f_0(M)) \abs{P}^3$ for all $\abs{P}\gg_{D_0,M} 1$.
Taking $M\gg_{D_0} 1$, we then have $\abs{E_1(P)} \le 3\hat D_0^{-\omega/2} \abs{P}^3$ for all $\abs{P}\gg_{D_0} 1$.
Taking $D_0\to \infty$, we are finally done. 
\end{proof}

\section{Centre and dual variety}\label{Sec: CentreDual}

We refine \cite{BGW2024positive}*{Proposition~9.1} to an asymptotic for $M(P)$.
It will be convenient to make the explicit choice $Q=\lfloor -2\tilde{A} +3d/2\rfloor$. 
\begin{proposition}\label{Prop: Centercontri}
If $w=w_{A,\alpha}$,
then \[
M(P) = \sigma_\infty \Sing |P|^3 + O(\sigma_\infty |P|^3 \widehat{Q}^{-2/3+\varepsilon}),
\]
where $\Sing$ and $\sigma_\infty$ are the singular series and singular integral defined in \eqref{Eq: SingSer} and \eqref{Eq: singInt} respectively.
\end{proposition}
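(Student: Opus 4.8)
The plan is to recognise $M(P)$ as the $\bm{c}=\bm{0}$ term of the circle-method expansion of $N_w(P)$ recalled in \S~\ref{SEC:new-ratios-stuff}, i.e.\ a normalising power of $\abs{P}$ times $\sum_{r\in\Omon,\ \abs{r}\le\widehat{Q}}\abs{r}^{-6}S_r(\bm{0})I_r(\bm{0})$ with $I_r(\bm{0})=J^{\Gamma}_{F,w}(\bm{0})$ and $\Gamma=-\deg r-Q$, and then to split the argument into two steps: (i) evaluate the archimedean factor $I_r(\bm{0})$, showing it is independent of $r$ for all $\abs{r}\le\widehat{Q}$ and equals, up to the normalising power of $\abs{P}$, the singular integral $\sigma_\infty$ of \eqref{Eq: singInt}, so that
\[
M(P)=\sigma_\infty\abs{P}^3\sum_{\substack{r\in\Omon\\ \abs{r}\le\widehat{Q}}}\abs{r}^{-6}S_r(\bm{0});
\]
and (ii) replace this truncated singular series by the full series $\Sing$ of \eqref{Eq: SingSer}, controlling the tail.

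For step (i), I would follow \cite{BGW2024positive}*{proof of Proposition~9.1}, now run with $w=w_{A,\alpha}$. After rescaling the frequency variable by $\abs{P}^3$, the factor $I_r(\bm{0})$ is the integral of $J_{F,w}(\gamma,\bm{0})$ over a range of length $\asymp\abs{P}^3/(\abs{r}\widehat{Q})$, and the choices $P=t^d$ and $Q=\lfloor-2\tilde{A}+3d/2\rfloor$ make this length at least $\abs{P}^3/\widehat{Q}^2\ge q^{4\tilde{A}}$ throughout $\abs{r}\le\widehat{Q}$. On the other hand $J_{F,w}(\cdot,\bm{0})$ is supported in $\{\abs{\gamma}\le q^{2\tilde{A}}\}$ — which is precisely why the cutoff $q^{4\tilde{A}}$ suffices in the definition \eqref{Eq: singInt} — by a routine stationary-phase/coordinate-wise estimate using the box decomposition \eqref{Eq: DecompWeight}, the fact that $F$ is diagonal, and \cite{BGW2024positive}*{Lemma~5.5} applied through \eqref{box-integrals} and Lemma~\ref{LEM:w_A-is-nice}, the point being that $\nabla F=3(x_1^2,\dots,x_6^2)$ is nowhere zero on $\supp(w)$ and has entries of absolute value $\le q^{2\tilde{A}}$ there. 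Hence the truncation defining $I_r(\bm{0})$ is inert, and $I_r(\bm{0})$ recovers $\int_{\abs{\theta}\le q^{4\tilde{A}}}J_{F,w}(\theta,\bm{0})\,\dd\theta=\sigma_\infty$, uniformly in $r$.

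Granting (i), what remains is to bound $\sum_{r\in\Omon,\ \abs{r}>\widehat{Q}}\abs{r}^{-6}\abs{S_r(\bm{0})}$, since then $\sum_{\abs{r}\le\widehat{Q}}\abs{r}^{-6}S_r(\bm{0})=\Sing+O(\widehat{Q}^{-2/3+\varepsilon})$. But, writing $S_r^\natural(\bm{0})=\abs{r}^{-7/2}S_r(\bm{0})$, this is exactly the tail bounded in the proof of Lemma~\ref{Le: ApproxLocDensSingSer}: it is $O(\widehat{Q}^{-2/3+\varepsilon})$ by \cite{BGW2024positive}*{Lemma~9.2}. Multiplying through by $\sigma_\infty\abs{P}^3$ then yields $M(P)=\sigma_\infty\Sing\abs{P}^3+O(\sigma_\infty\abs{P}^3\widehat{Q}^{-2/3+\varepsilon})$, as claimed.

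I expect step (i) to be where the only genuine work lies: one must verify that the Kloosterman-type choice $Q=\lfloor-2\tilde{A}+3d/2\rfloor$ makes the $\gamma$-truncation wide enough to recover the complete singular integral \emph{uniformly} over $\abs{r}\le\widehat{Q}$, and for the more delicate weight $w_{A,\alpha}$ this has to be threaded through the box decompositions \eqref{Eq: DecompWeight}, \eqref{box-integrals} and Lemma~\ref{LEM:w_A-is-nice} rather than carried out directly. Step (ii) is essentially quoted from \cite{BGW2024positive}, and the factor $\sigma_\infty$ in the error term is simply $I_r(\bm{0})$ pulled out of the sum over $r$.
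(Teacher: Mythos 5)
Your proposal follows the paper's proof essentially step for step: recognise $M(P)$ as the $\bm{c}=\bm{0}$ contribution, decompose $\nu$ via \eqref{Eq: DecompWeight}, use stationary phase on each box to show the $\theta$-truncation in $I_r(\bm{0})$ is inert for the choice $Q=\lfloor-2\tilde{A}+3d/2\rfloor$, conclude $I_r(\bm{0})=\sigma_\infty\abs{P}^{-3}$ uniformly over $\abs{r}\leq\widehat{Q}$, and bound the tail of the singular series by \cite{BGW2024positive}*{Lemma~9.2}.

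One local correction to step (i): you assert that $J_{F,w}(\cdot,\bm{0})$ (in the rescaled variable $\gamma=\theta P^3$) is supported in $\{\abs{\gamma}\leq q^{2\tilde{A}}\}$, attributing this to the upper bound $\abs{\nabla F}\leq q^{2\tilde{A}}$ on $\supp(w)$. The bound that is actually used is the \emph{lower} bound $\abs{\nabla F_0(\bm{y}+t^{\alpha-2\tilde{A}}\bm{z})}\geq 1$, coming from $1\leq\abs{y_1}$ in \eqref{eq:RA}: the nonemptiness of the stationary-phase region $\Omega$ forces $q^{-2\tilde{A}}\abs{\gamma}\cdot\abs{\nabla F_0}\leq\max\{1,\abs{\gamma}^{1/2}\}$, and plugging in $\abs{\nabla F_0}\geq 1$ gives $\abs{\gamma}\leq q^{4\tilde{A}}$, not $q^{2\tilde{A}}$. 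This is precisely why the cutoff in \eqref{Eq: singInt} is taken to be $q^{4\tilde{A}}$. Since your truncation estimate $\abs{P}^3/\widehat{Q}^2\geq q^{4\tilde{A}}$ is already at the $q^{4\tilde{A}}$ level, the argument you sketch is unaffected, but the claimed support radius and its justification should be adjusted accordingly.
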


\begin{proof}
Recall that 
\[
M(P)=|P|^6\sum_{\substack{r\in \Omon \\ |r|\leq \hat{Q}}}|r|^{-6}I_r(\bm{0})S_r(\bm{0}).
\]
Inserting \eqref{Eq: DecompWeight} into the definition of the weight function $w$ and making the change of variables $\bm{x}=\bm{y}+t^{\alpha - 2\tilde{A}}\bm{z}$ with $\bm{y}\in R_{A,\alpha}$, it follows that 
\[
I_r(\bm{0})=\int_{|\theta|<|r|^{-1}\widehat{Q}^{-1}}\left(q^{3(\alpha -2\tilde{A})}\sum_{\bm{y}\in R_{A,\alpha}}\int_{\TT^3}\psi(\theta P^3 F_0(\bm{y}+t^{\alpha -2\tilde{A}}\bm{z}))\dd\bm{z}\right)^2\dd\theta. 
\]
By \cite[Lemma 5.2]{BGW2024positive} we may replace $\TT$ in the integral by 
\[
\Omega=\{\bm{z}\in\TT^3\colon |t^{\alpha -2\tilde{A}}\theta P^3\nabla F_0(\bm{y}+t^{\alpha -2 \tilde{A}}\bm{z})|\leq q^\alpha \max\{1, |\theta P^3|^{1/2}\}\},
\]
where we used the fact that $H_G\leq q^\alpha$ for $G(\bm{z})=F(\bm{y}+t^{\alpha -2\tilde{A}}\bm{z})$. As $\bm{y}\in R_{A,\alpha}$, in the notation of 
\eqref{eq:RA},
we have $1\leq |y_1|$, so that 
\[|\nabla F_0(\bm{y}+t^{\alpha-2\tilde A}\bm{z})|\geq |(y_1+t^{\alpha -2\tilde A}z_1)^2|\geq 1.
\]
In particular, if $\Omega \neq \emptyset$, then we must have 
\[
|\theta|\leq q^{2\tilde{A} }|P|^{-3}\max\{1,|\theta P^3|^{1/2}\}, 
\]
which is only possible if $|\theta|\leq q^{4\tilde{A}}|P|^{-3}$. As $|r|\leq \widehat{Q}$, we always have $|r|^{-1}\widehat{Q}^{-1}\geq \widehat{Q}^{-2}\geq q^{4\tilde{A}}|P|^{-3}$. Therefore, 
\begin{align*}
I_r(\bm{0})&=\int_{|\theta|\leq q^{4\tilde{A}}|P|^{-3}}\int_{K_\infty^6}w(\bm{x})\psi(\theta P^3 F(\bm{x}))\dd\bm{x}\dd\theta \\
& = |P|^{-3}\int_{|\theta|\leq q^{4\tilde{A}}}\int_{K_\infty^6}w(\bm{x})\psi(\theta F(\bm{x}))\dd\bm{x}\dd\theta\\
&=\sigma_\infty |P|^{-3},
\end{align*}
on recalling the definition~\eqref{Eq: singInt}.
Turning to the singular series defined in~\eqref{Eq: SingSer}, we have
\begin{align*}
\Biggl |\sum_{\substack{r\in \Omon \\ |r|\leq \widehat{Q}}}|r|^{-6}S_r(\bm{0}) - \Sing \Biggr | & \leq \sum_{\substack{r\in \Omon \\ |r|>\widehat{Q}}}|r|^{-6}|S_r(\bm{0})|\\
& \ll \widehat{Q}^{-2/3+\varepsilon},
\end{align*}
by  \cite{BGW2024positive}*{Lemma~9.2}.
\end{proof}

We proceed by proving an analogue of \cite{BGW2024positive}*{Proposition~10.1}, for our weight function. 

\begin{proposition}\label{Prop: LinSpacecontri}
If $w=w_{A,\alpha}$ and $F=x_1^3+\dots+x_6^3$, then
\begin{equation*}
E_2(P) = \sum_{L\in \Upsilon} \sum_{\bm{x}\in L\cap \OK^6} w(\bm{x}/P)
+ O_{A,\eps}(\abs{P}^{3-1/4+\eps}).
\end{equation*}
\end{proposition}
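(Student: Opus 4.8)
The plan is to adapt the argument of \cite{BGW2024positive}*{Proposition~10.1} to the skew weight $w=w_{A,\alpha}$, tracking the dependence on $A$ and $\alpha$ throughout. Recall that $E_2(P)$ is the contribution to $N_w(P)$ from those $\bm{c}\in\OK^6$ with $F^\ast(\bm{c})=0$. The first step is to understand the geometry of the dual variety: for the Fermat cubic $F=x_1^3+\cdots+x_6^3$, the condition $F^\ast(\bm{c})=0$ cuts out a variety whose $\OK$-points of bounded height are governed, up to a negligible error, by the vectors $\bm{c}$ that are ``supported'' on the coordinate structure forcing $V_{\bm{c}}$ to be singular along a linear space. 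Concretely, I would recall from \cite{BGW2024positive} (or re-derive) that the main term in $E_2(P)$ comes from $\bm{c}$ for which the hyperplane section $V\cap\{\bm{c}\cdot\bm{x}=0\}$ contains one of the $3$-dimensional linear subspaces $L\in\Upsilon$, and that after the circle-method identity $N_w(P)=|P|^n\sum_r|r|^{-n}\sum_{\bm c}S_r(\bm c)I_r(\bm c)$, the sum over such $\bm{c}$ and over $r$ collapses—via orthogonality/completion of the $S_r(\bm c)$ and the integral evaluation $I_r(\bm c)=J^\Gamma_{F,w}(P\bm c/r)$—to precisely $\sum_{L\in\Upsilon}\sum_{\bm{x}\in L\cap\OK^6}w(\bm{x}/P)$.

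The second step is to bound the remaining contribution to $E_2(P)$, i.e.\ the $\bm c$ with $F^\ast(\bm c)=0$ that do not contribute to the main term, by $O_{A,\eps}(|P|^{3-1/4+\eps})$. Here I would invoke Lemma~\ref{LEM:new-vanishing-for-small-dual-form}: since $J^\Gamma_{F,w}(\bm w)=0$ unless $|F^\ast(\bm w)|\ll 1+\|\bm w\|^{\deg F^\ast-1}$, and Lemma~\ref{LEM:old-general-int-estimate} gives $J^\Gamma_{F,w}(\bm w)\ll(1+|\bm w|)^{-2}$ together with the support constraint $|\gamma|\asymp|\bm w|$, the relevant $\bm c$ are confined to $\|\bm c\|\ll|P|^{1/2}$ (from $Q=\lfloor-2\tilde A+3d/2\rfloor$) lying on or near the dual variety $F^\ast(\bm c)=0$. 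Counting lattice points of bounded height on the dual hypersurface—which has dimension $5$ in $\PP^5$ but whose relevant locus, once the linear-space contributions are excised, has smaller dimension—combined with the square-root-type cancellation in $S_r(\bm c)$ from \cite{BGW2024positive}*{\S~8} (encoded in bounds on $S_r^\natural(\bm c)$), yields the power saving $|P|^{3-1/4+\eps}$. The skew weight only affects this through the shape of $\supp(w)$, and since $w$ still satisfies the scale-invariance and local-constancy properties of Lemmas~\ref{LEM:integral-scale-invariance}--\ref{LEM:w_A-is-nice}, all the integral estimates from \cite{BGW2024positive}*{\S~5,\S~8,\S~10} carry over with implied constants depending on $A,\alpha$.

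The third step is bookkeeping: one must check that the decomposition $\eqref{Eq: DecompWeight}$ of $\nu$ (hence $w$) into small boxes $\omega_{\bm y}$, already used in \eqref{box-integrals} and in the proof of Proposition~\ref{Prop: Centercontri}, is compatible with the dual-variety analysis—i.e.\ the linear subspaces $L\in\Upsilon$ and the count $\sum_{\bm x\in L\cap\OK^6}w(\bm x/P)$ are insensitive to this decomposition because membership in $L$ is a linear condition and $w$ is a finite sum of indicator functions of translated boxes. One also needs the normalization $S_r^\natural(\bm 0)=|r|^{-7/2}S_r(\bm 0)$ and the dyadic bounds of \cite{BGW2024positive}*{\S~8} to control the off-diagonal $r$, exactly as in the proof of Theorem~\ref{Thm: E1Contri}, but now \emph{without} needing the Ratios Conjecture, since on the locus $F^\ast(\bm c)=0$ the family $\{V_{\bm c}\}$ degenerates and one has unconditional cohomological bounds (this is the point of separating $E_1$ from $E_2$).

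The main obstacle I expect is the \textbf{extraction of the exact main term} $\sum_{L\in\Upsilon}\sum_{\bm x\in L\cap\OK^6}w(\bm x/P)$: one must show that the circle-method expression for the $F^\ast(\bm c)=0$ contribution, after summing a divergent-looking series over $r$ and $\bm c$, reassembles \emph{precisely} into this lattice-point count over the linear spaces, with error $O(|P|^{3-1/4+\eps})$ and no spurious lower-order main terms. In \cite{BGW2024positive} this was done for a different weight; the subtlety with $w_{A,\alpha}$ is that the support is a union of skew boxes of different sizes (the $z$-variable ranges in $|y|q^{-1}\le|z|\le|y|$), so the Poisson-summation/completion steps must be performed box-by-box and then reassembled, and one must verify that the cross terms between different boxes $\omega_{\bm u},\omega_{\bm v}$ in $w=\sum\nu\cdot\nu$ do not produce additional main-term contributions beyond those on $\Upsilon$. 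Once one checks (as in the remark after Definition~\ref{DEFN:A-skew-weights}, using $\Char(\FF_q)>3$ so that no constant-field solutions to $x_0^3+y_0^3+z_0^3=0$ interfere and Lemma~\ref{Le: NumberPtsLinSpace} to control the size of each $L$-contribution) that everything is accounted for, the proposition follows.
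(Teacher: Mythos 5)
Your overall strategy---adapting \cite{BGW2024positive}*{Proposition~10.1} to $w=w_{A,\alpha}$---is the same as the paper's, but the paper's actual proof is much shorter and hinges on a specific observation you do not identify: the argument of \cite{BGW2024positive}*{\S~10} works for \emph{any} $w\in S(K_\infty^6)$ with $\bm{0}\notin\supp(w)$, because the one genuinely weight-sensitive input, the integral estimate \cite{BGW2024positive}*{Eq.~(10.13)}, holds for all such $w$ by the arguments of \cite{glas2022question}*{\S~3}, while every other ingredient of \cite{BGW2024positive}*{\S~10} is valid for arbitrary $w\in S(K_\infty^6)$. The skew shape of $w_{A,\alpha}$ is irrelevant to $E_2$ once you note $\bm{0}\notin\supp(w_{A,\alpha})$, so the box-by-box Poisson summation and cross-term worries you raise never come up.

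There are also two concrete missteps. First, invoking Lemma~\ref{LEM:new-vanishing-for-small-dual-form} to constrain the $\bm{c}$ entering $E_2$ is misplaced: by definition $E_2$ is the contribution from $\bm{c}$ with $F^\ast(\bm{c})=0$, so the vanishing criterion $\abs{F^\ast(\bm{w})}\ll 1+\norm{\bm{w}}^{\deg F^\ast-1}$ is automatically satisfied and gives no restriction. That lemma is the key to handling $E_1$, not $E_2$. Second, you correctly flag ``extraction of the exact main term'' $\sum_{L\in\Upsilon}\sum_{\bm{x}\in L\cap\OK^6}w(\bm{x}/P)$ as the main obstacle, but then leave it unresolved; the paper disposes of it precisely via the $\bm{0}\notin\supp(w)$ observation above, so your proposal as written has a genuine gap at the crux of the argument.
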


\begin{proof}
Every step of the proof of \cite{BGW2024positive}*{Proposition~10.1} directly generalises to the weight function $w=w_{A,\alpha}$, the details of which will not be repeated here.
(In fact the proof  works for any $w\in S(K_\infty^6)$ with $\bm{0}\notin \supp(w)$.
Indeed, the integral estimate \cite{BGW2024positive}*{Eq.~(10.13)} holds for all such $w$, by the arguments of \cite{glas2022question}*{\S~3},
whereas all other ingredients in \cite{BGW2024positive}*{\S~10} are valid for arbitrary $w\in S(K_\infty^6)$.)
\end{proof}
We now have all the ingredients at hand to complete our proof of Theorem~\ref{Thm: Manin}.
\begin{proof}[Proof of Theorem~\ref{Thm: Manin}]
On recalling the decomposition of $N_w(P)$ in \eqref{eq:stein}, the proof is an immediate consequence of Theorem~\ref{Thm: E1Contri} and Propositions~\ref{Prop: Centercontri} and \ref{Prop: LinSpacecontri}.
\end{proof}

\end{document}